\documentclass[11pt,reqno]{amsart}
\usepackage[dvipsnames]{xcolor}
\usepackage{etoolbox,upgreek}
\usepackage{mathrsfs}
\usepackage{amssymb,amsmath,amsthm,color}
\usepackage{graphicx, cite}
\usepackage{url}
\usepackage{enumerate}
\allowdisplaybreaks

\usepackage{mathtools}
\usepackage{url} 
\usepackage{mathtools}
\usepackage[margin=1in]{geometry}
\usepackage{comment}
\usepackage{mathabx}
\usepackage[T1]{fontenc}

\usepackage{footnote}
\usepackage{cite}
\usepackage{amscd}
\usepackage{color}
\usepackage{euscript}
\usepackage{calc}                   %Detta beh?vs f?r att
                    %%fixa%

\definecolor{mypink1}{rgb}{0.858, 0.188, 0.478}
\definecolor{mypink2}{RGB}{219, 48, 122}
\definecolor{mypink3}{cmyk}{0, 0.7808, 0.4429, 0.1412}
\definecolor{mygray}{gray}{0.6}

\usepackage[colorlinks, linkcolor=purple, citecolor=LimeGreen, urlcolor=blue, hypertexnames=false]{hyperref}

\usepackage{tikz}
\usepackage{graphicx} % Add the graphicx package for figures

\newcommand{\loc}{{\rm loc}}

\newtheorem{theorem}{Theorem}[section]
\newtheorem{lemma}[theorem]{Lemma}
\newtheorem{proof of lemma}[theorem]{Proof of Lemma}
\newtheorem{proposition}[theorem]{Proposition}

\theoremstyle{definition}
\newtheorem{definition}[theorem]{Definition}
\newtheorem{assumption}[theorem]{Assumption}
\newtheorem{example}[theorem]{Example}

\newtheorem{remark}[theorem]{Remark}

\numberwithin{equation}{section}

%    Absolute value notation
\newcommand{\abs}[1]{\lvert#1\rvert}
\newcommand{\Rn}{\mathbb{R}^n}
\newcommand{\dist}{\mathrm{d}}
\newcommand{\cX}{\mathcal{X}}
\newcommand\norm[1]{\left\lVert#1\right\rVert}

%    Blank box placeholder for figures (to avoid requiring any
%    particular graphics capabilities for printing this document).

%
\usepackage{color}

\begin{document}
\title[pointwise convergence to the initial data of heat equations]{A unified framework for  pointwise convergence to the initial data of heat equations in metric measure spaces}

\author{Divyang G. Bhimani, Anup Biswas and Rupak K. Dalai}

\address{Department of Mathematics, Indian Institute of Science Education and Research-Pune, Homi Bhabha Road, Pune 411008, India}

\email{divyang.bhimani@iiserpune.ac.in, anup@iiserpune.ac.in, rupakinmath@gmail.com}

\subjclass[2020]{Primary 42B99, 35K05; Secondary 35G10}

%\date{\today}

\keywords{Pointwise convergence, heat semigroup,  maximal functions, Lévy–Khintchine exponent, Hardy potential, Dunkl operator}

\begin{abstract}
Given a metric measure space $(\cX, d, \mu)$ satisfying the volume doubling condition, we consider a semigroup $\{S_t\}$ and the 
associated heat operator.
We propose general conditions on the heat kernel
so that the solutions of the associated heat equations attain the initial data pointwise. We demonstrate that these conditions are satisfied by a broad class of operators, including the Laplace operators perturbed by a gradient, fractional Laplacian, mixed local-nonlocal operators, Laplacian on Riemannian manifolds, Dunkl Laplacian and many more. In addition, we consider the Laplace operator in $\Rn$ with the Hardy potential and establish a characterization for the pointwise convergence to the initial data. We also prove similar results for the nonhomogeneous equations and showcase an application for the power-type nonlinearities.
\end{abstract}

\maketitle

\tableofcontents

\section{Introduction}
Let \((\cX, d, \mu)\) be a metric measure space with the volume doubling property. Given a metric measure space $(\cX, d, \mu),$
a family \(\{\varphi_t\}_{t > 0}\) of non-negative measurable functions $\varphi_t(x,y)$ on $\cX \times \cX$ is called a \textit{\textbf{heat kernel}} or a \textit{\textbf{transition density}} if the following conditions are satisfied, for all $x, y \in \cX$ and \(s, t > 0\):
\begin{enumerate}
    %\item  Symmetry:  $\varphi_t(x, y) = \varphi_t(y, x).$
    \item Conservative (stochastic completeness):  $\int_\cX \varphi_t(x, y) \, d\mu(y) = 1.$
\item Semigroup:  $ \varphi_{s+t}(x, y) = \int_\cX\varphi_s(x, z) \varphi_t(z, y) \, d\mu(z).$
\item  Identity approximation: 
   For any \(f \in L^2(\cX)\),  
   \[
   \int_\cX \varphi_t(x, y) f(y) \, d\mu(y) \longrightarrow f(x) \quad \text{in} \  L^2(\cX) \text{ as } t \to 0.
   \]
\end{enumerate}
We say the heat kernel is symmetric if $\varphi_t(x, y)=\varphi_t(y, x)$. 
It is well-known that any symmetric heat kernel  $\{\varphi_t \}$  gives rise to the (heat) semigroup \(\{S_t\}_{t>0}\) where $S_t$ is an operator on $L^2(\cX)$ defined by
\begin{equation*}\label{DB1}
  S_tf(x)=\int_{\cX} \varphi_t (x, y) f(y) \, d\mu(y).  
\end{equation*}
In this article, given a heat  semigroup $\{S_t\}_{t>0}$ and its infinitesimal generator\footnote{It is natural to refer to $\mathcal{L}$ as the \textit{Laplace operator} of the heat kernel $\{\varphi_t\}$.}  $\mathcal{L}$ (see Section \ref{pre}), we consider the abstract Cauchy problem of the following form
\begin{equation}\label{AB1}
\begin{cases}
   \partial_t u(x,t) + \mathcal{L} u(x,t)  =0
   \\ 
   u(x, 0) = f(x)
\end{cases}, \quad   (x, t) \in \cX\times \mathbb R_+.
\end{equation}
Formally, the solution to \eqref{AB1} is given by \[u(x, t)=S_tf(x)= e^{-t \mathcal{L}}f(x)\]
which also known as the mild solution or semigroup solution to \eqref{AB1}. 

\begin{comment}
Throughout this article, we assume that $\{S_t\}$ attains a heat kernel
in some time interval $(0, T)$, that is, there exists $\varphi: (0, T)\times \cX\times\cX\to (0, \infty)$ satisfying 
$$ S_tf(x)=\int_{\cX} \varphi_t (x, y) f(y) \, d\mu(y).$$
We shall assume that $[\varphi(x, \cdot)]^{-1}$ is bounded on compact sets for every $x\in\cX$. 
We also refer $\varphi$ as the heat kernel associated to $\mathcal{L}$. Many times $\varphi$ is also referred to as the fundamental solution associated to the partial differential equation \eqref{AB1}. 
\end{comment}

It is natural to question under what conditions the above solution converges pointwise to the given initial data.
In fact, in this note, our primary objective is to develop a unified framework for understanding the pointwise convergence behavior of $S_tf$ as time $t\to 0$ when $f$ belongs to  a suitable  weighted Lebesgue spaces.  Specifically, we aim to {\it characterize} the weight class $D_p$ (to be defined below) so that
the  following limit
$$ \lim_{t\to 0}u(x, t)= f(x)$$
holds for almost all $x,$ for  every $f\in L^p_v(\cX)$ and $v\in D_p$. 

In order to state our first main result, we briefly set the notations. Let $v:\cX \to (0, \infty)$ be a positive weight function. In this article, we consider only those weights 
for which $v\, d\mu$ is a Radon measure.
The weighted Lebesgue space norm is given by 
\[\|f\|_{L^p_v(\cX)} = \left(\int_{\cX} |f(x)|^p v(x) \,d\mu (x) \right)^{1/p} \]
with natural modification  for $p=\infty$. The following weight class is important for our main result.
\begin{definition}[Weight class $D_p$]\label{weight}
Let $v$ be a strictly positive weight on  $\cX$,  $1\leq p<\infty$ and $\frac{1}{p}+\frac{1}{p'}=1$. We say $v$ belongs to the class $D_p$  if there exists $t_0\in (0, T)$ such that
	\begin{equation}\label{AB2}
		\norm{ v^{-\frac{1}{p}}\varphi_{t_0}(x, \cdot)}_{L^{p'}(\cX)}<\infty \quad \text{for some}\; x\in\cX.
	\end{equation}
\end{definition}
We define the \textit{\textbf{local (time) maximal operator}} by
\[
H^*_Rf(x) = \sup_{0<t<R}\left|\int_{\cX}\varphi_t(x,y)f(y)\,d\mu(y)\right|\quad \text{ for } R>0.
\]
Throughout this article, we assume that $\{S_t\}$ attains a heat kernel
in some time interval $(0, T)$, and  $[\varphi_t(x, \cdot)]^{-1}$ is bounded on compact sets for every $x\in\cX$. 
To present our result in a very general setting, we impose the following conditions on the heat kernel. These conditions 
are relatable to the one imposed by \cite[Section~5]{BrunoARXIV}.

\begin{assumption}\label{Assump-1.2}
The following hold:
\begin{itemize}
\item[(i)]\label{a1} (convergence on a dense set) For every $f\in C_c(\cX)$, we have
$$\lim_{t\to 0} \int_{\cX} \varphi_t (x, y) f(y) \,d\mu(y)=f(x)$$
for every $x\in\cX$. This is also known as the vague convergence in literature.
\item[(ii)]\label{a2} (parabolic Harnack) There exists $\eta\in (0, 1]$ such that for $R\in (0, T)$ and $x_1, x_2\in \cX$, there exists a constant $C=C(R, x_1, x_2)$ satisfying
$$\varphi_{\eta R}(x_1, y)\leq C\, \varphi_{R}(x_2, y) \quad \text{for}\; y\in\cX.$$
Moreover, for any ball $B_R$ of radius $R$, there exists a positive constant $C=C(x, R)$ satisfying $\varphi_R(x, y)\geq C$ for 
all $y\in B_R$.
\item[(iii)]\label{a3} (time-space maximal inequality) There exist $\gamma\geq 1$, functions
$\xi_1, \xi_2:\cX \times (0, \infty)\to (1, \infty)$ so that
$\xi_i(\cdot, t), i=1,2,$ are bounded on every bounded set for each $t$,
and a function $\Gamma:(0, \infty)\to (0, \infty)$, such that for any $R\in (0, T/\gamma)$ we have
$$H^*_R f(x)\leq \xi_1(x, R) \mathcal{M}_{\Gamma(R)} f(x) + \xi_2(x, R) \int_{\cX} \varphi_{\gamma R}(x, y) f(y) \,d\mu(y)\quad \text{for}\; x\in\cX,$$
for every measurable $f\geq 0$, where $\mathcal{M}_R$ denotes the \textit{\textbf{ local Hardy-Littlewood maximal function}} defined by
$$ 
\mathcal{M}_R f(x)= \sup_{s\in (0, R)}\, \frac{1}{\mu(B(x,r))} \int_{B(x,r)} |f(y)|\, d\mu(y).
$$
\end{itemize}
\end{assumption}
 Condition (ii) above is closely related to the parabolic Harnack inequality that appears in the study of parabolic partial differential equations. For most of the operators, the functions $\xi_1, \xi_2$ in (iii) above would not depend on $R$ only, but allowing its dependence on $x$ provides us extra room to accommodate a larger class of operators. See for instance, the Laplace operator with a Hardy potential in Section~\ref{hp}. We also remark that it is enough to have the conditions in (i) and (iii)
 satisfied $\mu$-almost surely.

Now we can state our first main result.
\begin{theorem}\label{T-1.3} Let $(\cX, d, \mu)$ be a metric measure space with the  volume doubling property.
Let $v$ be a positive weight in $\cX$,\,$\{\varphi_t\}$ be the heat kernel satisfying Assumption~\ref{Assump-1.2} and $1 \leq p<\infty$. 
Then the  following statements are equivalent:
\begin{enumerate}
\item[\hypertarget{1}{(1)}]
 There exists \(R>0\) and a weight \(u\) such that the operator \(H^*_R\) maps \(L_v^p(\cX)\) into \(L_u^p(\cX)\) for \(p > 1,\) and maps \(L_v^1(\cX)\) into weak \(L_u^1(\cX)\) when \(p=1\).
\item[\hypertarget{2}{(2)}]
 There exists \(R>0\) such that \(\int_{\cX}\varphi_R(x,y)f(y)\,d\mu(y)\) is finite for all $x$, and the limit 
 
 \[\lim_{t \rightarrow 0} u(x,t) = f(x)\] 
holds $\mu$-almost everywhere for all \(f \in L_v^p(\cX)\).
\item[\hypertarget{3}{(3)}]
 There exists \(R>0\) such that \(\int_{\cX}\varphi_R(x,y)f(y)\,d\mu(y)\) is finite for some $x$.
\item[\hypertarget{4}{(4)}]
 The weight $v\in D_p.$ 
\end{enumerate}
\end{theorem}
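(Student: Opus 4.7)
The plan is to prove the cycle $(4) \Rightarrow (1) \Rightarrow (2) \Rightarrow (3) \Rightarrow (4)$. For $(4) \Rightarrow (1)$, I start from $v \in D_p$, so $\|\varphi_{t_0}(x_0,\cdot)v^{-1/p}\|_{L^{p'}(\mu)} < \infty$ for some $x_0$ and $t_0 \in (0,T)$. Using (possibly iterated applications of) the parabolic Harnack bound in Assumption \ref{Assump-1.2}(ii), I transfer this integrability to $\varphi_{\gamma R}(x,\cdot)$ for every $x \in \cX$ and all sufficiently small $R$: one gets a pointwise bound $\varphi_{\gamma R}(x, \cdot) \leq C(R, x)\, \varphi_{t_0}(x_0, \cdot)$, hence $K(x,R) := \|\varphi_{\gamma R}(x,\cdot)v^{-1/p}\|_{L^{p'}(\mu)} < \infty$. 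H\"older's inequality then bounds the second term of the time-space maximal inequality in Assumption \ref{Assump-1.2}(iii) by $\xi_2(x,R) K(x,R) \|f\|_{L_v^p}$. For the first term, I invoke the general principle that for any positive weight $v$ and $p \geq 1$ there exists a positive weight $\tilde u$ making $\mathcal M_{\Gamma(R)} \colon L_v^p \to L_{\tilde u}^p$ bounded (weak $L^1$ at $p=1$); the weight $u$ in statement (1) is then $\tilde u$ scaled by a sufficiently small positive factor depending on $\xi_1(\cdot, R)$ and $\xi_2(\cdot, R) K(\cdot, R)$, which absorbs the $x$-dependent nuisance.

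For $(1) \Rightarrow (2)$, I apply the classical Banach principle. Given $f \in L_v^p$ and $\epsilon > 0$, density of $C_c(\cX)$ in $L_v^p$ (available since $v\,d\mu$ is Radon) yields $g \in C_c(\cX)$ with $\|f-g\|_{L_v^p} < \epsilon$. Assumption \ref{Assump-1.2}(i) gives $S_tg(x) \to g(x)$ pointwise, so
\[\limsup_{t\to 0} |S_tf(x) - f(x)| \leq H^*_R(f-g)(x) + |f(x) - g(x)|.\]
Chebyshev's inequality together with the weighted maximal bound from (1) forces the $u$-measure of the superlevel sets of both right-hand terms to tend to $0$ as $\epsilon \to 0$, so the $\limsup$ vanishes $u$-a.e., and hence $\mu$-a.e.\ since $u > 0$. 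Finiteness of $\int \varphi_R(x,y)f(y)\,d\mu(y)$ off a null set then follows from $H^*_R f < \infty$ $u$-a.e.

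The implication $(2) \Rightarrow (3)$ is immediate. For $(3) \Rightarrow (4)$, set $\phi(y) := \varphi_R(x_0, y) v^{-1/p}(y)$, where $x_0$ is the point from (3). Substituting $f = g\,v^{-1/p}$ (so that $\|f\|_{L_v^p} = \|g\|_{L^p(\mu)}$) reduces the hypothesis to $\int_\cX \phi(y) |g(y)| \, d\mu(y) < \infty$ for every $g \in L^p(\mu)$. A standard converse of H\"older's inequality (truncate $\phi$ on balls to $\phi_N$, test against $\phi_N^{p'-1}/\|\phi_N\|_{L^{p'}(\mu)}^{p'/p}$, and combine via a weighted series to reach a contradiction if $\phi \notin L^{p'}(\mu)$) then forces $\phi \in L^{p'}(\mu)$, which is precisely the $D_p$ condition; the $p=1$ endpoint is handled via the $L^\infty$ dual.

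I expect the main technical obstacle to lie in $(4) \Rightarrow (1)$, where one must construct a single target weight $u$ that simultaneously tames the $x$-dependent factors $\xi_1, \xi_2, K(\cdot, R)$ from the time-space maximal inequality while also providing a weighted bound for the local Hardy--Littlewood operator $\mathcal M_{\Gamma(R)}$; the $p = 1$ endpoint introduces the extra subtlety that only a weak-type bound is available. The remaining implications reduce to classical approximation arguments and a duality-type converse of H\"older.
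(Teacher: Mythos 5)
Your proof follows essentially the same route as the paper's: the same cycle of implications, density of $C_c(\cX)$ plus the weak-type maximal bound for $(1)\Rightarrow(2)$, a duality argument for $(3)\Rightarrow(4)$, and the Harnack transfer combined with the time-space maximal inequality for $(4)\Rightarrow(1)$. However, there are genuine gaps at the two places you yourself identify as delicate.

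First, the ``general principle'' you invoke in $(4)\Rightarrow(1)$ --- that for \emph{any} positive weight $v$ and $p\geq 1$ there is a weight $\tilde u$ making $\mathcal M_{\Gamma(R)}\colon L^p_v\to L^p_{\tilde u}$ bounded --- is false as stated: if $v^{-1/p}\notin L^{p'}_{\loc}(\cX)$, then $L^p_v(\cX)$ contains functions that fail to be locally integrable, so $\mathcal M_{\Gamma(R)}f\equiv+\infty$ on sets of positive measure and no positive target weight can work. The correct statement is Proposition~\ref{blmx}, which requires $v^{-1/p}\in L^{p'}_{\loc}(\cX)$; this hypothesis does hold here because $v\in D_p$ together with the lower bound in Assumption~\ref{Assump-1.2}(ii) forces $D_p\subset D_p^{\mathcal M}$, but that deduction is a necessary step, not a free generality. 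Second, your recipe of rescaling $\tilde u$ by a small factor depending on $\xi_1,\xi_2$ works for $p>1$ (it is the paper's choice $u=\min\{\xi_1(\cdot,R_1)^{-p}u_1,\xi_2(\cdot,R_1)^{-p}u_2\}$), but at $p=1$ a pointwise rescaling of the weight does not convert the weak $(1,1)$ bound for $\mathcal M_{\Gamma(R)}$ into one for $f\mapsto\xi_1(\cdot,R_1)\mathcal M_{\Gamma(R)}f$, since the superlevel set $\{\xi_1\mathcal Mf>\lambda\}$ does not factor through a single threshold. The paper resolves this by decomposing $\cX$ into dyadic shells $D_i$, setting $\beta_i=\sup_{D_i}\xi_1(\cdot,R_1)$, and taking $w_1=\sum_k2^{-k}\beta_k^{-1}u_1\chi_{D_k}$; some such localization is needed and is missing from your sketch. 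Finally, in $(1)\Rightarrow(2)$ you only obtain finiteness of $\int_{\cX}\varphi_R(x,y)f(y)\,d\mu(y)$ for $\mu$-a.e.\ $x$, whereas statement (2) asserts it for all $x$; the missing step is to fix one $x_0$ where the integral is finite and propagate to every $x$ via the Harnack comparison of Assumption~\ref{Assump-1.2}(ii), at the cost of replacing $R$ by $\eta R$.
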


\subsection{Prior work, motivation and new contribution} 
It is well-known that, for $1<p<\infty,$  the  Hardy–Littlewood maximal operator $\mathcal{M}$ is bounded on  weighted  $L^p_w(\mathbb R^n)$ if and only if  $w \in A_p$ (Muckenhoupt weights)\footnote{$w\in A_p$ iff $\left(f_{B}\right)^{p} \leq \frac{c}{w(B)} \int_{B} (f)^{p} w d x $ for $f\geq 0$ and ball $B,$ where $f_B=m(B)^{-1}\int_Bf(x)dx.$ }. See for instance, \cite[Chapter V]{MR1232192}.  As a consequence, we get   ``a.e. pointwise convergence'' for the  classical heat  semigroup $e^{t\Delta}$, that is, for  $f\in L^p_w(\Rn)$, we have
\begin{equation}\label{ds}
  \lim_{t\to 0} e^{t\Delta} f(x) =f(x) \quad \text{for} \  a.e.  \  x \in \Rn. 
\end{equation}
On the other hand, by the abstract Nikishin theory, it is also known that  the a.e.\ pointwise convergence implies  the weak boundedness of  $\mathcal{M}$ form  $L^p_v(\Rn)$  into $L^{p, \infty}_u(\Rn)$ for some weight $u.$ See \cite[Chapter VI]{Jose1985}. However, the strong boundedness  requires a vector-valued approach developed in \cite{Rubio81, Mittag, Carleson81}.   Hartzstein, Torrea and Viviani  in their influential work \cite{vivianiPAMS}  have  successfully adapted this approach to characterize the weight class $D_p$ for  which \eqref{ds} holds. More specifically, they showed that \eqref{ds} holds for $f\in L^p_v(\Rn)$ if and only if  $H^*_R:L^p_v(\Rn)\to L^p_u(\Rn)$ is strongly bounded for some weight $u$ which is again equivalent to $v\in D_p$. A similar result was also proved  for the Poisson equation in \cite{vivianiPAMS}. Since then,  these ``a.e. pointwise convergence" problems have attracted attention of many authors to understand  the  similar behaviour  in various other setting. Specifically, in recent years, ``a.e. pointwise convergence" have already been studied for the following operators:
\begin{itemize}
    \item[-] heat-diffusion problems associated with the harmonic oscillator and the Ornstein-Uhlenbeck opertaor by 
    Abu-Falahah, Stinga and Torrea in \cite{stingaPA}.
    
    \item[-] Laguerre-type operators  by  Garrig\'os et.\ al.\ in \cite{Laggure2014, Lagguare2017}. 
    
    \item[-] Hermite operator $H=-\Delta +|x|^2$ and the associated Poisson case  by  Garrig\'{o}s et.\ al. in  \cite{torreaTAMS}.
    \item[-] Bessel operator by  Cardoso  in \cite{CardosoJEE} 
    \item[-] More recently,  some authors have studied this problem in the non-Euclidean setting. In fact,  Cardoso \cite{CardosoARXIV}, Bruno and Papageorgiou \cite{BrunoARXIV}, and Alvarez-Romero, Barrios and Betancor \cite{ARBB} have  investigated this  problem  on the Heisenberg group, symmetric spaces, and homogeneous trees, respectively, while in \cite{DR1} Bhimani and Dalai have  treated  the torus and waveguide manifold case. 
\end{itemize}
Note that for the Laplacian operator in $\Rn$, the heat kernel at time $t$ is simply a dilation of the heat kernel at time $1$. In this respect, the problem in this article is closely related to topic of almost everywhere convergence for families of convolutions with approximate identities. The latter has already been thoroughly investigated, see for instance \cite{DG23,Kerman,Shapiro,Avram}
and the references therein. However, it is important to note that, in our case, the semigroups are not necessarily generated by convolution or dilation. We should also mention another active program, initiated by the celebrated work of Carleson \cite{Car80},
which analyzes a similar problem of pointwise convergence but for the Schr\"{o}dinger eqaution. For some important development in this
direction, see \cite{DK82,Bo16,DGL17,DZ19} and references therein.

In light of the ongoing interest in this area and the fact that the heat semigroup plays a central role across mathematical analysis, geometry, and probability, we are motivated to propose a general framework for addressing the ``almost everywhere pointwise convergence" problem. This leads us to Assumption \ref{Assump-1.2}. In the process, we also characterize the weighted Lebesgue spaces for the boundedness of maximal operators on metric measure spaces, which may be of independent interest (see Proposition \ref{blmx}), and will play a crucial role in order to prove Theorem \ref{T-1.3}.

Of course, the main mathematical challenge in practice is to verify Assumption \ref{Assump-1.2}.  In this paper, we are able to do this for  the following  wide range of cases:
\begin{enumerate}
    \item  The semigroups arsing in the probability theory, particularly, in the context of L\'evy processes:
  \[e^{-t\psi (-\Delta)} f(x) = \int_{\Rn}e^{-t\psi(\xi)} \hat{u}(\xi)  e^{ix\xi} d\xi\]
 where 
$\psi$ is continuous and strictly increasing satisfying a  certain weak scaling properties. In particular, we cover:
\begin{itemize}
    \item[--] a large class of pure nonlocal operators, including the fractional Laplacian (see Example \ref{Eg2.2})
    \item[--] a large class of local-nonlocal operators, including the sum of Laplacian and fractional Laplacian (see Example \ref{Eg2.5})
\end{itemize}
These operators arise in anomalous transport \cite{Klages}, quantum theory \cite{Daub,LS10}, crystal dislocation \cite{DFV14}, image
reconstruction via denoising \cite{BCM10}, to name just a few. The case of fractional Laplacian is also treated in \cite{BrunoARXIV}
with the help of Caffarelli-Silvestre extension, which may not be possible for general nonlocal operators covered in this article.
Furthermore, our approach also covers the non-tangential convergence for a general family of nonlocal operators (see Remark ~\ref{R-nontan}).
\item  The  Dunkl  operators which to some extent arises from the theory of Riemannian symmetric spaces: see Section \ref{sds} and Theorem \ref{T-Dunkl}.
Generally speaking, these are commuting differential-difference operators, associated to a finite reflection group on the Euclidean space.  They have been successfully applied to the analysis of quantum many body systems of Calogero-Moser-Sutherland type, and have gained considerable interest in mathematical physics,  see \cite{Rosler}. 
\item The  Laplacian with a Hardy potential:
\[\mathcal{L}_b=-\Delta + \frac{b}{|x|^2}.\]
See Section \ref{hp} and Theorem \ref{T8.1}. We could also cover gradient perturbation of the Laplacian, See Example \ref{Eg-2.1}.
The study of $\mathcal{L}_b$ is motivated by various fields in physics and mathematics, including combustion theory, the Dirac equation with Coulomb potential, quantum mechanics, and the analysis of perturbations in classical space-time metrics. See \cite{Kalf, JLVazquez}.
\item The  heat kernels arising from the analysis  of \textit{fractals}.  Roughly speaking,  fractals are subsets of $\Rn$ with certain self-similar properties (e.g. \textit{Sierpi\'nski gasket}). Specifically, we cover
\begin{itemize}
    \item[--]Laplacian on unbounded Sierpi\'nski  gasket $M$ in $\mathbb{R}^2$ (see Example \ref{fractal}) 
\end{itemize}
\item The heat kernel arising from the  differential geometry:
\begin{itemize}
    \item[--] Laplace-Beltrami operator on  Riemannian manifolds (see Example \ref{Eg9.6}) 
\end{itemize}
\end{enumerate}
Next, in Section~\ref{S-nonhom}, we also consider nonhomogenneous problem associated to \eqref{AB1} of the form:
\begin{equation}\label{NLAB1}
\begin{cases}
  \partial_t u + \mathcal{L} u  =F(x, t)
  \\ 
  u(x, 0) = f(x)
\end{cases}, \quad (x, t)\in \cX \times \mathbb R_+.
\end{equation}
Formally, by Duhamel's principle, the solution
of \eqref{NLAB1} is written as
\[u(x, t) = e^{-t\mathcal{L}}f(x)+ \varphi_t\odot F(x, t)=e^{-t\mathcal{L}}f(x)+ \int_0^t \int_{\cX}\varphi_{t-s}(x, y) F(y, s)\, d\mu(y)\, ds.\]
In the early 1980s and 1990s, Weissler, Brezis, Haraux, and Cazenave developed the well-posedness theory for the classical heat equation (i.e., \eqref{NLAB1} with  $\mathcal{L}=-\Delta$ and $F(x,t)= |u|^{\alpha-1} u$ or $u^{\alpha}$ (power type nonlinearity)) on $\Rn$ in their seminal works \cite{brezis1996nonlinear, haraux1982non, Weissler81}. Since then, significant progress has been made in understanding the well-posedness theory for equation \eqref{NLAB1} in various settings over the past four decades. It is impossible to cite all of these developments here, so we refer the reader to \cite[Chapter II]{Philippe} and the references therein. At this point, it is natural to study the ``a.e.\ pointwise convergence problem" for \eqref{NLAB1}. However, there appear to be no results available concerning the pointwise behaviour of solutions to \eqref{NLAB1} even  for the classical heat equation  on $\Rn.$ 

To address this  problem,  we first  characterize the  class of weight  pair $(v,w)$, where    $v:\cX\to (0, \infty)$ and $w:\mathbb{R}_+\to (0, \infty)$  and  formulate  certain assumption on $F$ (see Assumption \ref{Assump-9.1} which is  similar to Assumption \ref{Assump-1.2} above) such that  the following limit 
\[\lim_{t \rightarrow 0} \varphi_t\odot F(x, t) = 0 \  \ a.e. \  x\in \cX \quad  \] 
holds for all  $ F \in L^q_w((0, R), L_v^p(\cX))$
where $vw \in D_{q,p}$, see  Definition \ref{wnp} and  Theorem  \ref{T-9.1}. As a consequence, we get 
\[ \lim_{t\to 0} u(x,t)=  \lim_{t\to 0}\left(e^{-t\mathcal{L}}f(x)+ \varphi_t\odot F(x, t)\right) = f(x)\  \  \text{for} \ \  \ a.e. \  x\in \cX\]
for all \( f \in L_{\tilde v}^p(\mathcal{X}) \) with \( \tilde{v} \in D_p \).   Moreover,   we   showed that  Assumption \ref{Assump-9.1} is met for the  classical semi-linear   heat equation:
\begin{equation}\label{slh}
   \begin{cases}
       \partial_tu-\Delta u = |u|^{\alpha-1}u\\
       u(x, 0)=f(x)
   \end{cases}, \quad (x, t)\in \Rn\times (0,\infty), \alpha>1. 
\end{equation}
Thus,   the solution to \eqref{slh}  converges to $f(x)$ for  a.e.  $x$ and $f\in L^p_{\tilde v}(\Rn), \tilde{v}\in D_p$, under certain condition on $p$ (See Theorem \ref{T9.4} for exact statement). This complements the existing well-posedness theory, and  to the best of the authors' knowledge, this is the first result that  describes the pointwise behavior of non-linear  heat equation  \eqref{slh}.
Of course, it would be interesting to explore similar results for various other nonlinear heat equations. While we will not pursue this here, we believe that our proof method could be useful to readers interested in this direction.

\subsection{Proof techniques and novelties}
We shall see that convergence on a dense set and parabolic Harnack property (Assumption \ref{Assump-1.2} (i) and (ii)) 
together with the boundedness of local (time) maximal operator  $H_R^*$ implies a.e.\ convergence.  In order to prove that $v\in D_p$ implies  $H_R^*$ strongly bounded, we  shall require  time-space maximal inequality (Assumption \ref{Assump-1.2} (iii)). 
To this end, the key ingredient is  to invoke strong boundedness of $\mathcal{M}_R:L^p_v \to L^p_u$ for  $v\in D_p^{\mathcal{M}}\supset D_p$ (see Proposition \ref{blmx}). This essentially rely on vector-valued  weak type  $(1,1)$ inequalities  developed by  Fefferman and Stein \cite{Fefferman},   Grafakos, Liu and Yang  \cite{Grafakos}.   It should be noted that due to  Assumption \ref{AB1} (iii) we can compare  $H_R^*$ and the Hardy-Littlewood maximal function $\mathcal{M}_R$. 

Now, we briefly discuss the key ideas and main  difficulties    for implementing Assumptions \ref{Assump-1.2} in our setup:
\begin{itemize}
    \item In the case of the gradient perturbation of 
$\Delta$ in Example \ref{Eg-2.1} or the Laplace-Beltrami operator
on Riemannian manifolds in Example~\ref{Eg9.6}, the heat kernel enjoys a Gaussian type bound with different Gaussian constants for the upper and lower bounds. In this case, it is possible to follow the idea of 
\cite{vivianiPAMS} with suitable modification. However, in case of purely nonlocal operators, as discussed in Example \ref{Eg2.2}, decay bounds for the heat kernel are no longer Gaussian. Therefore, establishing that the time maximal operator associated with the heat semigroup is dominated by the classical local maximal operator, as stated in Assumption \ref{Assump-1.2} (iii), requires a careful and appropriate choice of annuli (see Lemma \ref{L2.3}). 
 \item For mixed local-nonlocal operators, as considered in Example~\ref{Eg2.5},  sharp lower and upper bounds in the whole space seem to be unknown. The difficulty arises mainly due to the interplay between local and nonlocal operators.
    Consequently, the verification of Assumption~\ref{Assump-1.2}
    becomes far more involved compared to Example~\ref{Eg2.2} (see Lemma~\ref{EL4.6}).
    
    \item In the Dunkl operator framework, although the underlying space is a Euclidean space, the associated measure $\mu_\mathsf{k}$, which depends on the multiplicity function $\mathsf{k}$, is not invariant under standard translations. 
    As a result, the time maximal operator corresponding to the Dunkl heat kernel does not seem compatible with the classical Hardy-Littlewood maximal function. Furthermore, the sharp heat kernel estimate in this case \cite{DA23} involves a distance function
    induced by the reflection group which is not quite favourable for our analysis.
    To address this, we replace the classical Hardy-Littlewood maximal function with the Dunkl maximal function (see \eqref{Dunml-LMF}),  and then employ an analogous approach in proving the boundedness of the local Dunkl maximal functions (see Theorem \ref{T-Dunkl}).
    
    \item When the Hardy potential (inverse square potential) is added to the standard Laplacian, the problem becomes notably more challenging due to the singularities in the heat kernel’s bound at the origin (see \eqref{H-bound}). Moreover, the heat kernel is also not conservative. In this context, establishing pointwise convergence for compactly supported initial data is nontrivial, as addressed in Lemma \ref{L8.2} via Duhamel’s principle.
    Furthermore, the presence of singular terms in the heat kernel’s bound requires a modification of the local maximal function (see \eqref{H-LMF}), which is then shown to be bounded in Lemma \ref{L8.4}. This modification also introduces two locally bounded functions $\xi_1$ and $\xi_2$, in the bound of the local maximal function associated with the heat kernel (see Lemma \ref{L8.3}) which has to be managed appropriately.
    \item When the operator in \eqref{AB1} is considered with a nonhomogeneous term, the domain of the inhomogeneity expands into a time-dependent space. This requires a careful adjustment of the weighted space and the local maximal function, along with a thorough analysis of their boundedness (see Assumption \ref{Assump-9.1} and Theorem \ref{T-9.1}). Notably, we present an interesting application that falls within this framework (see Theorem \ref{T9.4}).
\end{itemize}

The rest of the paper is organized as follows. In Section \ref{pre}, we set the notation and definitions for metric measure spaces and discuss the boundedness of the local maximal function on weighted Lebesgue spaces. Additionally, we recall some fundamental results that will be crucial in proving our main theorems. Section \ref{proof} is devoted to the proof of Theorem \ref{T-1.3}. In Section \ref{examples}, we present examples that satisfy Assumptions \ref{Assump-1.2}, covering the following cases: gradient perturbations of \( \Delta \), a class of nonlocal operators, a class of mixed local-nonlocal operators, the Laplace-Beltrami operator on Riemannian manifolds, the Laplacian on an unbounded Sierpi\'nski gasket \( M \) in \( \mathbb{R}^2 \), Dunkl operators, and the Laplacian with Hardy potential. Finally, in Section \ref{S-nonhom}, we analyze the nonhomogeneous problems associated with \eqref{AB1}, followed by an interesting application.

\section{Preliminaries}\label{pre} 
\subsection{Notations and Definitions}
\begin{comment}
Let $\{S_t\}$ be a semigroup defined on a suitable Banach space  containing $C_c^\infty(\cX)$ as a dense set. 
An operator $\mathcal{L}$, with the domain $D(\mathcal{L})\supset C_c^\infty(\cX)$, is said to be the\textbf{ weak generator} (\textbf{infinitesimal generator}) of $S_t$, if for all $f, g\in C_c^\infty(\cX)$ we have
$$\lim_{t\to 0} \int_{\cX}   \frac{1}{t}(S_t f(x)-f(x)) g(x) \, d\mu(x)=\int_{\cX} \mathcal{L}f(x) g(x)\, d\mu(x).$$
The above definition is consistent with the notion of weak solutions and distributional solutions in partial differential equations. Furthermore, if the
domain of the usual generator of $\{S_t\}$ contains $C_c^\infty(\cX)$, then it coincides with $\mathcal{L}$ by the density of $C_c^\infty(\cX)$.\footnote{AB: We can remove this paragraph above}
\end{comment}

Let \((\cX, d)\) be a locally compact, separable metric space equipped with a Radon measure \(\mu\). 
Denote the ball with center $x$ and radius $r$ in $(\cX, d)$ by 
\[B(x, r) = \{y \in \cX : d(x, y) < r\}.\]
We assume that  measure $\mu$ on a metric space $\cX$ is volume doubling, that is, the measure of a ball is comparable to the measure of a ball with the same center but half the radius. Specifically,  there is a constant  $C>0$ such that
\[
\mu(B(x, 2r)) \leq C \mu(B(x, r))
\]
for all $x\in \cX$ and  $r>0.$ In this case, we referred  the triple \((\cX, d, \mu)\)  a \textit{\textbf{space of homogeneous type}} or simply a \textit{\textbf{metric measure space}} with doubling property. 
We denote $0$ as some fixed point in $\cX$.
We also define the  space of {\it locally} $L^p-$integrable functions in $(\cX, d, \mu)$ by 
 \[L^p_{\loc}(\cX)=  \left\{ f:\cX \to \mathbb{R}\ \ \text{such that} \; {f\chi_{B(0,r)} \in L^p(\cX)} \quad  \forall\;  r>0 \right\}. \]

 The weak \(L^p\)-quasinorm is given by
\[
\|f\|_{L^{p, \infty}} = \sup_{\lambda > 0} \, \lambda \, \mu\big(\{x \in \mathcal{X} : |f(x)| > \lambda\}\big)^{\frac{1}{p}}.
\]
\subsection{Two weight problem for the Hardy-Littlewood maximal  operator}
For $f\in L^p_{\text{loc}}(\cX),$  $x\in \cX,$ and $R>0,$ the local Hardy-Littlewood  maximal function $\mathcal{M}^{\cX}_R f$ is defined by
$$ 
\mathcal{M}^{\cX}_R f(x)= \sup_{r\in (0, R)}\, \mathcal{A}_rf(x)
$$
where 
\[\mathcal{A}_rf(x)=\frac{1}{\mu(B(x,r))} \int_{B(x,r)} |f(y)|\, d\mu(y).\]
In this subsection, we  prove  that given a weight $v \in D^{\mathcal{M}}_p$(to be defined below),  there exists another weight $u$ so that   $\mathcal{M}^{\cX}_R f:L^p_v(\cX)\to L^p_u(\cX)$ is strongly bounded. 
We define  weight class
\[D_p^{\mathcal{M}}= \left\{ v:\cX\to (0, \infty):v^{-\frac{1}{p}} \in L^{p'}_{\loc}(\cX) \right\}.\]
In what follows, we use the notation $\mu_w$ to  denote the measure induced by weight $w$, that is, $\mu_w(dx)=w(x)\mu(dx)$.

\begin{proposition}[Boundedness of $\mathcal{M}^{\cX}_R$ in metric measure spaces]\label{blmx}
Let $(\cX, d, \mu)$ be a metric measure space with the volume doubling property.
Suppose $v\in D^{\mathcal{M}}_p$. Then there exists a weight \( u \) such that the operator \( \mathcal{M}^{\cX}_R \) maps \( L^p_v(\cX) \) into 
\( L^p_u(\cX) \) for \( p > 1 \), and maps \( L^1_v(\cX) \) into weak \( L^1_u(\cX) \) when \( p = 1 \).

Conversely, for \( 1 \leq p < \infty \), if $\mu_v(B(0,r))<\infty$
for all $r$ and 
the operator \(\mathcal{A}_R\) maps \( L^p_v(\cX) \) into weak \( L^p_u(\cX) \) for some weight \( u \), then \( v^{-\frac{1}{p}} \in L^{p'}_{\loc}(\cX) \).
\end{proposition}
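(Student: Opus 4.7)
The plan is to prove the two directions separately, both hinging on a Hölder-type pointwise bound together with the doubling of $\mu$. For the \emph{forward direction}, I would first apply Hölder's inequality with exponents $p, p'$ to the defining average,
\[
\mathcal{A}_r f(x) \leq \big(\mathcal{A}_r(|f|^p v)(x)\big)^{1/p} \, \big(\mathcal{A}_r(v^{-p'/p})(x)\big)^{1/p'},
\]
and take the supremum over $r \in (0, R)$ to deduce
\[
\mathcal{M}^{\cX}_R f(x) \leq \big[\mathcal{M}^{\cX}_R(|f|^p v)(x)\big]^{1/p}\, F(x), \qquad F(x) := \big[\mathcal{M}^{\cX}_R(v^{-p'/p})(x)\big]^{1/p'}.
\]
The hypothesis $v \in D_p^{\mathcal{M}}$ is precisely $v^{-p'/p} \in L^1_{\loc}(\cX)$, so by the scalar weak $(1,1)$ bound for $\mathcal{M}^{\cX}_R$ on spaces of homogeneous type (Vitali-type covering lemma, Coifman--Weiss) one has $F < \infty$ $\mu$-a.e. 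The natural candidate target weight is then $u(x) = \phi(x)/F(x)^p$ for a positive $\phi$ adapted to a Whitney-type partition of $\cX$, which is strictly positive a.e.\ and converts the desired inequality into a weighted control of $\mathcal{M}^{\cX}_R(|f|^p v)$ with $|f|^p v \in L^1(\cX)$.

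To finish the forward direction, I would combine this pointwise bound with the vector-valued weak type $(1,1)$ inequality of Fefferman--Stein, in the form extended by Grafakos--Liu--Yang to doubling metric measure spaces, applied to a decomposition $f = \sum_k f \chi_{E_k}$ where $\{E_k\}$ is a countable partition whose $R$-enlargements have bounded overlap. For $p > 1$ this yields the strong $L^p_u$ bound, while for $p = 1$ the scalar weak $(1,1)$ inequality suffices directly. I expect this construction and verification of $u$ to be the main technical obstacle, since $v$ is not assumed to lie in any Muckenhoupt $A_p$ class and the maximal operator interacts poorly with weighted norms outside that regime.

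For the \emph{converse}, I would argue by contradiction. Assume $v^{-1/p} \notin L^{p'}_{\loc}(\cX)$ and first consider $p > 1$. Pick $x_0 \in \cX$ and $r_0 < R$ with $\int_{B(x_0, r_0)} v^{-p'/p}\,d\mu = \infty$, and define
\[
f_n := v^{-1/(p-1)} \chi_{A_n}, \qquad A_n := B(x_0, r_0) \cap \{v \geq 1/n\};
\]
the finiteness of $\mu_v(B(0, r))$ ensures $f_n \in L^p_v$, while a direct computation gives $\|f_n\|_{L^p_v}^p = \int f_n \,d\mu =: I_n \to \infty$. For $x \in B(x_0, \epsilon)$ with $\epsilon < R - r_0$, the inclusion $B(x, R) \supset B(x_0, r_0)$ combined with the doubling of $\mu$ yields $\mathcal{A}_R f_n(x) \geq c_0 I_n$ for a constant $c_0$ depending only on $x_0$ and $R$. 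Plugging this into the assumed weak $L^p_u$ inequality at level $\lambda_n = c_0 I_n/2$ and rearranging leads to $\mu_u(B(x_0, \epsilon)) \lesssim I_n^{-(p-1)} \to 0$, contradicting the positivity of $u$. The case $p = 1$ is analogous: now $v^{-1} \notin L^\infty_{\loc}$ means $\mathrm{ess\,inf}_{B(x_0, r_0)} v = 0$, and testing with $f_\epsilon := \chi_{B(x_0, r_0) \cap \{v < \epsilon\}}$ one checks $\|f_\epsilon\|_{L^1_v} \leq \epsilon \mu(A_\epsilon)$ while $\mathcal{A}_R f_\epsilon(x) \geq c_0 \mu(A_\epsilon)$ on a fixed ball near $x_0$, so the weak $(1,1)$ bound forces $\mu_u(B(x_0, \epsilon')) \leq C\epsilon \to 0$, again a contradiction.
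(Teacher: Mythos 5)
Your forward direction for $p>1$ contains a genuine gap. The pointwise Hölder bound $\mathcal{M}^{\cX}_R f\le[\mathcal{M}^{\cX}_R(|f|^pv)]^{1/p}F$ is correct, but the choice $u=\phi/F^p$ reduces the desired strong estimate to
\[
\int_{\cX}\mathcal{M}^{\cX}_R g\,\phi\,d\mu\le C\|g\|_{L^1(\cX)}\qquad\text{for all }g\ge 0,
\]
with $g=|f|^pv$ ranging over all of $L^1_+$. No positive weight $\phi$ can satisfy this: testing with $g=\chi_{B(y,\delta)}$ and letting $\delta\to0$ forces $\int_{0<d(x,y)<R/2}\mu(B(x,2d(x,y)))^{-1}\phi(x)\,d\mu(x)\le C$ uniformly in $y$, which fails wherever $\phi$ has positive integral — this is the classical obstruction that the maximal operator admits no strong two-weight $(1,1)$ bound with source weight $1$; it maps $L^1$ only into weak $L^1$. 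Kolmogorov's inequality upgrades the weak bound only to $L^s$, $s<1$, hence to $L^{sp}_u$ with $sp<p$ for $\mathcal{M}^{\cX}_Rf$, never to $L^p_u$. Your subsequent appeal to the Fefferman--Stein vector-valued inequality does not close this gap: applied to the spatial decomposition $f=\sum_k f\chi_{E_k}$ of a \emph{single} $f$ it again produces only a weak-type statement about $f$. What is actually required — and what the paper does — is to prove the $\ell^p$-valued $L^s(E_k)$ estimate for \emph{arbitrary} families $\{f_j\}$ (vector-valued weak $(1,1)$ for $\mathcal{M}^{\cX}$ applied to the truncations $f_j\chi_{B(0,2^{k+1}R)}$, Kolmogorov on the finite-measure annulus $E_k$, then Hölder using $v^{-p'/p}\in L^1_{\loc}$), and then invoke the Rubio de Francia--type factorization of Lemma~\ref{lm-r1}, which \emph{extracts} the existence of a weight $u_k$ on $E_k$ from that vector-valued estimate; the $u_k$ are finally patched together. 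Your $p=1$ case is acceptable in spirit, since there the scalar weak $(1,1)$ inequality, the local boundedness of $v^{-1}$, and a patching over annuli genuinely suffice.

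Your converse is correct and takes a different, somewhat more direct route than the paper. The paper integrates $\mathcal{A}_Rf$ against $u$ over a small ball, applies Kolmogorov's inequality, and concludes by duality; you instead test the weak inequality on the explicit family $f_n=v^{-1/(p-1)}\chi_{B(x_0,r_0)\cap\{v\ge 1/n\}}$ (resp.\ $\chi_{B(x_0,r_0)\cap\{v<\epsilon\}}$ for $p=1$) and derive $\mu_u(B(x_0,\epsilon))=0$, contradicting the strict positivity of $u$. This avoids Kolmogorov altogether. One small inaccuracy: the membership $f_n\in L^p_v$ follows from $v\ge 1/n$ on $A_n$ together with $\mu(B(x_0,r_0))<\infty$, not from the hypothesis $\mu_v(B(0,r))<\infty$; this does not affect the validity of the argument.
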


\begin{remark}
The weight class of Proposition \ref{blmx} should be compared with the weight class of Theorem \ref{T-1.3}. In fact, we have $D_p \subset D_{p}^{\mathcal{M}}$, due to Assumption~\ref{Assump-1.2}(ii).
\end{remark}

For the convenience of the reader, we recall the following two useful lemmas first:
\begin{lemma}[Theorem 1.1 in  \cite{TorreaPM}] \label{lm-r1}
Let $(Y, \mu)$ be a measurable space, $\mathcal{B}_1$ and $\mathcal{B}_2$ be Banach spaces. Suppose $T$  be a sublinear operator from $T: \mathcal{B}_1 \rightarrow \mathcal{B}_2$ for some $0<s<p<\infty$, satisfying
$$
\left\|\left(\sum_{j \in \mathbb{Z}}\left\|T f_{j}\right\|_{\mathcal{B}_2}^{p}\right)^{\frac{1}{p}}\right\|_{L^{s}(Y)} \leq C_{p, s}\left(\sum_{j \in \mathbb{Z}}\left\|f_{j}\right\|_{\mathcal{B}_1}^{p}\right)^{\frac{1}{p}},
$$
where $C_{p, s}$ is a constant depending on $Y, \mathcal{B}_1, \mathcal{B}_2,p$ and $s$.
Then there exists a positive function $u$ such that $u^{-1} \in L^{\frac{s}{p}}(Y),\left\|u^{-1}\right\|_{L^{\frac{s}{p}}(Y)} \leq 1$ and
$$
\int_{Y}|T f(x)|^{p} u(x)\, d \mu(x) \leq C_Y\|f\|_{\mathcal{B}_1}^{p}
$$
for some constant $C_Y$ depending on $Y, \mathcal{B}_1, \mathcal{B}_2,p$ and $s$.
\end{lemma}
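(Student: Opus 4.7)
The plan is to handle the two directions separately. For the forward direction, I would carry out a vector-valued (Rubio de Francia style) argument: starting from the unweighted weak-$(1,1)$ boundedness of $\mathcal{M}^{\cX}_R$, which is standard in doubling metric measure spaces via a Vitali covering argument, I use the local integrability $v^{-1/p}\in L^{p'}_{\loc}$ together with Hölder's inequality to pass to the weighted input, and then invoke Lemma~\ref{lm-r1} to extract the weight $u$. The converse is a direct duality argument against test functions supported in a fixed ball.

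For the forward direction, the substitution $g = f v^{1/p}$ reduces the target bound $\mathcal{M}^{\cX}_R : L^p_v \to L^p_u$ to $\tilde T : L^p \to L^p_u$, where $\tilde T g = \mathcal{M}^{\cX}_R(g v^{-1/p})$. Fix a compact $K\subset\cX$ with its $R$-neighbourhood $K' = \{y : d(y, K) \le R\}$. Since $\mathcal{M}^{\cX}_R g(x)$ for $x \in K$ only sees $g$ on $K'$, Hölder's inequality combined with $v^{-1/p} \in L^{p'}(K')$ gives
\[
\|g v^{-1/p}\chi_{K'}\|_{L^1(\cX)} \leq \|g\|_{L^p(\cX)}\,\|v^{-1/p}\|_{L^{p'}(K')} \leq C_{K'}\|g\|_{L^p(\cX)}.
\]
The unweighted weak-$(1,1)$ bound for $\mathcal{M}^{\cX}_R$ together with Kolmogorov's inequality then yields
\[
\|\tilde T g\|_{L^s(K)} \leq C_K \|g\|_{L^p(\cX)}, \qquad 0<s<1.
\]
Minkowski's inequality (valid for $s\leq p$) upgrades this scalar estimate to the vector-valued form required by Lemma~\ref{lm-r1}:
\[
\Bigl\|\Bigl(\sum_{j}|\tilde T g_j|^p\Bigr)^{1/p}\Bigr\|_{L^s(K)} \leq C_K \Bigl(\sum_{j}\|g_j\|_{L^p(\cX)}^p\Bigr)^{1/p}.
\]
Applying Lemma~\ref{lm-r1} with $Y=K$, $\mathcal{B}_1=L^p(\cX)$, $\mathcal{B}_2=\mathbb{R}$ produces a local weight $u_K$ on $K$ realising the desired $L^p$-bound there. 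Fixing a countable cover $\{K_n\}$ of $\cX$ by compact sets and taking a weighted combination $u = \sum_n c_n u_{K_n}\chi_{K_n}$ with $c_n>0$ chosen small enough to absorb the constants $C_{K_n}$ gives a single global positive weight with the required mapping property. The case $p=1$ follows the same blueprint, except that one invokes the Grafakos--Liu--Yang vector-valued weak-$(1,1)$ inequality \cite{Grafakos} together with a weak-type analogue of Lemma~\ref{lm-r1} to produce a weight $u$ for which $\mathcal{M}^{\cX}_R : L^1_v \to L^{1,\infty}_u$.

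For the converse, fix any ball $B = B(x_0, r_0)$ and pick $R > 2r_0$. For $f\geq 0$ supported in $B$ and $x\in B$, the inclusion $B \subset B(x,R)$ together with volume doubling gives
\[
\mathcal{A}_R f(x) \geq \frac{c}{\mu(B(x_0, R))}\int_B f\,d\mu.
\]
The assumed weak-type bound at the threshold $\lambda = \frac{c}{2\mu(B(x_0,R))}\int_B f\,d\mu$, combined with $\mu_u(B)>0$ (as $u$ is strictly positive), yields
\[
\int_B f\,d\mu \leq C_B \|f\|_{L^p_v(\cX)}
\]
for every non-negative $f$ supported in $B$ with $\|f\|_{L^p_v}<\infty$. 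Substituting $f = g v^{-1/p}$ (so $\|f\|_{L^p_v}=\|g\|_{L^p(B)}$) turns this into $\int_B g\, v^{-1/p}\,d\mu \leq C_B\|g\|_{L^p(B)}$, and $L^p$--$L^{p'}$ duality on $B$ immediately forces $v^{-1/p}\chi_B \in L^{p'}(\cX)$. The hypothesis $\mu_v(B(0,r))<\infty$ ensures that a rich family of bounded compactly supported test functions lies in $L^p_v(\cX)$, so that the duality argument genuinely accesses all of $L^p(B)$.

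The principal technical hurdle will be the forward direction: (i) producing a usable $L^p\to L^s(K)$ scalar bound for $\tilde T$, which is why one is forced to absorb $v^{-1/p}$ with its local $L^{p'}$ integrability and fall back on the weak-$(1,1)$ control of the unweighted maximal operator plus Kolmogorov; and (ii) assembling the local weights $u_{K_n}$ into a single global $u$ with summability adequate to preserve the bound on every bounded set. For $p=1$ the bookkeeping is more delicate still, since one must track how the weak-type output of the Grafakos--Liu--Yang inequality propagates through the vector-valued extrapolation to yield a genuine weak-$L^1_u$ conclusion.
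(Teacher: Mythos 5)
You have not proved the statement you were asked to prove. Lemma~\ref{lm-r1} is the abstract extrapolation result (Theorem~1.1 of \cite{TorreaPM}): from a vector-valued $L^s(Y)$ inequality for a sublinear $T$ one must \emph{construct} a weight $u$ with $u^{-1}\in L^{s/p}(Y)$, $\|u^{-1}\|_{L^{s/p}(Y)}\leq 1$, and $\int_Y|Tf|^p u\,d\mu\leq C\|f\|_{\mathcal{B}_1}^p$. What you have written is instead an outline of the proof of Proposition~\ref{blmx} (the two-weight boundedness of $\mathcal{M}^{\cX}_R$, including its converse direction), and at the decisive moment your argument says ``invoke Lemma~\ref{lm-r1} to extract the weight $u$.'' That makes the proposal circular as a proof of Lemma~\ref{lm-r1}: the weight whose existence is the entire content of the lemma is obtained by appealing to the lemma itself. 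Nothing in your text addresses how one passes from the hypothesis $\bigl\|\bigl(\sum_j\|Tf_j\|_{\mathcal{B}_2}^p\bigr)^{1/p}\bigr\|_{L^s(Y)}\leq C_{p,s}\bigl(\sum_j\|f_j\|_{\mathcal{B}_1}^p\bigr)^{1/p}$ to the existence of $u$.

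The missing idea is the Nikishin--Maurey/Rubio de Francia factorization step: one considers the convex set $A=\bigl\{\sum_{j=1}^N\|Tf_j\|_{\mathcal{B}_2}^p:\ \sum_{j=1}^N\|f_j\|_{\mathcal{B}_1}^p\leq 1\bigr\}$ of non-negative functions, which by hypothesis is bounded in $L^{s/p}(Y)$ with exponent $s/p<1$, and then runs a minimax (or iterative Rubio de Francia) argument to produce a single majorizing density $u^{-1}\in L^{s/p}(Y)$ of norm at most one against which every element of $A$ has uniformly bounded integral; taking $N=1$ gives the conclusion. None of this appears in your proposal. (For the record, the paper itself does not reprove this lemma either --- it is quoted from \cite{TorreaPM} --- but a blind proof of the statement must supply precisely this construction, not the application of the lemma to the maximal operator, which belongs to the proof of Proposition~\ref{blmx}.)
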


\begin{lemma}[Kolmogorov inequality, Theorem 3.3.1,  p. 59 in \cite{Guzman}]\label{ki}
Let $\Omega\subseteq \cX$ and $T$ be a sublinear operator maps \(L^p(\Omega, \mu)\) into weak \(L^p(\Omega, \nu)\) for some measures \(\mu, \nu\) and \(1 \leq p < \infty\). Then given $ 0<s<p$, there exists a constant $C$ such that for every subset $A \subset \Omega$ with $u(A):=\int_\Omega \chi_A(x)u(x)\,d\mu(x)<\infty$, we have
\begin{equation*}
\|Tf\|_{L^s(A, \mu)} \leq C \,\mu(A)^{\frac{1}{s}-\frac{1}{p}}\,\|f\|_{L^p(\Omega, \nu)}.
\end{equation*}
\end{lemma}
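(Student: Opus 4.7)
The plan is to reduce the problem to a one-dimensional distribution-function integral and then balance the two natural upper bounds on that distribution. By the layer-cake representation,
\begin{equation*}
\int_A |Tf(x)|^s\, d\mu(x) \;=\; s\int_0^\infty \lambda^{s-1}\, \mu\bigl(A\cap\{x\in\Omega:|Tf(x)|>\lambda\}\bigr)\, d\lambda.
\end{equation*}
Two estimates are available for the inner quantity. The trivial inclusion gives $\mu(A\cap\{|Tf|>\lambda\})\le\mu(A)$, and the weak-type hypothesis on $T$ yields $\mu(\{|Tf|>\lambda\})\le \bigl(C_0\|f\|_{L^p(\Omega,\nu)}/\lambda\bigr)^{p}$, where $C_0$ denotes the weak-type $(p,p)$ constant of $T$. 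The first bound is sharper for small $\lambda$, the second for large $\lambda$, so the natural strategy is to split the $\lambda$-integral at the crossover point.

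Concretely, I would set
\begin{equation*}
\lambda_0 \;:=\; \frac{C_0\,\|f\|_{L^p(\Omega,\nu)}}{\mu(A)^{1/p}},
\end{equation*}
the value at which the two bounds coincide, and estimate
\begin{align*}
\int_A |Tf|^s\,d\mu
&\le s\,\mu(A)\int_0^{\lambda_0}\lambda^{s-1}\,d\lambda + s\bigl(C_0\|f\|_{L^p(\Omega,\nu)}\bigr)^{p}\int_{\lambda_0}^\infty \lambda^{s-p-1}\,d\lambda\\
&=\mu(A)\,\lambda_0^s + \frac{s}{p-s}\bigl(C_0\|f\|_{L^p(\Omega,\nu)}\bigr)^{p}\lambda_0^{s-p}.
\end{align*}
The second integral converges precisely because $s<p$. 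Substituting $\lambda_0$ back, both terms collapse to a constant multiple of $\mu(A)^{1-s/p}\|f\|_{L^p(\Omega,\nu)}^{s}$; adding them gives $\tfrac{p}{p-s}C_0^{s}\|f\|_{L^p(\Omega,\nu)}^{s}\,\mu(A)^{1-s/p}$. Taking $s$-th roots delivers the stated inequality with constant $C=\bigl(\tfrac{p}{p-s}\bigr)^{1/s}C_0$.

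There is no essential obstacle; the only items to double-check are that $\lambda_0$ is finite and positive, and that the trivial and weak-type bounds are both legitimate on their respective regions. Finiteness of $\lambda_0$ uses $\mu(A)<\infty$, which is the role played by the hypothesis on the weighted mass of $A$; if $\|f\|_{L^p(\Omega,\nu)}=0$ the inequality is trivial. Sublinearity of $T$ is used only implicitly, to ensure that $Tf$ is measurable so that the distribution function makes sense.
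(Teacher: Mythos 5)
Your proof is correct and follows the standard layer-cake argument for Kolmogorov's inequality --- essentially the same proof as in the cited reference (de Guzm\'an), since the paper quotes this lemma without proving it. The only point worth flagging is that you have (sensibly) read the weak-type hypothesis as mapping $L^p(\Omega,\nu)$ into weak $L^p(\Omega,\mu)$, which is the reading consistent with the stated conclusion; the lemma as written swaps the roles of the two measures and introduces an undefined weight $u$, so your interpretation is the right one.
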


\begin{proof}[\textbf{Proof of Proposition \ref{blmx}}] Let $0 \in \cX$ be a fixed point and put $E_0=B(0, R)$ and 
$E_k= \{x \in \cX: 2^{k-1}R \leq  d(x,0) <2^k R \}$  for $k\geq 1.$ Then we may write $\cX= \bigcup_{k=0}^{\infty} E_k.$ 
For each $k$ fixed, we split $f=f'+f'',$ where $f'(x)= f(x) \chi_{B(0, R2^{k+1})}(x).$
Note that 
for $x\in E_k$ and $y \notin B(x_0, 2^{k+1}R)$ we have $d(y, x)>R2^k.$ Hence,
\begin{equation*}
   \mathcal{M}_R^{\cX} f''(x)=0 \quad \forall  x \in E_k. 
\end{equation*}
In order to find the desired weight  $u$ on $\cX$,  we first  invoke Lemma \ref{lm-r1} to show that 
\begin{equation}\label{ge}
\|\mathcal{M}^{\cX}_Rf\|_{L^p_{u_k}(E_k)}\leq C_{E_k} \|f\|_{L^p_{v}(\cX)}    
\end{equation}
for some $u_k$ supported on $E_k$ such that $\|u_k^{-1}\|_{L^{s/p}(E_k)} \leq 1$, for $0<s<1<p$.
To this end, the  idea is to  first  apply  the vector-valued weak type \((1,1)\) inequality for $ \mathcal{M}^{\cX}f (\geq \mathcal{M}_R^{\cX}f)$ (see \cite[Theorem 1.2]{Grafakos}) to satisfies the hypothesis of    Kolmogorov inequality (Lemma \ref{ki}). 
In fact, for $0<s<1<p$ and each $k,$ we  have 
\begin{align*}
   \left\|  \left(\sum_{j} |\mathcal{M}^{\cX}_R f_j|^p \right)^{1/p} \right\|_{L^s(E_k)}
   &\leq  \left\|  \left(\sum_{j} |\mathcal{M}^{\cX}_R f'_j|^p \right)^{1/p} \right\|_{L^s(E_k)} \\
    & \leq  C \left(\mu (E_k) \right)^{\frac{1}{s}-1} \left\|  \left(\sum_{j} | f'_j|^p \right)^{1/p} \right\|_{L^1(\cX)} \leq   \tilde{C}_{E_k} \left( \sum_j \|f_j\|^{p}_{L^p_v(\cX)} \right)^{1/p},
\end{align*}
where $\tilde{C}_{E_k}=C \left(\mu (E_k) \right)^{\frac{1}{s}-1} (\int_{E_k} v^{-p'/p} d\mu)^{1/p}. $ This satisfies the hypothesis of  Lemma \ref{lm-r1}.
 We are now ready to define  the weight \(u\) on $\cX:$  
\[
u(x) = \sum_{k=0}^\infty \frac{1}{2^{ k} \tilde{C}_{E_k}} u_k(x) \chi_{E_k}(x).
\]  
 Taking \eqref{ge} into account,  we have 
$\|\mathcal{M}_R^{\cX}f\|_{L^p_u(\cX)} \leq C \|f\|_{L^p_v(\cX)}.$\\
The case $p=1$, essentially follows by invoking the weak type \((1,1)\) inequality for $\mathcal{M}^{\cX}$ on $(\cX, \mu_k, d)$ (see\cite[Theorem 2.1]{MR499948}, \cite[Theorem 3.5]{MR447954}), where $\mu_{k}(d\mu(x))=u_k(x)\, d\mu(x)$ and \( u_k(x) = \chi_{E_k}(x) \).
In fact, we have 
\begin{align*}
\mu_{k}\{x \in E_k : \mathcal{M}^\cX_R f(x) > \lambda\} 
& \leq \mu_{k}\{x \in E_k : \mathcal{M}^\cX_R f^{\prime}(x) > \lambda\}  \leq \mu_{k}\{x \in E_k : \mathcal{M}^\cX f^{\prime}(x) > \lambda\} \\
& \leq \frac{C}{\lambda} \|f^{\prime}\|_{L^1({\cX})}  \leq \frac{\widetilde{C}_k^1}{\lambda} \|f^{\prime}\|_{L^1_v(\cX)}  \leq \frac{\widetilde{C}_k^1}{\lambda} \|f\|_{L^1_v(\cX)},
\end{align*}  
where \(\widetilde{C}_k^1 = C \|v^{-1}(\cdot) \chi_{E_k}(\cdot)\|_{L^\infty(\cX)}\).  
Now taking  weight \( u \) on $\cX$ as  
\[
u(x) = \sum_{k=0}^\infty \frac{1}{2^k \widetilde{C}_k^1} \chi_{E_k}(x),
\]  
 gives the desired inequality.

Conversely, assume that \(\mathcal{A}_R\) maps \( L^p_v(\cX) \) into weak \( L^p_u(\cX) \). Since $\mu_v(B(0, r))<\infty$ for all
$r>0$, we have $\chi_{B(0, r)}\in L^p_v(\cX)$. Then it is easily seen that $\mu_u(B(0, r))<\infty$ for all $r>0$.

Let $x_0\in \cX$ and $R>0$. Then \( B\left(x_0, R/2\right) \subset B(x, R)\subset B(x_0, 2R) \) for \( x \in B\left(x_0, R/2\right) \).
For any nonnegative function \( f \in L^p_v(\cX) \),  we have 
\[
\frac{1}{\mu(B(x_0, 2R))} \int_{B\left(x_0, R/2\right)} f(y) \, d\mu(y)
\leq  \frac{1}{\mu(B(x, R))} \int_{B(x, R)} f(y) \ d\mu(y)=\mathcal{A}_R f(x)\]  for all  \( x \in B(x_0, R/2) \).  Integrating both side (after multiplying $u(x)$)  and then  using Lemma \ref{ki} for \( s < p \), we have  
\begin{align*}
& \mu_u\left(B\left(x_0, R/2\right)\right)^{\frac{1}{s}} \frac{1}{\mu(B(x_0, 2R))} \int_{B\left(x_0, R/2\right)} f(y) \, d\mu(y)\\  
 & \leq \left( \int_{B\left(x_0, R/2\right)} \mathcal{A}_R f(x)^s u(x) \, d\mu(x) \right)^{\frac{1}{s}}\\
 & \leq  \mu_u\left(B\left(x_0, R/2\right)\right)^{\frac{1}{s} - \frac{1}{p}} \left( \int_{\cX} f(x)^p v(x) \, d\mu(x) \right)^{\frac{1}{p}}.
\end{align*}
Putting  \( f = g v^{-\frac{1}{p}} \) where \( g \in L^p(\cX) \) in the above inequality, we get
\[
\int_{B\left(x_0, R/2\right)} g(x) v^{-\frac{1}{p}}(x) \, d\mu(x) \lesssim \mu (B(x_0,2R))\, \mu_u\left(B\left(x_0, R/2\right)\right)^{-\frac{1}{p}} \left( \int_{\cX} g(x)^p \, d\mu(x) \right)^{\frac{1}{p}}.
\]  
By duality, it follows that \( v^{-\frac{1}{p}} \in L^{p'}\left(B\left(x_0, R/2\right)\right) \). Hence, we conclude that \( v^{-\frac{1}{p}} \in L^{p'}_{\text{loc}}(\cX) \), as required.  
\end{proof}

\begin{comment}
The   \textit{\textbf{infinitesimal generator \(\mathcal{L}\)} } of $\{S_t\}_{t>0}$ is given by 
\begin{equation*}\label{Laplacian}
    \mathcal{L} u := \lim_{t \to 0} \frac{S_t u - u}{t},
\end{equation*}  
where the limit is taken in the \(L^2\)-norm. It is natural to refer to $\mathcal{L}$ as the \textit{\textbf{Laplace operator}} of the heat kernel $\varphi_t$. The \textit{domain } $D(\mathcal{L})$ of the operator $\mathcal{L}$ is the space of functions $u\in L^2$ for which the above limit  exists. Since   $\{S_t\}$ is a strongly continuous contraction semigroup in $L^2$, by the Hille-Yosida theorem,  the domain $D(\mathcal{L})$ is dense in $L^2$. Furthermore, $\mathcal{L}$ is a selfadjoint, non-positive-definite operator, which follows from the fact that $S_t$ is selfadjoint and contractive. See \cite[Theorem 1, p.237]{yosida} and \cite{Hille} for details.

========\\
\textcolor{blue}{
  Further, the semigroup \(\{S_t\}\) can be recovered in the sense of spectral theory from \(\mathcal{L}\) via the relation  
\[
S_t = \exp(-t\mathcal{L}).
\]}  

\end{comment}

\section{Proof of Theorem \ref{T-1.3}}\label{proof}
\begin{proof}[Proof of Theorem \ref{T-1.3}]
We start by showing \hyperlink{1}{(1)} $\Rightarrow$ \hyperlink{2}{(2)}.
From the Assumption~\ref{Assump-1.2} (i), the limit  
\[
\lim_{t \to 0^+} u(x, t) = f(x)
\]  
holds everywhere for $f\in C_c(\cX)$. Let  $f \in L^p_v(\cX)$ , where \( 1 \leq p < \infty \).    
Define  
\[
\Phi f(x) = \left| \limsup_{t \to 0^+} u(x, t) - \liminf_{t \to 0^+} u(x, t) \right|.
\]  
By our assertion,  if \( f \) is continuous and compactly supported, we have \( \Phi f(x) = 0 \).  
Now, if \( f \in L^p_v(\cX) \), the boundedness of \( H^*_R \) from \( L^p_v(\cX) \) to weak \( L^p_u(\cX) \) implies  
\[
\mu_u \{ x : 2 H^*_R f(x) > \lambda \}^{\frac{1}{p}}  \leq \frac{2C}{\lambda} \| f \|_{L^p_v(\cX)},
\]
where $\mu_u$ denotes the measure with respect to the weight function $u$, that is, $\mu_u(dx)=u(x) d\mu(x)$ and $\lambda>0.$
Again, we have  
\[
\Phi f(x) \leq 2 H^*_R f(x),
\]  
which gives  
\[
\mu_u \{ x : \Phi f(x) > \lambda \}^{\frac{1}{p}}  \leq \frac{2C}{\lambda} \| f \|_{L^p_v(\cX)}.
\]  
Since compactly supported continuous functions are dense in 
$L^p_v(\cX)$ (see \cite[Theorem~13.9]{Ali-Bor}), we can decompose \( f = f_1 + f_2 \), where \( f_1 \) is compactly supported and continuous, and \( \| f_2 \|_{L^p_v(\cX)} \) is arbitrarily small. For such a decomposition, \( \Phi f(x) \leq \Phi f_1(x) + \Phi f_2(x) \), and \( \Phi f_1(x) = 0 \). Therefore,  
\[
\mu_u \{ x : \Phi f(x) > \lambda \}^{\frac{1}{p}}  \leq \frac{2C}{\lambda} \| f_2 \|_{L^p_v(\cX)}.
\]  
Since \( \| f_2 \|_{L^p_v(\cX)} \) can be made arbitrarily small, it follows that \( \Phi f(x) = 0 \) almost everywhere. Consequently,  
\[
\lim_{t \to 0^+} u(x, t)=f(x)
\]  
$\mu$-almost everywhere. Since the operator \( H^*_R f \) maps \( L_v^p(\cX) \) into weak \( L_u^p(\cX) \) for \( p \geq 1 \), there exists some \( x_0 \in \cX \) such that  
$\int_{\cX} \varphi_{R}(x_0, y) |f(y)| \, d\mu(y) < \infty.$
Using Assumption \ref{Assump-1.2} (ii), it follows that
\[
\int_{\cX} \varphi_{\eta R}(x, y) |f(y)| \, d\mu(y) < \infty
\]  
for all \( x \in \cX \). This proves \hyperlink{2}{(2)}.

It is clear that \(\hyperlink{2}{(2)}\) implies \(\hyperlink{3}{(3)}\). Now assume that \hyperlink{3}{(3)} holds. Then, the mapping  
\[
f \mapsto \int_{\cX} \varphi_{R}(x, y) f(y) \, d\mu(y) = \int_{\cX} \varphi_{R}(x, y) v^{-1/p}(y) f(y) v^{1/p}(y) \, d\mu(y)
\]  
is well-defined for every \( f \in L^p_v(\cX) \) and some \( x_0 \in \cX \).  
By the duality principle, the function  
\[
y \mapsto \varphi_{R}(x_0, y) v^{-1/p}(y)
\]  
belongs to \( L^{p'}(\cX) \) for some  \( x_0 \in \cX \). This gives the statement \hyperlink{4}{(4)}.

Assume that \hyperlink{4}{(4)} holds. Therefore, we can find
$x_0\in \cX$ and $t_0\in (0, T)$ such that
$$\norm{ v^{-\frac{1}{p}}\varphi_{t_0}(x_0, \cdot)}_{L^{p'}(\cX)}
<\infty.
$$
Using Assumption~\ref{Assump-1.2} and letting $R=\eta t_0$ we get that
$$
\norm{ v^{-\frac{1}{p}}\varphi_{R}(x, \cdot)}_{L^{p'}(\cX)}
<\infty\quad \text{for all}\; x\in\cX.
$$
By considering the positive and negative parts of \( f \), we may assume \( f \geq 0 \). From Assumption \ref{Assump-1.2} (iii), we have  
\[
H^*_{R_1} f(x) \leq \xi_1(x, R_1) \mathcal{M}_{\Gamma(R_1)} f(x) + 
\xi_2(x, R_1) \int_{\cX} \varphi_{R}(x, y) f(y) \, d\mu(y), \quad \text{for } x \in \cX,
\]  
where $R_1=R/\gamma$, and \( \xi_1, \xi_2 \geq 1 \) are locally bounded functions.  

Since \( v \in D_p \), we have  \( v^{-\frac{1}{p}} \in L_{\text{loc}}^{p'}(\cX) \).    Thus, by Proposition \ref{blmx}, there exists a weight \( u_1 \) such that the local maximal operator $\mathcal{M}_{\Gamma(R_1)}$ maps \( L^p_v(\cX) \) into \( L^p_{u_1}(\cX) \) for \( p > 1 \), and maps \( L^1_v(\cX) \) into weak \( L^1_{u_1}(\cX) \) when \( p = 1 \). 
Again, using H\"{o}lder's inequality, we note that 
\begin{equation}\label{equ-1}
\int_{\cX} \left| \int_{\cX} \varphi_R(x, y) f(y) \, d\mu(y) \right|^p u_2(x) \, d\mu(x)  
\leq \int_{\cX} |f(y)|^p v(y) \, d\mu(y) \cdot \int_{\cX} \|v^{-\frac{1}{p}}(\cdot) \varphi_R(x, \cdot)\|^p_{L^{p'}(\cX)} u_2(x) \, d\mu(x).
\end{equation}
Since  
\[
\|v^{-\frac{1}{p}}(\cdot) \varphi_R(x, \cdot)\|_{L^{p'}(\cX)}
\]  
is finite for all \( x \in \cX \), we can choose $u_2$ suitably to  ensure that the right-hand side of \eqref{equ-1} is finite. In particular, the function
$$ f\to \int_{\cX} \varphi_{R}(x, y) f(y) \, d\mu(y)$$
maps $L^p_v(\cX)$ into $L^p_{u_2}(\cX)$ for $p>1$ and $L^p_v(\cX)$ into
weak $L^p_{u_2}(\cX)$ for $p=1$.

Let $p>1$. Then letting $u=\min\{(\xi_1(\cdot, R_1))^{-p}u_1, 
(\xi_2(\cdot, R_1))^{-p}u_2\}$, we
see that $H^*_{R_1}$ maps $L^p_v(\cX)$ to $L^p_u(\cX)$. Now let $p=1$.  Define
$D_1=\{d(0,x)\leq 1\}$, $D_{i}=\{d(0,x)\leq i\}\setminus D_{i-1}$ for $i\geq 2$. Then $\cX=\cup D_i$.
We define a weight function 
$$w_1(x)=\sum_{k\geq 1} \frac{u_1(x)}{2^k\beta_k} \chi_{D_k},
\quad \text{where} \quad \beta_k=\sup\{\xi_1(x, R_1)\; :\; x\in D_k\}.$$
Then, for $\lambda>0$,
\begin{align*}
\mu_{w_1}\{x\; :\; \xi_1(x, R_1) \mathcal{M}_{\Gamma(R_1)}f(x)>\lambda\}
&\leq \sum_{i} \mu_{w_1}\{x\in D_i\; :\; 
\xi_1(x, R_1) \mathcal{M}_{\Gamma(R_1)}f(x)>\lambda\}
\\
&\leq  \sum_{i} \mu_{w_1}\{x\in D_i\; :\; 
 \mathcal{M}_{\Gamma(R_1)}f(x)>\frac{\lambda}{\beta_i}\}
\\
&\leq  \sum_{i} \frac{1}{\beta_i 2^i}\mu_{u_1}\{x\in D_i\; :\; 
 \mathcal{M}_{\Gamma(R_1)}f(x)>\frac{\lambda}{\beta_i}\}
\\
&\leq \sum_{i} \frac{1}{\lambda 2^i} \norm{f}_{L^1_v(\cX)}.
\end{align*}
Thus, $f\mapsto \xi_1(x) \mathcal{M}_{\Gamma(R_1)}f(x)$ maps
$L^1_v(\cX)$ to weak $L^1_{w_1}(\cX)$. Similarly, we can find a weight 
$w_2$ so that the function
$$ f\to \xi_2(x, R)\int_{\cX} \varphi_{R}(x, y) f(y) \, d\mu(y)$$
maps $L^1_v(\cX)$ into $L^p_{w_2}(\cX)$. Letting $u=w_1\wedge w_2$, 
 we conclude that the operator \( H^*_{R_1} f \) maps \( L^1_v (\cX)\) into weak \( L^1_u (\cX)\). This gives us \hyperlink{1}{(1)}.
\end{proof}

\section{Examples}\label{examples}
In this section, we produce a large family of operators satisfying Assumption~\ref{Assump-1.2}. Our first example 
is a gradient perturbation of the Laplacian.
\begin{example}[gradient perturbation of $\Delta$]\label{Eg-2.1}
Consider a measurable function $b:\Rn\to\Rn$ such that $\abs{b}\in L^p(\Rn)$ for some $p>\max\{n, 2\}$. Define
$$\mathcal{L} u = \Delta u + b(x)\cdot \nabla u.$$
Then there exists a heat kernel $\{\varphi_t\}$ such that for $f\in L^2_{\rm loc}(\Rn)$ the weak solution of \eqref{AB1}
attains the following representation \cite[Theorem~11]{Aronson}
$$ u(x, t)=S_t f(x):=\int_{\Rn} \varphi_t(x, y) f(y)\, dy.$$
Furthermore, for every $T>0$, there exist positive constants $\alpha_1 \geq \alpha_2$ and $ C$ such that
\begin{equation}\label{E2.1}
\frac{t^{-\frac{n}{2}}}{C} e^{-\frac{|x-y|^2}{\alpha_2 t}}\leq \varphi_t(x, y)\leq C t^{-\frac{n}{2}} e^{-\frac{|x-y|^2}{\alpha_1 t}}\quad \text{for all }\; (x, y, t)\in \Rn\times\Rn\times(0, T].
\end{equation}
From the uniqueness of the solution, it is also evident that 
$$\int_{\Rn} \varphi_t(x, y)\, dy=1\quad \text{for all}\; x\in\Rn.$$
Combining \eqref{E2.1} with the above fact it is easily seen that Assumption~\ref{Assump-1.2}(i) holds. Letting $\eta=\frac{\alpha_2}{4\alpha_1}$, we get from \cite[Proof of Prop.~2.1]{vivianiPAMS}
and \eqref{E2.1} that
$$\varphi_{\eta t}(x_1, y)\leq C t^{-\frac{n}{2}} 
e^{-\frac{4|x_1-y|^2}{\alpha_2 t}}\leq C_{x_1, x_2, t}\varphi_t(x_2, y),$$
for $t\leq T$, where the constant $C_{x_1, x_2, t}$ depends only on $x_1, x_2$ and $t$.
Again, since the heat kernel is comparable to the 
fundamental solution of a classical heat operator,
we can easily mimic the proof of \cite[page~1332]{vivianiPAMS} to see that Assumption~\ref{Assump-1.2}(iii) also holds.
See also the proof in Example~\ref{Eg9.6}.
\begin{remark}
 It is possible to extend the set of drifts $b$ to a certain Kato class for which \eqref{E2.1} holds (cf. \cite{Zhang}).   
\end{remark}
\end{example}
\subsection{Heat  semigroups in Probability and Geometry}

Our next example corresponds to pure jump L\'evy processes.
\begin{example}\label{Eg2.2}
Consider an isotropic, unimodal L\'evy process $X=(X_t, \; t\geq 0)$ on $\Rn$. That is, $X$ is a c\`adl\`ag stochastic process with distribution $\mathbb{P}$, $X_0=0$,
the increments of $X$ are independent with rotationally invariant and radially non-increasing density function $p_t(x)$ in $\Rn\setminus\{0\}$. Furthermore,
the following L\'evy-Khintchine formula holds for $\xi\in\Rn$,
$$\mathbb{E}[e^{i\xi\cdot X_t}]=\int_{\Rn} e^{i\xi\cdot y} p_t(y) dy= e^{-t\psi(\xi)},\quad \text{where}\quad
\psi(\xi)=\int_{\Rn} (1-\cos(\xi\cdot x))\, \nu(dx),$$
where $\nu$ is an isotropic unimodal L\'evy measure. Here $\mathbb{E}$ denotes the expectation operator with respect to $\mathbb{P}$. 
The associated semigroup $\{S_t\}$, defined as $S_t f(x)=\mathbb{E}[f(X_t+x)]$, is a convolution semigroup associated to $X$. Furthermore,
$S_t |_{C^\infty_c(\Rn)}$ can be extended to a strongly continuous semigroup on $L^p(\Rn)$ for $p\in [1, \infty)$ (cf. \cite[Chapter 4]{Jacob}).
The infinitesimal generator $\mathcal{L}$ of this semigroup, restricted to $C^\infty_c(\Rn)$, coincides with the pseudo-differential operator $-\psi(D)$ with symbol $\psi$, given by
$$-\psi(D)u(x)=-\int_{\Rn} e^{i\xi\cdot x} \psi(\xi) \widehat{u}(\xi)\, d\xi\quad u\in C^\infty_c(\Rn),$$
where 
$$\widehat{u}(\xi)=\frac{1}{(2\pi)^n}\int_{\Rn} e^{-i\xi\cdot x} u(x) \, dx.$$
In the $L^2$ setting, we can have a Dirichlet form associated to $\psi$ which is also used to define weak solutions for these operators.
From the unimodality of the L\'evy process, we see that $\psi$ is radial. Thus, we let $\psi(\xi)=\psi(\abs{\xi})$ for some function $\psi:[0, \infty)\to [0, \infty)$. We suppose that
$\psi$ is continuous and strictly increasing satisfying the following weak scaling properties: there exist positive numbers $0<\underline{\alpha}\leq \bar\alpha<2$, $\underline{c}\in (0,1]$ and
$\bar{C}\in [1, \infty)$ such that
\begin{equation}\label{E2.2}
\underline{c} \lambda^{\underline{\alpha}}\, \psi(\theta)\leq \psi(\lambda \theta) \leq \bar{C} \lambda^{\bar{\alpha}}\, \psi(\theta)\quad \text{for}\; \lambda\geq 1, \; \theta>0.
\end{equation}
Some standard examples of operators satisfying \eqref{E2.2} are:
\begin{itemize}
\item[(i)] The \textit{\textbf{fractional Laplacian}} where $\psi(\theta)=\theta^{2s}$ for $s\in (0,1)$. In this case, $\underline{\alpha}=\bar{\alpha}=2s$.
\item[(ii)] The \textit{\textbf{mixed fractional operator}} where $\psi(\theta)=\theta^{2s_1} + \theta^{2s_2}$ for $0<s_1<s_2<1$. In this case, $\underline{\alpha}=2s_1$ and $\bar{\alpha}=2s_2$.
\end{itemize}
Many other examples of operators satisfying \eqref{E2.2} can be found in \cite[Section 15]{SSV}, \cite[Section~4.1]{Bogdan}. Under the above hypothesis, it follows
from \cite[Corollary~23]{Bogdan} that
\begin{equation}\label{E2.3}
 \frac{1}{C} \left([\psi^{-}(1/t)]^n\wedge \frac{t\psi(|x|^{-1})}{|x|^n}\right)\leq p_t(x)\leq 
 C \left([\psi^{-}(1/t)]^n\wedge \frac{t\psi(|x|^{-1})}{|x|^n}\right)\quad \text{for}\; x\neq 0, \; t>0,
\end{equation}
for some constant $C$, where $\psi^{-}$ denotes the inverse of $\psi$. In this case, the heat kernel is given by \[\varphi_t(x, y)=p_t(x-y).\]
\end{example}

\begin{lemma}\label{L2.3}
The heat kernel $\{\varphi_t\}$ given by Example~\ref{Eg2.2} satisfies Assumption~\ref{Assump-1.2}.
\end{lemma}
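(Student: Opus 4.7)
The plan is to verify the three parts of Assumption~\ref{Assump-1.2} for the translation-invariant kernel $\varphi_t(x,y)=p_t(x-y)$ of Example~\ref{Eg2.2} in turn, leveraging the two-sided estimate~\eqref{E2.3} and the weak scaling~\eqref{E2.2} throughout. Part~(i) reduces to a standard approximate-identity argument: the conservativeness $\int_{\Rn} p_t\,dy = 1$ together with the tail bound $p_t(z) \leq C\,t\,\psi(|z|^{-1})/|z|^n$ implies $\int_{|z|\geq \delta} p_t(z)\,dz \to 0$ as $t\to 0$ (using the lower scaling in~\eqref{E2.2} to control $\int_\delta^{\infty}\psi(1/r)/r\,dr<\infty$), from which $S_tf(x)\to f(x)$ for every $f\in C_c(\Rn)$ and every $x\in\Rn$ follows by splitting $|S_tf(x)-f(x)|$ into the $|y-x|<\delta$ piece (controlled by uniform continuity of $f$) and the $|y-x|\geq\delta$ piece (controlled by the tail).

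For part~(ii) I take $\eta=1$. Writing $z_i:=x_i-y$, we have $|z_1-z_2|=|x_1-x_2|$ fixed, and~\eqref{E2.3} identifies $p_R(z)$ up to multiplicative constants with
\[
G_R(z):=[\psi^{-}(1/R)]^n\wedge \frac{R\,\psi(|z|^{-1})}{|z|^n},
\]
which is continuous, strictly positive and radially decreasing. A direct check using~\eqref{E2.2} shows that $G_R(z_1)/G_R(z_2)$ is bounded by a constant depending only on $R$ and $|x_1-x_2|$: in the regime $|z_i|\leq r_R:=1/\psi^{-}(1/R)$ both values equal $[\psi^{-}(1/R)]^n$, for $|z_i|\gg |x_1-x_2|$ the ratio of the tail terms is controlled by doubling of $\psi$ (a consequence of~\eqref{E2.2}), and the transition region is compact. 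The uniform positive lower bound on any given ball of radius $R$ is immediate from the lower half of~\eqref{E2.3}.

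Part~(iii) is the heart of the matter. Fix $R\in (0,T)$ and set $\gamma=1$, $\Gamma(R):=r_R=1/\psi^{-}(1/R)$. For $t\in(0,R)$, write $r_t:=1/\psi^{-}(1/t)\leq \Gamma(R)$ and decompose
\[
\int_{\Rn} p_t(x-y)|f(y)|\,dy \;=\; \int_{B(x,r_t)} \,+\, \sum_{k\geq 1}\int_{A_k} \,+\, \int_{|y-x|>\Gamma(R)},
\]
where $A_k=\{2^{k-1}r_t<|y-x|\leq 2^k r_t\}$ and the sum runs over $k$ with $2^{k-1}r_t\leq\Gamma(R)$ (any residual slice near $|y-x|=\Gamma(R)$ absorbs into the last annulus at the cost of a factor $2^n$). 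On $B(x,r_t)$ I use $p_t\leq C[\psi^{-}(1/t)]^n=C/r_t^n$, which yields at most $C\mathcal{M}_{\Gamma(R)}f(x)$. On $A_k$ I combine $p_t(x-y)\leq C\,t\,\psi(|x-y|^{-1})/|x-y|^n$ with the lower scaling
\[
\psi(2^{-(k-1)}/r_t)\;\leq\; \frac{1}{\underline{c}}\,2^{-(k-1)\underline{\alpha}}\,\psi(1/r_t)\;=\;\frac{1}{\underline{c}}\,2^{-(k-1)\underline{\alpha}}/t,
\]
obtaining $\int_{A_k}p_t|f|\leq C\,2^{-(k-1)\underline{\alpha}}\,\mathcal{M}_{\Gamma(R)}f(x)$, which is geometrically summable since $\underline{\alpha}>0$. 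For the tail, $|y-x|>\Gamma(R)$ places both $p_t$ and $p_R$ into their tail regimes, so~\eqref{E2.3} gives $p_t(x-y)\leq C'(t/R)p_R(x-y)\leq C'p_R(x-y)$, bounding the tail by $C'\int\varphi_R(x,y)|f(y)|\,dy$. Taking the supremum over $t\in(0,R)$ yields the required inequality with $\xi_1,\xi_2$ constants independent of $x$ and $R$.

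The main obstacle is the annular step of part~(iii): one must marry the dyadic decomposition with the scaling~\eqref{E2.2} to extract the geometric decay $2^{-(k-1)\underline{\alpha}}$ from the tail of $p_t$, precisely the ``careful and appropriate choice of annuli'' the introduction flags as the key new difficulty compared with the Gaussian case.
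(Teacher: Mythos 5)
Your proposal is correct and follows the same overall strategy as the paper: verify (i)--(iii) from the two-sided bound \eqref{E2.3} and the scaling \eqref{E2.2}, and for (iii) split at the space scale $\phi(R)=1/\psi^{-}(1/R)$, control the near part by dyadic annuli plus the local maximal function, and dominate the far part by $p_R$ exactly as you do. There are two minor variations worth noting. First, for (i) the paper simply invokes the L\'evy continuity theorem, whereas you run the approximate-identity argument by hand using the tail bound and the lower scaling to get $\int_{|z|\ge\delta}p_t\to 0$; both work. Second, your annuli are spatially dyadic, $\{2^{k-1}\phi(t)<|x-y|\le 2^{k}\phi(t)\}$, and the geometric decay $2^{-(k-1)\underline{\alpha}}$ comes from the lower scaling of $\psi$ applied in the space variable, while the paper's annuli are temporally dyadic, $\{\phi(2^{k-1}t)<|x-y|\le\phi(2^{k}t)\}$, so that $t\,\psi(r_k^{-1})=2^{-k}$ gives the decay for free and the lower scaling is instead used to bound the volume ratio via $\psi^{-}(2\theta)\le\lambda\psi^{-}(\theta)$; the two bookkeepings are interchangeable. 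One small point of care: with your choice $\Gamma(R)=\phi(R)$ the outermost annulus produces a ball of radius up to $2\phi(R)$, which $\mathcal{M}_{\Gamma(R)}$ does not see; rather than ``absorbing a residual slice,'' the clean fix is to take $\Gamma(R)=2\phi(R)$ (the paper takes $\Gamma(R)=\phi(2R)$ for the same reason).
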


\begin{proof}
Since $p_t$ is a transition density for $t>0$ and its characteristic function converges to $1$, by L\'evy continuity theorem,
we get
$$\lim_{t\to 0} \int_{\Rn} \varphi_t (x, y) f(y) dy=\lim_{t\to 0}\int_{\Rn} f(x-y) p_t(y)dy=f(x)$$
for every bounded continuous function $f$. This gives us Assumption~\ref{Assump-1.2}(i). To see Assumption~\ref{Assump-1.2}(ii),
we take $x_1, x_2\in\Rn$. For any $y$ satisfying $|x_1-y|\leq 2|x_1-x_2|$ we obtain  from \eqref{E2.3} that 
$$\frac{1}{\kappa_R(x_1, x_2)}\leq \varphi_R(x_1, y),~ \varphi_R(x_2, y)\leq \kappa_R(x_1, x_2)$$
for some constant $\kappa_R$. Since $|x_2-y|\leq |x_1-x_2|\Rightarrow |x_1-y|\leq 2|x_2-x_1|$, $\varphi_R(x_1, y)$ and $\varphi_R(x_2, y)$
are comparable. Consider $|x_2-y|\geq |x_1-x_2|\Rightarrow |x_1-y|\leq 2|x_2-y|$. Using \eqref{E2.2} it then follows that
\begin{align*}
 \varphi_R(x_2, y) &\leq C \left([\psi^{-}(1/R)]^n\wedge \frac{R\psi(|x_2-y|^{-1})}{|x_2-y|^n}\right)
 \\
 &\leq C \left([\psi^{-}(1/R)]^n\wedge \frac{2^n R\psi( 2|x_1-y|^{-1})}{|x_1-y|^n}\right)
 \\
 &\leq 2^{n+\bar{\alpha}} C\bar{C} \left([\psi^{-}(1/R)]^n\wedge \frac{R\psi( |x_1-y|^{-1})}{|x_1-y|^n}\right)\leq 2^{n+\bar{\alpha}} C^2\bar{C} \varphi_R(x_1, y). 
 \end{align*}
Thus Assumption~\ref{Assump-1.2}(ii) holds with $\eta=1$.

It remains to verify Assumption~\ref{Assump-1.2}(iii). We define $\phi(r)=\frac{1}{\psi^{-}(r^{-1})}$. By our assumption, $\phi$ is a strictly increasing function.
Fix $R>0$, and let 
$$p_t (x) = p_t(x) \chi_{\{|x|\leq \phi(R)\}} + p_t(x) \chi_{\{|x|> \phi(R)\}}:= W^1_t(x) + W^2_t(x).$$
For $t\in (0, R)$, choose $j\in\mathbb{N}\cup\{0\}$ so that $2^{j} t< R\leq 2^{j+1} t$. Define $r_j=\phi(t2^j)$. Then
\begin{equation}\label{E2.4}
p_t(x) \leq p_t(x) \chi_{\{|x|\leq r_{0}\}} + \sum_{k\leq j+1} p_t(x) \chi_{\{r_{k-1}<|x|\leq r_{k}\}}.
\end{equation}
Since $\psi(\frac{1}{r_k})=\frac{1}{t2^k}$, we obtain from \eqref{E2.3} that
$$p_t(x) \chi_{\{|x|\leq r_{0}\}}\leq C \frac{1}{r^n_0} \chi_{\{|x|\leq r_{0}\}}$$
and
\begin{align*}
 p_t(x) \chi_{\{r_{k-1}<|x|\leq r_{k}\}} &\leq C t \frac{\psi(r^{-1}_k)}{r_k^n}\chi_{\{|x|\leq r_{k+1}\}}
 \\
& = C \frac{r^{n}_{k+1}}{2^k r^{n}_{k}}\frac{1}{r_{k+1}^n}\chi_{\{|x|\leq r_{k+1}\}}
\leq \frac{C}{2^k}\, \left[\sup_{\theta>0}\frac{\psi^{-}(2\theta)}{\psi^{-}(\theta)}\right]^n \frac{1}{r_{k+1}^n}\chi_{\{|x|\leq r_{k+1}\}},
\end{align*}
for some constant $C.$ If we choose $\lambda>1$ so that $\underline{c}\lambda^{\underline{\alpha}}\geq 2$, then applying \eqref{E2.2} we have
$$\psi(\lambda \psi^{-}(\theta))\geq \underline{c}\lambda^{\underline{\alpha}} \theta\geq 2\theta
\Rightarrow \psi^{-}(2\theta)\leq \lambda \psi^{-}(\theta)\quad \text{for all}\; \theta>0.$$
Thus, letting $\Gamma(r)=\phi(2r)$ and combining the above estimate we obtain
$$\sup_{t\leq R} \int_{\Rn} W^1_t(x-y) f(y) dy\leq C_1 \mathcal{M}_{\Gamma(R)} f(x)$$
for all $f\geq 0$, where $C_1= C (1+\lambda^n \sum_{k\geq 1} 2^{-k} )$. On the other hand,
for $r>\phi(R)$ we have
$$R \psi(r^{-1})\leq R \psi\left(\frac{1}{\phi(R)}\right)= R\psi(\psi^{-}(R^{-1}))=1,$$
and $r^{-1}\leq \frac{1}{\phi(R)}=\psi^{-}(R^{-1})$. Thus, for $|x|>\phi(R)$ and $t\leq R$, we have
$$[\psi^{-}(1/t)]^n\geq [\psi^{-}(1/R)]^n\geq r^{-n} \geq \frac{R\psi(r^{-1})}{r^n},$$
giving us
$$\sup_{t\leq R} \int_{\Rn}W^2_t (x-y) f(y)\, dy\leq \int_{\Rn} p_R(x-y) f(y) dy$$
for $f\geq 0$. Hence, Assumption~\ref{Assump-1.2}(iii) holds with $\gamma=1$.
\end{proof}

\begin{remark}
Let $\omega(x)= 1\wedge \frac{\psi(|x|^{-1})}{|x|^n}$. In view of \eqref{E2.3} we see that, in the setting of Example~\ref{Eg2.2},
we have $D_p=\{v>0\; :\; v^{-\frac{1}{p}}\in L^{p'}_\omega(\Rn)\}$.
\end{remark}

\begin{remark}[Non-tangential convergence in nonlocal setting]\label{R-nontan}
The calculation of Lemma~\ref{L2.3} also gives us {\it \textbf{non-tangential}} convergence for the nonlocal operators. Gaussian analogue of this notion can be found in \cite{Non-tangential}
(see also \cite[Section 7.2]{torreaTAMS}).
More precisely,
if we set 
$$\Gamma^p(x)=\{(y, t)\in\Rn\times \mathbb{R}_+\; :\; |x-y|\leq \phi(t)\},$$
which is the nonlocal analogue of parabolic Gaussian cone,
for $\phi(r)=\frac{1}{\psi^{-}(r^{-1})}$, then for some constant $C$, we would have
\begin{equation}\label{nontan}
\sup_{(y, t)\in \Gamma^p(x)} \left|\int_{\Rn} p_t(y-z)f(z) \,dz \right| \leq C \mathcal{M} f(x) \quad \text{for all}\; f\geq 0.
\end{equation}
From \eqref{nontan} it can be easily seen that 
 $$\lim_{(y, t)\in \Gamma^p(x),\, t\to 0} \int_{\Rn} p_t(y-z)f(z)\, dz = f(x) \quad \text{almost surely, for all}\; f\in L^1(\Rn).$$ 
Now to see \eqref{nontan}, we fix $\lambda>1$ so that $\phi(\lambda r)\geq 2\phi(r)$ for all $r>0$. This can be easily checked from
\eqref{E2.2}. Next, for any $t>0$, define $r_j=\phi(\lambda^{j+1} t)$ for $j=0, 1, 2, \ldots$. Write
\begin{equation*}
p_t(y-z)= p_t(y-z)\chi_{\{|x-z|\leq r_0\}} + \sum_{j=1}^\infty p_t(y-z)\chi_{\{r_{j-1}< |x-z|\leq r_j \}}.
\end{equation*}
For $r_{j-1}< |x-z|\leq r_j$, we have 
$$|y-z|\geq |x-z|-|y-x|\geq \phi(\lambda^{j} t) - \phi(t)\geq  \phi(\lambda^{j} t) - \phi(\lambda^{j-1} t)\geq \phi(\lambda^{j-1} t).$$
Thus,
\begin{align*}
p_t(y-z)\chi_{\{r_{j-1}< |x-z|\leq r_j \}}\lesssim \frac{1}{\lambda^{j-1}}\left[\frac{\phi(\lambda^{j+1} t)}{\phi(\lambda^{j-1} t)}\right]^n
\frac{1}{r^n_{j}}\chi_{\{|x-z|\leq r_j \}}
\lesssim \frac{1}{\lambda^{j-1}} \left[\sup_{\theta>0}\frac{\psi^{-}(\lambda^{2}\theta)}{\psi^{-}(\theta)}\right]^n \frac{1}{r^n_{j}}\chi_{\{|x-z|\leq r_j \}}.
\end{align*}
Now following the calculations of Lemma~\ref{L2.3} we get \eqref{nontan}.
\end{remark}

Our next example concerns L\'evy processes with a diffusion part. This corresponds to integro-differential
operators with both local and nonlocal parts.

\begin{example}\label{Eg2.5}
Let $X$ be the L\'evy process given by Example~\ref{Eg2.2} and $(B_t\, , t\geq 0)$ be the Brownian motion, independent of $X$, running 
twice as fast as standard $n$-dimensional Brownian motion. We consider the super-imposed L\'evy process $Y_t=B_t+X_t$. Then 
the corresponding L\'evy symbol is given by $\tilde\psi(\xi)=|\xi|^2+\psi(\xi)$. That is
$$\mathbb{E}[e^{i\xi\cdot Y_t}]=\int_{\Rn} e^{i\xi\cdot y} \tilde{p}_t(y) dy= e^{-t\tilde\psi(\xi)},$$
where $\tilde{p}_t$ denotes the transition density function of $Y_t$. The generator of the corresponding semigroup coincides with
$\Delta-\psi(D)$ on $C^\infty_c(\Rn)$ where $-\psi(D)$ is the pseudo-differential operator given by Example~\ref{Eg2.2}. As before,
the heat kernel is given by $\varphi_t(x, y)=\tilde{p}_t(x-y)$.

Let $\hat{p}_t(x)=\frac{1}{(4\pi t)^{\frac{n}{2}}} e^{-\frac{|x|^2}{4t}}$ be the transition density of $B_t$. Then using the independence of
$X$ and $B$ it follows that 
\begin{equation}\label{E2.5}
\tilde{p}_t(x)=\hat{p}_t\ast p_t(x)=\int_{\Rn} \hat{p}_t(y) p_t(x-y) \, dy=\int_{\Rn} \hat{p}_t(x-y) p_t(y) \, dy,
\end{equation}
where $p_t$ is the transition density of the nonlocal component from Example~\ref{Eg2.2}.
\end{example}

\begin{lemma}\label{EL4.6}
Let $\{\varphi_t\}$ be the heat kernel given by Example~\ref{Eg2.5}. Then $\{\varphi_t\}$ satisfies Assumption~\ref{Assump-1.2}.   
\end{lemma}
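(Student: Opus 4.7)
The plan is to transfer the bounds available for the pure Gaussian kernel $\hat{p}_t$ and the pure nonlocal kernel $p_t$ (via \eqref{E2.3}) to the mixed kernel $\tilde{p}_t=\hat{p}_t\ast p_t$ through the convolution identity \eqref{E2.5}, and then to follow the strategies of Example~\ref{Eg-2.1} and Lemma~\ref{L2.3} to verify the three parts of Assumption~\ref{Assump-1.2}.

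First I would establish a pointwise upper bound by splitting the convolution at $|y|=|x|/2$. Since both $\hat{p}_t$ and $p_t$ are radially non-increasing and integrate to one, this yields $\tilde{p}_t(x)\leq \hat{p}_t(x/2)+p_t(x/2)$. The explicit form of the Gaussian gives $\hat{p}_t(x/2)\lesssim \hat{p}_{4t}(x)$, and applying the weak scaling \eqref{E2.2} to the bound \eqref{E2.3} at the scale $2|x|^{-1}$ yields $p_t(x/2)\lesssim p_t(x)$. A matching lower bound is obtained by localizing the convolution: for $|x|\geq 2\sqrt{t}$, restricting $|y|\leq\sqrt{t}$ forces $|x|/2\leq |x-y|\leq 3|x|/2$, so $\hat{p}_t(y)\gtrsim t^{-n/2}$, $p_t(x-y)\gtrsim p_t(x)$ (again by weak scaling), and the volume factor $t^{n/2}$ cancels, giving $\tilde{p}_t(x)\gtrsim p_t(x)$. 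For $|x|\lesssim\sqrt{t}$ a symmetric argument, restricting $y$ to a ball of radius $\phi(t)=1/\psi^{-}(1/t)$ around the origin, gives $\tilde{p}_t(x)\gtrsim t^{-n/2}\gtrsim \hat{p}_t(x)$. Together these estimates imply $\tilde{p}_t(x)\asymp \hat{p}_t(x)+p_t(x)$ up to a harmless rescaling of $t$.

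With these estimates, Assumption~\ref{Assump-1.2}(i) is the L\'evy continuity argument applied to $\tilde\psi(\xi)=|\xi|^2+\psi(\xi)$, exactly as in Lemma~\ref{L2.3}. For Assumption~\ref{Assump-1.2}(ii), the separate comparabilities $\hat{p}_{\eta R}(x_1-y)\lesssim \hat{p}_R(x_2-y)$ (as in Example~\ref{Eg-2.1}) and $p_{\eta R}(x_1-y)\lesssim p_R(x_2-y)$ (as in the proof of Lemma~\ref{L2.3}), combined with the upper bound on $\tilde{p}_{\eta R}$ and the lower bound on $\tilde{p}_R$, transfer to the mixed kernel; positivity and continuity of $\tilde{p}_R$ handle the lower bound on balls. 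For Assumption~\ref{Assump-1.2}(iii), the upper bound splits
$$\sup_{t<R}\int\tilde{p}_t(x-y)f(y)\,dy\lesssim \sup_{t<R}\int\hat{p}_{4t}(x-y)f(y)\,dy+\sup_{t<R}\int p_t(x-y)f(y)\,dy,$$
and the two suprema are controlled by the Gaussian argument of Example~\ref{Eg-2.1} and by Lemma~\ref{L2.3}, respectively, producing the local maximal function $\mathcal{M}_{\Gamma(R)\vee\sqrt{R}}f(x)$ together with tail terms $\hat{p}_R\ast f(x)+p_R\ast f(x)$. Finally, the lower bound $\tilde{p}_{\gamma R}\gtrsim \hat{p}_R+p_R$ for an appropriate $\gamma$ repackages the two tails into the single required term $\int \tilde{p}_{\gamma R}(x-y)f(y)\,dy$.

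The main obstacle I expect is the lower bound on $\tilde{p}_t$: the estimate must be simultaneously comparable to both $\hat{p}_t$ and $p_t$, and the interplay between the two intrinsic scales $\sqrt{t}$ and $\phi(t)$ forces one to treat several regimes of $|x|$ separately. This is precisely what makes the present case more delicate than Lemma~\ref{L2.3}, since sharp two-sided estimates for $\tilde{p}_t$ on $\Rn$ are not generally available. Choosing $\eta$ and $\gamma$ consistently, so that all the weak-scaling and rescaling constants absorb correctly across the three conditions, is the technical bookkeeping required to close the proof.
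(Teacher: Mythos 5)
Your overall architecture is the same as the paper's: exploit the convolution identity \eqref{E2.5}, get (i) from L\'evy continuity for the symbol $\tilde\psi$, get (ii) by comparing each factor at two times, and get (iii) from a near/far annulus decomposition plus a lower bound on the mixed kernel in the far region. The genuine gap is the asserted two\-/sided estimate $\tilde p_t\asymp\hat p_t+p_t$. The upper bound $\tilde p_t(x)\lesssim\hat p_{4t}(x)+p_t(x)$ is correct, but the matching lower bound is false near the origin: $\tilde p_t(x)=\int\hat p_t(x-y)p_t(y)\,dy\le\|\hat p_t\|_\infty=(4\pi t)^{-\frac n2}$ for every $x$, whereas \eqref{E2.3} gives $p_t(0)\asymp[\psi^{-}(1/t)]^n=\phi(t)^{-n}\gg t^{-\frac n2}$ for small $t$ (the condition $\bar\alpha<2$ in \eqref{E2.2} forces $\phi(t)\ll\sqrt t$). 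Your two regional lower bounds ($\tilde p_t\gtrsim p_t$ for $|x|\ge 2\sqrt t$, and $\tilde p_t\gtrsim t^{-n/2}$ for $|x|\lesssim\sqrt t$) are fine, but they do not assemble into the global ``$\asymp$'', and that global statement is load\-/bearing twice in your plan.

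First, for Assumption~\ref{Assump-1.2}(ii) you propose to pass from $\hat p_R(x_2-y)+p_R(x_2-y)$ back to $\tilde p_R(x_2-y)$ via ``the lower bound on $\tilde p_R$''; this fails for $y$ near $x_2$, where $p_R(x_2-y)$ exceeds $R^{-n/2}\ge\tilde p_R(x_2-y)$. The paper needs no pointwise bound on $\tilde p$ here: it convolves the two separate Harnack estimates through \eqref{E2.5}, i.e.\ $\tilde p_{R/4}(x_1-y)=\int\hat p_{R/4}(x_1-y-z)p_{R/4}(z)\,dz\le\kappa\,\kappa_{x_1,x_2}\,\tilde p_R(x_2-y)$. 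Second, in (iii) the tail that Lemma~\ref{L2.3} hands you from the nonlocal factor is $\int_{\{|x-y|>\phi(R)\}}p_R(x-y)f(y)\,dy$, and absorbing it into $\int\tilde p_{\gamma R}(x-y)f(y)\,dy$ requires $p_R(z)\lesssim_R\tilde p_{\gamma R}(z)$ on the band $\phi(R)<|z|\lesssim\sqrt R$, which is exactly where the comparison breaks (there $p_R(z)$ can be of order $\phi(R)^{-n}$ while $\tilde p_{\gamma R}(z)\le(4\pi\gamma R)^{-n/2}$). To close the argument you must either push the annulus decomposition of the nonlocal part out to the diffusive scale $\sqrt R$ before cutting off the tail, or do what the paper does: run the whole decomposition with the combined profile $\tilde\phi(r)=1/\tilde\psi^{-}(r^{-1})\ (\ge\sqrt r)$ and prove the lower bound \eqref{E2.10}, namely $\tilde p_R(x)\gtrsim_R R\,\tilde\psi(|x|^{-1})/|x|^n$ for all $|x|\ge\tilde\phi(R)$, by localizing the Gaussian factor of the convolution to $\{|y|\le 2|x|\}$. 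With one of these repairs your plan goes through; as written it does not.
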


\begin{proof}
Following the same argument as in Lemma~\ref{L2.3}, we note that $\{\varphi_t\}$ satisfies Assumption~\ref{Assump-1.2}(i). 
To prove Assumption~\ref{Assump-1.2}(ii), we consider two points $x_1, x_2$. Then for any $R>0$ it can be easily shown that
(see \cite[Proof of Prop.~2.1]{vivianiPAMS})
$$
\hat{p}_{R/4}(x_1-y)\leq \kappa_{x_1, x_2}\, \hat{p}_R (x_2-y)\quad \text{for all}\; y\in\Rn,
$$
where the constant $\kappa_{x_1, x_2}$ depends on $x_1, x_2$ and $R$ only. Again, by \eqref{E2.2} and \eqref{E2.3}, we have
a constant $\kappa$ satisfying
$$p_{R/4}(z)\leq \kappa p_R(z)\quad \text{for all}\; z\neq 0.$$
Therefore, setting $\eta=\frac{1}{4}$ and using \eqref{E2.5}, we obtain
$$\varphi_{\eta R}(x_1, y)\leq \kappa\, \kappa_{x_1, x_2}\, \varphi_R(x_2, y)\quad \text{for all}\; y\in\Rn.$$

Let us now verify Assumption~\ref{Assump-1.2}(iii).
Since 
$\tilde{p}_t$ is radially decreasing (see \cite{Watanabe}), by \cite[Corollary~7]{Bogdan}, we have
\begin{equation}\label{E2.6}
    \tilde{p}_t(x)\leq C t\frac{\tilde{\psi}(|x|^{-1})}{|x|^n}\quad t>0, \; x\neq 0,
\end{equation}
for some constant $C$. Again, since $\hat{p}_t\leq (4\pi t)^{-\frac{n}{2}}$, using \eqref{E2.5} and \eqref{E2.6} we obtain the following bound
\begin{equation}\label{E2.7}
  \tilde{p}_t(x)\leq C \left[t^{-\frac{n}{2}}\wedge t\frac{\tilde{\psi}(|x|^{-1})}{|x|^n}\right]\quad t>0, \; x\neq 0,  
\end{equation}
for some constant $C$.
Define \( \tilde{\phi}(r) = \frac{1}{\tilde{\psi}^{-}(r^{-1})} \). Since \( \tilde{\psi} \) is strictly increasing, \( \tilde{\phi} \) is also strictly increasing. We will now follow the same arguments as in Lemma~\ref{L2.3}. Fix \( R > 0 \), and decompose \( \tilde{p}_t(x) \) as follows  
\[
\tilde{p}_t(x) = \tilde{p}_t(x) \chi_{\{|x| \leq \tilde{\phi}(R)\}} + \tilde{p}_t(x) \chi_{\{|x| > \tilde{\phi}(R)\}} := \tilde{W}^1_t(x) + \tilde{W}^2_t(x).
\]  
For \( t \in (0, R) \), choose \( j \in \mathbb{N} \cup \{0\} \) such that \( 2^j t < R \leq 2^{j+1} t \). Define \( \tilde{r}_j = \tilde{\phi}(t 2^j) \). Then,  
\begin{equation}\label{E2.8}
    \tilde{W}_t(x) \leq \tilde{p}_t(x) \chi_{\{|x| \leq \tilde{r}_0\}} + \sum_{k \leq j+1} \tilde{p}_t(x) \chi_{\{\tilde{r}_{k-1} < |x| \leq \tilde{r}_k\}}.
\end{equation}  
Note that \( \tilde{\psi}\left(\frac{1}{\tilde{r}_k}\right) = \frac{1}{t 2^k} \). We claim that there exists $\kappa_R$, dependent on $R$, such that
$$\tilde\psi(\kappa_R \sqrt{1/t})\leq 1/t\quad \text{for}\; t\in (0, R).$$
Using \eqref{E2.2} we get that
\begin{align*}
 \tilde\psi(\kappa_R \sqrt{1/t})
\leq \kappa_R^2 t^{-1} + \psi(\kappa_R \sqrt{1/t})
&\leq \kappa_R^2 t^{-1} + \bar{C} (R/t)^{\frac{\bar\alpha}{2}} \psi(\kappa_R \sqrt{1/R})
\\
&\leq \kappa_R^2 t^{-1} + \bar{C} (R/t) \psi(\kappa_R \sqrt{1/R})
\\
&\leq \frac{1}{t}(\kappa_R^2 + \frac{1}{R}\psi(\kappa_R \sqrt{1/R})).
\end{align*}
Since $\psi(0)=0$, choosing $\kappa_R$ small enough, we have the claim.
Therefore, using \eqref{E2.7}, we obtain  
\[
\tilde{p}_t(x) \chi_{\{|x| \leq \tilde{r}_0\}} \leq \frac{C}{(\kappa_R)^n} \frac{1}{\tilde{r}_0^n} \chi_{\{|x| \leq \tilde{r}_0\}}
\]
and
\begin{align*}
 \tilde{p}_t(x) \chi_{\{\tilde{r}_{k-1}<|x|\leq \tilde{r}_{k}\}} 
 &\leq C t \frac{\tilde{\psi}(\tilde{r}^{-1}_k)}{\tilde{r}_k^n}\chi_{\{|x|\leq \tilde{r}_{k+1}\}}
 \\
& = C \frac{\tilde{r}^{n}_{k+1}}{2^k \tilde{r}^{n}_{k}}\frac{1}{\tilde{r}_{k+1}^n}\chi_{\{|x|\leq \tilde{r}_{k+1}\}}
\leq \frac{C}{2^k}\, \left[\sup_{\theta>0}\frac{\tilde\psi^{-}(2\theta)}{\tilde{\psi}^{-}(\theta)}\right]^n \frac{1}{\tilde{r}_{k+1}^n}\chi_{\{|x|\leq \tilde{r}_{k+1}\}},
\end{align*}
for some constant $C.$ Note that for \( \lambda > 1 \), applying \eqref{E2.2}, we obtain  
\begin{equation}\label{E1}
    \tilde{\psi}(\lambda \theta) \geq \lambda^2 \theta^2 + \underline{c} \lambda^{\underline{\alpha}}\, \psi(\theta) \geq (\lambda^2 \wedge \underline{c} \lambda^{\underline{\alpha}}) \, \tilde{\psi}(\theta).
\end{equation}  
Now, choose \( \lambda > 1 \) such that \( (\lambda^2 \wedge \underline{c} \lambda^{\underline{\alpha}}) \geq 2 \). Applying \eqref{E1}, we deduce that  
$$
\tilde{\psi}\left(\lambda \tilde{\psi}^{-}(\theta)\right) \geqslant (\lambda^2 \wedge \underline{c} \lambda^{\underline{\alpha}}) \,\theta\geqslant 2 \theta
\Rightarrow\tilde{\psi}^{-}(2 \theta) \leqslant \lambda \tilde{\psi}^{-}(\theta)\quad \text{for all}\; \theta>0.
$$
Thus, letting $\tilde{\Gamma}(r)=\tilde{\phi}(2r)$ and combining the above estimates, we obtain
$$\sup_{t\leq R} \int_{\Rn} \tilde{W}^1_t(x-y) f(y) dy\leq \xi_{1}(R) \, \mathcal{M}_{\tilde{\Gamma}(R)} f(x)$$
for all $f\geq 0$, where $\xi_{1}(R)= C ((\kappa_R)^{-n}+\lambda^n \sum_{k\geq 1} 2^{-k} )$. On the other hand,
for $|x|>\tilde{\phi}(R)$, we have from \eqref{E2.7} that
$$\tilde{p}_t(x)\leq R\frac{\tilde\psi(|x|^{-1})}{|x|^n}.$$
Thus, to estimate $\tilde{W}^2_t$ it is enough to show that there exists a constant 
$\xi_2(R)$ satisfying
\begin{equation}\label{E2.10}
\tilde{p}_R(x)\geq \xi_2(R)\, R  \frac{\tilde\psi(|x|^{-1})}{|x|^n}\quad \text{for}\; |x|\geq \tilde{\phi}(R).
\end{equation}
Once \eqref{E2.10} is established,  it would give us
$$\sup_{t\leq R} \int_{\Rn}\tilde{W}^2_t (x-y) f(y)\, dy\leq \xi_2(R) \int_{\Rn} \tilde{p}_R(x-y) f(y) dy$$
for $f\geq 0$. Hence, Assumption~\ref{Assump-1.2}(iii) holds with $\gamma=1$.

Thus, we remained to prove \eqref{E2.10}. Denote by $\varrho=\tilde{\phi}(R)$. For $|x|\geq \varrho$, we compute using \eqref{E2.5}
that
\begin{align*}
\tilde{p}_R (x)&\geq (4\pi R)^{-n/2}\int_{|y|\leq 2|x|} e^{-\frac{|x-y|^2}{4R}} p_R(y)\, dy
\\
&\geq (4\pi R)^{-n/2} p_R(2|x|) \int_{|y|\leq 2|x|} e^{-\frac{|x-y|^2}{4R}}\, dy
\\
&\geq (4\pi R)^{-n/2} p_R(2|x|) \int_{|y-x|\leq |x|} e^{-\frac{|x-y|^2}{4R}}\, dy
\\
&\geq (4\pi R)^{-n/2} p_R(2|x|) \omega_{n} \int_{0}^{|x|} e^{-\frac{s^2}{4R}} s^{n-1}\, ds
\\
&\geq (4\pi R)^{-n/2} p_R(2|x|) \omega_{n} \int_{0}^{\varrho} e^{-\frac{s^2}{4R}} s^{n-1}\, ds,
\end{align*}
where $\omega_n$ denotes the surface measure of the unit ball, and in the second inequality we used the fact $p_R$ is radially decreasing. From \eqref{E2.3} and scaling of $\psi$ in \eqref{E2.2} we can find a constant $\kappa_R$ so that
$$p_R(2|x|)\geq \kappa_R \frac{\psi(|x|^{-1})}{|x|^n} \quad \text{for}\; |x|\geq \varrho.$$
Again, since
$$|x|^{-2}\leq \varrho^{-2+\bar\alpha} |x|^{-\bar\alpha} \leq \varrho^{-2+\bar\alpha} \bar{C} \frac{\varrho^{-\bar\alpha}}{\psi(\varrho^{-1})}
\psi(|x|^{-1})$$
for some constant $\tilde\kappa_R,$ we have $\psi(|x|^{-1})\geq \tilde\kappa_R\, \tilde\psi(|x|^{-1})$ for all $|x|\geq \varrho$.
Thus we have \eqref{E2.10}.
\end{proof}

\begin{example}\label{Eg9.6}
Let  $(M, \nu)$ be a geodesically complete Riemannian manifold and $\nu$ be the Riemannian volume which also satisfies the volume doubling property. Also, assume that the Ricci curvature of 
$M$ is non-negative. Let $\dist$ denote the geodesic distance on $M$. The Laplace-Beltrami operator $\Delta$ on $M$ generates the heat semigroup $\{S_t\}$, which is associated with a
local Dirichlet form $\mathcal{E}$ given by 
$$\mathcal{E}(f)=\int_M |\nabla f|^2 d\nu, \quad f\in W^{1, 2}_0(M).$$
Here $\nabla f$ denotes the Riemannian gradient. Also, it is well known that $S_t$ has a smooth heat kernel $\varphi_t(x, y)$. Furthermore, by \cite{Li-Yau}, \cite[Section~6.1]{Gri},
\begin{equation}\label{AB101}
\frac{1}{\nu\left(B\left(x, \sqrt{t}\right)\right)} e^{-c_1\frac{\dist(x, y)^2}{t}}\lesssim \varphi_t(x, y)\lesssim \frac{1}{\nu\left(B\left(x, \sqrt{t}\right)\right)} e^{-c_2\frac{\dist(x, y)^2}{t}}\quad \text{for all}\; t>0, \; x, y\in M,
\end{equation}
for some constants $c_1\geq c_2>0$.
Again, by \cite[Example~3.17]{Gri}, we see that $S_t$ is stochastically complete, that is, 
\begin{equation}\label{AB102}
\int_M \varphi_t(x, y)\,\nu(dy)=1\quad \text{for all}\; t>0.
\end{equation}
Using \eqref{AB102}, it can be easily seen that Assumption~\ref{Assump-1.2}(i) holds. Assumption~\ref{Assump-1.2}(ii) can also be easily checked. To verify Assumption~\ref{Assump-1.2}(iii), we recall that
by the volume doubling property of $\nu$, there exists $n>0$ and $C_1\geq 1$ so that 
\begin{equation}\label{AB103}
\nu(B(x, \lambda r))\leq C_1 \lambda^n \nu(B(x, r))\quad \text{for all}\; \lambda\geq 1, \; r>0\; \text{and}\; x\in M.
\end{equation}
Fix $R>0$ and $x\in M$. For $t\in (0, R]$ choose $j\in\mathbb{N}$ so that $2^{j}t\leq R\leq 2^{j+1}t$. As before, choose $\alpha>0$ large enough so that
$$t\mapsto t^{-\frac{n}{2}} e^{-\frac{c_2 r}{t}}\quad \text{is increasing in $(0, R]$ for all}\; r\geq \alpha R.$$
Let $\chi_0=\chi_{\left\{\dist(x, y)\leq \sqrt{\alpha t}\right\}}$ and $\chi_{k}= \chi_{\left\{\sqrt{\alpha 2^{k} t}<\dist(x, y)\leq \sqrt{\alpha 2^{k+1} t}\right\}}$. As before,
we can estimate $$\varphi_t(x, y) \chi_{\left\{\dist(x, y) \leq \sqrt{\alpha R}\right\}} \leq 
 \sum_{k=0}^j \varphi_t(x, y)\chi_k(y)$$ as below
\[\varphi_t(x, y)\chi_0(y) \lesssim \frac{\nu\left(B\left(x,\sqrt{\alpha t}\right)\right)}{\nu\left(B\left(x,\sqrt{ t}\right)\right)} \frac{1}{\nu\left(B\left(x,\sqrt{\alpha t}\right)\right)} \chi_{\left\{\dist(x, y)\leq \sqrt{\alpha t}\right\}}\lesssim \frac{1}{\nu\left(B\left(x,\sqrt{\alpha t}\right)\right)} \chi_{B\left(x, \sqrt{\alpha t}\right)}\]
and 
\begin{align*}
\varphi_t(x, y)\chi_k(y) &\lesssim \frac{\nu\left(B\left(x,\sqrt{\alpha 2^{k+1} t}\right)\right)}{\nu\left(B\left(x, \sqrt{t}\right)\right)} \frac{e^{-c_2\alpha 2^k}}{\nu\left(B\left(x,\sqrt{\alpha 2^{k+1} t}\right)\right)}
\chi_{\left\{\dist(x, y)\leq \sqrt{\alpha 2^{k+1} t}\right\}}
\\
&\lesssim 2^{\frac{nk}{2}} e^{-c_2\alpha 2^k} \frac{1}{\nu\left(B\left(x,\sqrt{\alpha 2^{k+1} t}\right)\right)}  \chi_{B\left(x, \sqrt{\alpha 2^{k+1} t}\right)},
\end{align*}
where we use \eqref{AB103}. Hence,
$$\sup_{t\in(0, R)} \int_{M}\varphi_t(x, y)\chi_{\left\{\dist(x, y)\leq\sqrt{\alpha R}\right\}} f(y)\, d\nu(y) \lesssim \mathcal{M}_{\sqrt{2\alpha R}} f(x),$$
for all $f\geq 0$. Now Assumption~\ref{Assump-1.2}(iii) follows by letting $\gamma=\frac{c_1}{c_2}$. More precisely, since by 
\eqref{AB103}, we have
$$ \nu(x, \sqrt{R}) \leq C_1 (R/t)^{\frac{n}{2}} \nu(x, \sqrt{t}), $$
by our choice of $\alpha$, we obtain for $\dist(x, y)>\sqrt{\alpha R}$ that
\begin{align*}
\varphi_t(x, y)\lesssim \frac{R^{\frac{n}{2}}}{\nu(x, \sqrt{R})}
t^{-\frac{n}{2}} e^{-c_2\frac{\dist(x,y)^2}{t}}
\leq \frac{R^{\frac{n}{2}}}{\nu(x, \sqrt{R})} R^{-\frac{n}{2}} e^{-c_2\frac{\dist(x,y)^2}{R}}\lesssim \varphi_{\gamma R}(x, y).
\end{align*}
\end{example}

Following is an example from the fractals. Many other examples of similar type can be found in \cite{Kigami,Barlow,BB99,FS92}.
\begin{example}\label{fractal}
We consider the unbounded Sierpi\'nski  gasket $M$ in $\mathbb{R}^2$ from \cite{BP88}. The Hausdorff dimension of  $M$ is $d_f=\frac{\log 3}{\log 2}$. Equip $M$ with the
$d_f$-dimensional Hausdorff measure $\nu$. Note that $\nu(B(x, r))\simeq r^{d_f}$ for all $r>0$ and $x\in M$. Also, the gasket metric is equivalent to
the metric induced by the Euclidean norm.
From \cite{BP88} it is also known that there exists a semigroup $\{S_t\}$ associated to the Brownian motion in $M$, which also attains a continuous heat kernel $\{\varphi_t\}$.
Furthermore, for $d_w=\frac{\log 5}{\log 2}$ (the walk dimension) and constants $c_1\geq c_2>0$ we have
$$
t^{-\frac{d_f}{d_w}} \exp\left(-c_1\left(\frac{|x-y|^{d_w}}{t}\right)^{\frac{1}{d_w-1}}\right)
\lesssim \varphi_t(x, y)\lesssim t^{-\frac{d_f}{d_w}} \exp\left(-c_1\left(\frac{|x-y|^{d_w}}{t}\right)^{\frac{1}{d_w-1}}\right)
$$
for all $x, y\in M$ and $t>0$. Setting $\Gamma(r)=(\alpha r)^{\frac{1}{d_w}}$ for a suitable large $\alpha$, we can see that Assumption~\ref{Assump-1.2} holds (see the argument in Example~\ref{Eg9.6}).
\end{example}

\subsection{The Dunkl  operator}\label{sds}
In this section, we show that Assumption~\ref{Assump-1.2} holds with suitable modification for the Dunkl operator and therefore,
an appropriate version of Theorem~\ref{T-1.3} holds for the Dunkl operator as well. To introduce the operator, we consider a normalized root
system $\mathcal{R}\subset\Rn$ and multiplicity function $\mathsf{k}: \mathcal{R}\to [0, \infty)$. Also, define the measure,
$$\mu_{\mathsf k}(dx)=\prod_{\alpha\in\mathcal{R}}|\langle x, \alpha\rangle|^{\mathsf{k}(\alpha)}\ dx= 
\prod_{\alpha\in\mathcal{R}_+}|\langle x, \alpha\rangle|^{2\mathsf{k}(\alpha)}\, dx :=w_{\mathsf k}(x)\, dx.$$
Let $\mathcal{G}$ be the Coxeter (reflection) group generated by $\mathcal{R}$. Note  that $\mathsf{k}$ is invariant under
$\mathcal{G}$, i.e. $\mathsf{k}(gv)=\mathsf{k}(v)$ for all  $g\in \mathcal{G}, v\in \mathcal{R}.$ The closure of the connected components of
$$
\{x\in\Rn\; :\; \langle x, \alpha\rangle\neq 0\quad \forall\; \alpha\in\mathcal{R}_+\}
$$
are called \textbf{\textit{Weyl chambers}}. We denote this Weyl
chambers by $\mathcal{W}_i$.
By $L^p(\Rn, \mu_{\mathsf k})$ we denote the Lebesgue space with respect to the measure $\mu_{\mathsf k}$. For $\xi\in\Rn$, the
\textit{\textbf{Dunkl operators}} $T_\xi$, as introduced by Dunkl in \cite{Dunkl}, are the following deformation of the directional derivative $\partial_\xi$:
$$
T_\xi f(x)=\partial_\xi f(x) +\sum_{\alpha\in\mathcal{R}_+} \mathsf{k}(\alpha)\langle \alpha, \xi\rangle\frac{f(x)-f(\sigma_\alpha(x))}{\langle \alpha, x\rangle}, $$
where $\sigma_\alpha$ denotes the reflection with respect to the hyperplane orthogonal to $\alpha$. The \textit{\textbf{Dunkl Laplacian} }
associated to $\mathcal{R}$ and $\mathsf{k}$ is given by $\Delta_{\mathsf k}=\sum_{j=1}^n T_j^2,$ where $T_j=T_{e_j}$ and 
$\{e_j\;, 1\leq j\leq n\}$ is the canonical orthonormal basis in $\Rn$. More precisely, for $f\in C^2(\Rn)$, we have
$$\Delta_{\mathsf k} f(x)=\Delta f(x) + \sum_{\alpha\in\mathcal{R}_+} 2\mathsf{k}(\alpha) \delta_\alpha f(x),$$
where 
$$\delta_\alpha f(x)= \frac{\partial_\alpha f(x)}{\langle \alpha, x\rangle}  -\frac{\norm{\alpha}^2}{2}
\frac{f(x)-f(\sigma_\alpha(x))}{\langle \alpha, x\rangle^2}.$$
It is known from \cite{Amri-Hammi, Rosler} that $\Delta_{\mathsf k}$ generates a contraction semigroup $\{D_t\}$ on $L^2(\Rn, \mu_{\mathsf k})$
and $D_t$ can also be extended to a bounded linear operator from $L^p(\Rn, \mu_{\mathsf k})$ to $L^\infty(\Rn, \mu_{\mathsf k})$
for $1\leq p\leq \infty$.

%%%%%%%%%%%%%%%%%%%%%%%%%%%%%%%%%%%%%%%%%%%%%%%%%%%%%%%%%%%%%%%%%%%%%
For the weight function \( w_{\mathsf k} \), there exists an isometry on the Hilbert space \( L^2(\Rn, \mu_{\mathsf k}) \), known as the Dunkl transformation (see \cite{Dunkl-transform}). This transformation exhibits properties analogous to those of the classical Fourier transform and is defined by the formula  
\[
\mathcal{F}_{\mathsf k}{f}(\xi) = c_\mathsf k \int_{\mathbb{R}^n} f(x) E_{\mathsf k}(-i \xi, x) w_{\mathsf k}(x) \, dx,
\]
where \( E_{\mathsf k}(x, y) \), referred to as the Dunkl kernel, is the unique solution of the following system of differential equations:  
\[
T_\xi f(x) = \langle y, \xi \rangle f(x), \quad f(0) = 1.
\]
Here, \( c_{\mathsf k} \) is the normalization constant given by  
\[
c_{\mathsf k}^{-1} = \int_{\mathbb{R}^n} e^{-\frac{1}{2}|x|^2} \mu_{\mathsf k}(dx).
\]
Since the measure \( \mu_{\mathsf k}(dx) \) is not invariant under standard translation, a generalized translation operator is introduced in the context of the Dunkl transform. This operator is defined on the Dunkl transform as  
\[
\mathcal{F}_{\mathsf k}({\tau}_y f)(\xi) = E_{\mathsf k}(y, -i \xi) \mathcal{F}_{\mathsf k}{f}(\xi), \quad \xi \in \mathbb{R}^n.
\]
In general, there is no explicit formula for \( \tau_y f \); however, an explicit expression is available when \( f \) is a radial function (see \cite[Theorem 6.3.2]{Dai-Xu}).
For \( f, g \in L^2(\mathbb{R}^n, \mu_{\mathsf k}) \), their Dunkl convolution can be defined in terms of the translation operator as follows  
\[
(f *_{\mathsf k} g)(x) = \int_{\mathbb{R}^n} f(y) \tau_x g(-y) \, \mu_\mathsf{k}(dy).
\]
It has been established in \cite{Thangavelu-Xu} that if \( g \) is a bounded radial function in \( L^1(\mathbb{R}^n, \mu_{\mathsf k}) \), the convolution \( f *_{\mathsf k} g \) extends to all \( f \in L^p(\mathbb{R}^n, \mu_{\mathsf k}) \), \( 1 \leq p \leq \infty \), as a bounded operator. Specifically, the following inequality holds,  
\[
\|f *_{\mathsf k} g\|_{L^p(\mathbb{R}^n, \mu_{\mathsf k})} \leq \|g\|_{L^1(\mathbb{R}^n, \mu_{\mathsf k})} \|f\|_{L^p(\mathbb{R}^n, \mu_{\mathsf k})}.
\]
%Let \( B_r = B(0, r) \) denote the Euclidean ball of radius \( r \) centered at the origin, and let \( \chi_{B_r} \) represent its characteristic function. For \( f \in L_{\text{loc}}^1(\mathbb{R}^n, \mu_{\mathsf k}) \), the maximal function \(\mathcal{M}^{\mathsf k} f \) is defined by  
%\[
%\mathcal{M}^{\mathsf k} f(x) := \sup_{r > 0} \frac{\left|f *_{\mathsf k} \chi_{B_r}(x)\right|}{\int_{B_r} \mu_{\mathsf k}(dx)} 
%\backsimeq \sup_{r > 0} \frac{1}{r^{d_{\mathsf k}}} \left|f %*_{\mathsf k} \chi_{B_r}(x)\right|,
%\]
Define \( d_{\mathsf k} = n + \sum_{\alpha \in \mathcal{R}_+} 2\mathsf{k}(\alpha) \).
The semigroup \( D_t := e^{t \Delta_{\mathsf{k}}} \), generated by the Dunkl Laplacian \( -\Delta_{\mathsf{k}} \), can be expressed in terms of a positive kernel \( q_t \). Specifically,  
\begin{equation}\label{Dunkl-kernel}
D_t f = f *_{\mathsf{k}} q_t,
\end{equation}
where \( q_t(x) \) is given by  
\[
q_t(x) = c_{\mathsf{k}}^{-1} (2t)^{-d_{\mathsf{k}}/2} e^{-|x|^2 / (4t)}.
\]
The Dunkl transform of the heat kernel satisfies  
$\mathcal{F}_\mathsf{k}{q}_t(\xi) = c_{\mathsf k}^{-1} e^{-t|\xi|^2}.$
Moreover, the action of the generalized translation operator on the heat kernel satisfies the relation  
\[
\tau_{\mathsf k}(-y) q_t(x) = c_{\mathsf k}^{-1} (2t)^{-d_{\mathsf k}/2} e^{-\frac{|x|^2 + |y|^2}{4t}} E_{\mathsf k}\left(\frac{x}{\sqrt{2t}}, \frac{y}{\sqrt{2t}}\right):=h_t(x, y),
\]
leading to
\begin{equation}\label{E2.12}
D_t f (x) = f *_{\mathsf{k}} q_t(x) = \int_{\Rn} f(y) h_t(x,y) \mu_{\mathsf k}(dy).
\end{equation}
Since 
$$\mu_{\mathsf k}(B(x, r))\simeq r^n\,  \Pi_{\alpha\in\mathcal{R}_+} (|\langle x, \alpha\rangle| + r)^{2\mathsf{k}(\alpha)},
\quad r>0,$$
$\mu_{\mathsf k}$ satisfies the volume doubling condition and one
could attempt to verify Assumption~\ref{Assump-1.2} in this case.
But we take advantage of the Dunkl transform and consider the natural
Hardy-Littlewood maximal function associated to the Dunkl convolution.
For a positive weight $v$, we define the weight class with respect
to $\mu_{\mathsf k}$.
For \( 1 \leq p < \infty \), the weighted Lebesgue space \( L^p_v(\mathbb{R}^n, \mu_{\mathsf k}) \) is defined as  
\[
L^p_v(\mathbb{R}^n, \mu_{\mathsf k}) = \left\{ f: \mathbb{R}^n \to \mathbb{R} : \|f\|_{L^p_v(\mathbb{R}^n, \mu_{\mathsf k})} = \|f v^{\frac{1}{p}} \|_{L^p(\mathbb{R}^n, \mu_{\mathsf k})} < \infty \right\}
\]
and we say that \( v \) belongs to the class \( D_p^{\mathsf k} \) if there exists \( t_0>0 \) such that  for each 
$\mathcal{W}_i$ there exists $x_i\in \mathcal{W}_i$ satisfying
\begin{equation} \label{E2.13}
\|v^{-\frac{1}{p}} h_{t_0}(x_i, \cdot)\|_{L^{p'}(\mathbb{R}^n, \mu_{\mathsf k})} < \infty.
\end{equation}
To present our results in the Dunkl setting, we define the local maximal operator as  
\[
D^*_R f(x) = \sup_{0 < t < R} \left| f *_{\mathsf{k}} q_t \right|, \quad \text{for } R > 0.
\]
We now adapt and verify Assumption \ref{Assump-1.2} within this setting.  
\begin{proposition}\label{Assump-Dunkl}
The following hold:
\begin{itemize}
\item[(i)] For every $f\in C_c(\Rn)$ we have $\lim_{t\to 0} f *_{\mathsf{k}} q_t(x)=f(x)$ for every $x\in\Rn$.
\item[(ii)] There exists $\eta\in (0, 1]$ such that for $R\in (0, T)$ and $x_1, x_2\in \mathcal{W}_i$, there exists a constant $C=C(R, x_1, x_2)$ satisfying
$$ h_{\eta R}(x_1, y)\leq C\, h_R (x_2, y)\quad \text{for all}\; y\in\Rn.$$
Moreover, for any compact set $K\subset\Rn$, there exists a positive constant $C=C(x, K, R)$ satisfying 
$\inf_{y\in K}h_R(x, y)\geq C$.
\item[(iii)] There exist $\gamma\geq 1$,
 a function $\Gamma:(0, \infty)\to (0, \infty)$ and a constant $C$, such that for any $R>0$ we have
$$D^*_R f(x)\leq C\left(\mathcal{M}^{\mathsf{k}}_{\Gamma(R)} f(x) +  f *_{\mathsf{k}} q_{\gamma R}(x)\right)\quad \text{for}\; x\in\Rn,$$
for every measurable $f\geq 0$, where $\mathcal{M}^{\mathsf{k}}_R$ denotes the local Dunkl maximal function defined by
\begin{equation}\label{Dunml-LMF}
\mathcal{M}^{\mathsf{k}}_R f(x)= \sup_{r\in (0, R)}\, 
\frac{1}{\mu_{\mathsf k}(B(x,r))} \left|f *_{\mathsf k} \chi_{B_r}(x)\right|.
\end{equation}
\end{itemize}
\end{proposition}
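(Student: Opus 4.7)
The plan is to verify the three conditions in turn, exploiting the radial structure of $q_t$ and the positivity of the Dunkl translation $\tau_x$ on radial functions (Rösler). Part (i) follows directly: the Dunkl transform identity $\mathcal{F}_{\mathsf{k}}(f *_{\mathsf{k}} q_t)(\xi) = c_{\mathsf{k}}^{-1} e^{-t|\xi|^2}\mathcal{F}_{\mathsf{k}} f(\xi)$ shows that $\{q_t\}$ is an approximate identity for the Dunkl convolution, and for $f \in C_c(\mathbb{R}^n)$ the pointwise convergence $(f *_{\mathsf{k}} q_t)(x) \to f(x)$ as $t \to 0$ is classical (see Rösler \cite{Rosler}, Thangavelu--Xu), following from the Gaussian decay of $q_t$ together with the $L^\infty$ bound for $\tau_x$ acting on radial functions.

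For part (ii), I would invoke the sharp heat kernel bounds of \cite{DA23} in the form
\[
h_t(x, y) \asymp \frac{1}{V_{\mathsf{k}}(x, y, \sqrt{t})}\exp\!\left(-\frac{d(x, y)^2}{c_{\pm}\, t}\right),
\]
where $d(x, y) := \min_{g \in \mathcal{G}} |x - gy|$ is the reflection-group distance, $V_{\mathsf{k}}(x, y, r)$ is a symmetrized ball volume, and $c_{\pm}$ are the Gaussian constants in the upper and lower bound respectively. If $x_1, x_2$ lie in the same Weyl chamber, the triangle inequality gives $d(x_2, y)^2 \leq 2 d(x_1, y)^2 + 2|x_1 - x_2|^2$, and choosing $\eta \in (0, 1]$ with $1/(c_+ \eta) \geq 2/c_-$ yields
\[
\exp\!\left(-\frac{d(x_1, y)^2}{c_+ \eta R}\right) \leq e^{2|x_1 - x_2|^2/(c_- R)}\exp\!\left(-\frac{d(x_2, y)^2}{c_- R}\right),
\]
while the volume ratio $V_{\mathsf{k}}(x_2, y, \sqrt{R})/V_{\mathsf{k}}(x_1, y, \sqrt{\eta R})$ is controlled by an $(x_1, x_2, R)$-dependent constant via doubling. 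The moreover part follows from the lower bound on $h_R$ together with boundedness of $d(x, \cdot)$ on compact sets.

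Part (iii) is the heart of the argument, and the key idea is to perform the dyadic decomposition of the Gaussian at the radial level \emph{before} applying $\tau_x$. Fix $R > 0$ and $t \in (0, R)$, choose $j \geq 0$ with $2^j t \leq R < 2^{j+1} t$, pick a large $\alpha > 0$ to be fixed, and split
\[
q_t(z) = q_t(z)\chi_{\{|z| \leq \sqrt{\alpha R}\}} + q_t(z)\chi_{\{|z| > \sqrt{\alpha R}\}}.
\]
On the dyadic shells $A_k = \{\sqrt{\alpha 2^{k} t} \leq |z| < \sqrt{\alpha 2^{k+1} t}\}$, $0 \leq k \leq j$, the radial bound $q_t(z)\chi_{A_k}(z) \leq C (2t)^{-d_{\mathsf{k}}/2} e^{-\alpha 2^{k}/4}\chi_{B(0, \sqrt{\alpha 2^{k+1} t})}(z)$ holds; applying $\tau_x$ (positivity preserved for radial functions), integrating against $f \geq 0$ via $(f *_{\mathsf{k}}\chi_{B(0, r)})(x) \leq \mu_{\mathsf{k}}(B(x, r))\,\mathcal{M}^{\mathsf{k}}_r f(x)$, and using the volume bound $\mu_{\mathsf{k}}(B(x, r)) \leq C r^{d_{\mathsf{k}}}$, one obtains a bound by $C\,\mathcal{M}^{\mathsf{k}}_{\Gamma(R)} f(x)$ with $\Gamma(R) = \sqrt{2\alpha R}$, since the series $\sum_{k\geq 0} (\alpha 2^{k})^{d_{\mathsf{k}}/2} e^{-\alpha 2^{k}/4}$ converges. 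For the far piece, a direct pointwise Gaussian comparison gives $q_t(z)\chi_{\{|z| > \sqrt{\alpha R}\}} \leq C\, q_{\gamma R}(z)$ as radial functions, for some $\gamma \geq 1$ and $\alpha$ chosen sufficiently large (depending on $d_{\mathsf{k}}, \gamma$); positivity of $\tau_x$ then transfers this to $\tau_x[q_t\chi_{\{|z| > \sqrt{\alpha R}\}}](-y) \leq C\, h_{\gamma R}(x, y)$, producing the second summand in (iii).

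The principal obstacle is that every manipulation must be done at the radial level prior to applying $\tau_x$, since positivity of the Dunkl translation is only available for radial functions; this dictates the choice of the local Dunkl maximal function (defined via Dunkl convolution with ball indicators rather than by Euclidean averaging) and is the main reason why the classical Hardy--Littlewood argument does not transplant directly into this setting.
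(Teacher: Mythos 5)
Your overall strategy coincides with the paper's for all three parts: (i) is handled by the approximate-identity/translation argument, (ii) by the sharp Dunkl heat kernel bounds of \cite{DA23} together with the triangle inequality for the reflection distance $d(x,y)=\min_{g\in\mathcal G}|x-gy|$ and volume doubling, and (iii) by a dyadic shell decomposition of the radial Gaussian carried out \emph{before} applying $\tau_x$, using positivity and monotonicity of the Dunkl translation on radial functions (the paper even takes the simpler route $\sup_{t<R} q_t\chi_{\{|z|>\sqrt{\alpha R}\}}=q_R\chi_{\{|z|>\sqrt{\alpha R}\}}\leq q_R$, i.e.\ $\gamma=1$, for the far piece). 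Your closing remark about why the decomposition must be done at the radial level is exactly the point the paper emphasizes.

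There is, however, one genuine gap in (ii). The two-sided bound you quote, $h_t(x,y)\asymp V_{\mathsf k}(x,y,\sqrt t)^{-1}\exp(-d(x,y)^2/(c_\pm t))$, is not the form of \cite[Theorem~1.1]{DA23}: the sharp bounds there carry an additional positive factor $\Lambda(x,y,t)$ (a product over the reflection group of terms of the type $\bigl(1+|x-\sigma_\alpha(y)|/\sqrt t\bigr)^{-2}$), and this factor appears on \emph{both} sides of the estimate. Your Gaussian and volume comparisons therefore do not suffice; you must also show $\Lambda(x_1,y,\eta R)\leq C(x_1,x_2,R)\,\Lambda(x_2,y,R)$ uniformly in $y$. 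The paper does this via the elementary submultiplicativity $1+|x_2-\sigma_\alpha(y)|/\sqrt R\leq(1+|x_1-x_2|/\sqrt R)(1+|x_1-\sigma_\alpha(y)|/\sqrt R)$, together with the monotonicity of $\Lambda$ in $t$. Without this step the proof of (ii) is incomplete. A secondary caution for (iii): the volume bound you invoke, $\mu_{\mathsf k}(B(x,r))\leq Cr^{d_{\mathsf k}}$, is false uniformly in $x$ (one has $\mu_{\mathsf k}(B(x,r))\simeq r^n\prod_{\alpha\in\mathcal R_+}(|\langle x,\alpha\rangle|+r)^{2\mathsf k(\alpha)}$, which only gives the reverse inequality $\mu_{\mathsf k}(B(x,r))\gtrsim r^{d_{\mathsf k}}$); it holds with $B(0,r)$ in place of $B(x,r)$, which is the normalization actually needed to make the constant in (iii) independent of $x$, so you should state and use the bound in that form.
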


\begin{proof}
 (i) can be obtained following the proof of \cite[Theorem 6.4.2]{Dai-Xu}. More precisely, fix $x$ and $\varepsilon>0$. Choose
 a $g\in C^\infty_c(\Rn)$ satisfying 
 $$\sup_{\Rn}\{|f\ast_{\mathsf k} q_t-g\ast_{\mathsf k} q_t|,
 |f-g|\}< \frac{\varepsilon}{2},$$
 for all $t>0$ (use \cite[Theorem~6.4.1]{Dai-Xu}). Since
 $$g\ast_{\mathsf k} q_t(x) -g(x)=
 \int_{\Rn}(\tau_{\mathsf k}(y)g(x)-g(x))q_t(y)\mu_{\mathsf k}(dy),
 $$
 and $|\tau_{\mathsf k}(y)g(x)-g(x)|\lesssim |y|$ by \cite[Theorem~6.2.7]{Dai-Xu}, we get $g\ast_{\mathsf k} q_t(x)- g(x)\to 0$ as $t\to 0$. Hence 
 $$\limsup_{t\to 0}|f\ast_{\mathsf k} q_t(x)- f(x)|\leq \varepsilon. $$
(i) follows from the arbitrariness of $\varepsilon$.

To verify (ii), we first recall the distance function induced by $\mathcal{G}$. Let
$$ {\rm d}(x, y)=\min\{|x-g(y)| : \, g\in \mathcal{G}\}.$$
Also, let $\omega(x, r)=\mu_{\mathsf k}(B(x, r))$, that is, $\mu_{\mathsf k}$ volume of the ball $B(x, r)$. Now 
from \cite[Theorem~1.1]{DA23} there exist constants $0<c_u<c_l$ such that
\begin{equation}\label{E2.15}
\frac{1}{C}\, (\omega(x, \sqrt{t}))^{-1} e^{-c_l\frac{{\rm d}(x, y)^2}{t}} \Lambda(x, y, t)\leq
h_t(x, y) \leq C\, (\omega(x, \sqrt{t}))^{-1} e^{-c_u\frac{{\rm d}(x, y)^2}{t}} \Lambda(x, y, t)\quad t>0,
\end{equation}
where $C$ is a universal constant and $\Lambda$ is an appropriate positive function defined by \cite[eq (1.7)]{DA23}. We set
$\eta=\frac{c_u}{4c_l}\in (0, 1)$. For ${\rm d}(x_1, y)\geq {\rm d}(x_1, x_2)$, implying 
$2{\rm d}(x_1, y)\geq {\rm d}(x_2, y)$, we have
$$e^{-c_u\frac{{\rm d}(x_1, y)^2}{\eta R}}\leq e^{-c_l\frac{{\rm d}(x_2, y)^2}{R}},$$
and for ${\rm d}(x_1, y)< {\rm d}(x_1, x_2)$, implying ${\rm d}(x_2, y)< 2 {\rm d}(x_1, x_2)$
$$e^{-c_u\frac{{\rm d}(x_1, y)^2}{\eta R}}\leq 1\leq e^{c_l\frac{4{\rm d}(x_1,x_2)^2}{R}} e^{-c_l\frac{{\rm d}(x_2, y)^2}{R}}.$$
Again, since $\omega(x, r)\simeq r^n \prod_{\alpha\in\mathcal{R}_+}(|\langle x, \alpha\rangle| + r)^{2 \mathsf{k}(\alpha)}$, 
for some constant $\kappa_1=\kappa(x_1, x_2, R)$ we have
$$(\omega(x_1, \sqrt{\eta R}))^{-1} \leq \kappa_1  (\omega(x_2, \sqrt{R}))^{-1}.$$
Also, since for any reflection $\sigma_\alpha$ we have
$$ 1+ \frac{|x_2-\sigma_\alpha(y)|}{\sqrt{R}}\leq \left(1+\frac{|x_1-x_2|}{\sqrt{R}}\right)\left(1+ \frac{|x_1-\sigma_\alpha(y)|}{\sqrt{R}}\right),$$
from the definition of $\Lambda$ (see \cite[eq. (1.7) and (1.8)]{DA23}) it follows that
$$\Lambda(x_1, y, \eta R)\leq \Lambda(x_1, y, R)\leq \left(1+\frac{|x_1-x_2|}{\sqrt{R}}\right)^{4|\mathcal{G}|} \Lambda(x_2, y, R)$$
 Combining these estimate in \eqref{E2.15} we see that the first assertion in (ii) holds. The second one follows from the lower bound in \cite[Prop.~3.1]{DA23}.

For $R>0,$ $\alpha$ is chosen such that $ t \mapsto t^{-\frac{d_{\mathsf{k}}}{2}} e^{-\frac{r^2}{4 t}}$ is increasing on  $(0, R]$ for all $ r > \sqrt{\alpha R} $. For \( 0 < t < R \), we decompose \( q_t \) as \( q_t = q_t^1 + q_t^2 \), where  
$q_t^1 = q_t \chi_{\{|x| \leq (\alpha R)^{1/2}\}}.$
If \( j_0 \in \mathbb{Z} \) is such that \( 2^{j_0}t < R < 2^{j_0+1}t \), then we have  
\[
\begin{aligned}
q_t^1(x) &\leq q_t(x) \left(\chi_{\{|x| \leq (\alpha t)^{1/2}\}}(x) + \sum_{j=0}^{j_0} \chi_{\{(\alpha 2^j t)^{1/2} \leq |x| \leq (\alpha 2^{j+1} t)^{1/2}\}}(x)\right) \\
&\leq t^{-\frac{d_{\mathsf{k}}}{2}} \chi_{\{|x| \leq (\alpha t)^{1/2}\}}(x) + \sum_{j=0}^{j_0} \left(\alpha 2^j\right)^{\frac{d_{\mathsf{k}}}{2}} e^{-\frac{\alpha}{4} 2^j} \left(\alpha 2^j t\right)^{-\frac{n}{2}} \chi_{\{|x| \leq (\alpha 2^{j+1} t)^{1/2}\}}(x).
\end{aligned}
\]  
Thus, for \( f \geq 0 \), using the monotonicity of Dunkl convolution with respect to radial functions (see \cite[Theorem~6.3.2]{Dai-Xu}),
we obtain  
\[
\sup_{t < R} f *_{\mathsf{k}} q_t(x) \leq C \mathcal{M}^{\mathsf{k}}_{(\alpha R)^{1/2}} f(x),
\]  
where  
\[
C = \alpha^{\frac{d_{\mathsf{k}}}{2}} + \sum_{j=0}^\infty \left(\alpha 2^j\right)^{\frac{d_{\mathsf{k}}}{2}} e^{-\frac{\alpha}{4} 2^j} < \infty.
\]  
On the other hand, since \( q_t^2(x) \) is increasing in the time interval \( (0, R) \), we have  
\[
\sup_{0 < t < R} q_t^2 *_{\mathsf{k}} f(x) = q_R^2 *_{\mathsf{k}} f(x) \leq q_R *_{\mathsf{k}} f(x).
\]  
Combining these estimates, we obtain  
\[
D_R^* f(x) \leq C \left(\mathcal{M}^{\mathsf{k}}_{(\alpha R)^{1/2}} f(x) + q_R *_{\mathsf{k}} f(x)\right).
\]
Thus (iii) also holds.
\end{proof}

The main result in the Dunkl setting is as follows.

\begin{theorem}\label{T-Dunkl}
Let $v$ be a positive weight in $\mathbb{R}^n$,  $q_t$ be the heat kernel as mentioned above and $1 \leq p<\infty$. 
Then the  following statements are equivalent:
\begin{enumerate}
\setlength\itemsep{1em}
\item[\hypertarget{1}{(1)}]
 There exists \(R>0\) and a weight \(u\) such that the operator \(D^*_R\) maps \(L_v^p(\mathbb{R}^n, \mu_{\mathsf k})\) into \(L_u^p(\mathbb{R}^n, \mu_{\mathsf k})\) for \(p > 1,\) and maps \(L_v^1(\mathbb{R}^n, \mu_{\mathsf k})\) into weak \(L_u^1(\mathbb{R}^n, \mu_{\mathsf k})\) when \(p=1\).
\item[\hypertarget{2}{(2)}]
 There exists \(R>0\) such that \(f *_{\mathsf{k}} q_R(x)\) is finite for all $x$, and the limit \(\lim_{t \rightarrow 0^+} f *_{\mathsf{k}} q_t(x) = f(x)\) exists almost everywhere for all \(f \in L_v^p(\mathbb{R}^n, \mu_{\mathsf k})\).
\item[\hypertarget{3}{(3)}]
 There exists \(R>0\) such that \(f *_{\mathsf{k}} q_R(x)\) is finite for some $x_i\in\mathcal{W}_i$,  for each Weyl chamber $\mathcal{W}_i$.
\item[\hypertarget{4}{(4)}]
 The weight $v\in D_p^{\mathsf{k}}.$
\end{enumerate}
\end{theorem}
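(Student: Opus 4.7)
The plan is to mirror the argument for Theorem \ref{T-1.3}, with Proposition \ref{Assump-Dunkl} playing the role of Assumption \ref{Assump-1.2} and a Dunkl-adapted version of Proposition \ref{blmx} providing the weighted bound for the local Dunkl maximal function $\mathcal{M}^{\mathsf k}_R$. The only structural novelty is that the Harnack comparison in Proposition \ref{Assump-Dunkl}(ii) only propagates inside a single Weyl chamber; this is exactly why conditions (3) and (4) are stated in terms of one point per chamber $\mathcal{W}_i$.

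For (1)$\Rightarrow$(2) I would set
$$\Phi f(x) = \big|\limsup_{t\to 0^+} f *_{\mathsf k} q_t(x) - \liminf_{t\to 0^+} f *_{\mathsf k} q_t(x)\big|,$$
observe $\Phi f \leq 2 D^*_R f$, and combine Proposition \ref{Assump-Dunkl}(i) with the density of $C_c(\mathbb{R}^n)$ in $L^p_v(\mathbb{R}^n, \mu_{\mathsf k})$: decomposing $f = f_1 + f_2$ with $f_1 \in C_c$ and $\|f_2\|_{L^p_v}$ arbitrarily small, the weak bound on $D^*_R$ forces $\Phi f = 0$ a.e. Finiteness of $f *_{\mathsf k} q_R(x)$ at \emph{every} $x$ is recovered by combining weak boundedness (giving finiteness $\mu_u$-a.e.) with the chamberwise Harnack of Proposition \ref{Assump-Dunkl}(ii). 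The step (2)$\Rightarrow$(3) is immediate. For (3)$\Rightarrow$(4), reading (3) as: there is $R > 0$ such that for each chamber $\mathcal{W}_i$ one can find $x_i \in \mathcal{W}_i$ with $\int f(y) h_R(x_i, y)\, \mu_{\mathsf k}(dy) < \infty$ for every $f \in L^p_v$, writing $f = g v^{-1/p}$ and invoking duality on $L^p(\mathbb{R}^n, \mu_{\mathsf k})$ yields $v^{-1/p} h_R(x_i, \cdot) \in L^{p'}(\mathbb{R}^n, \mu_{\mathsf k})$, which is precisely \eqref{E2.13}.

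The implication (4)$\Rightarrow$(1) carries the real content. Using Proposition \ref{Assump-Dunkl}(ii) to iterate \eqref{E2.13} throughout each chamber, I first upgrade (4) to
$$\|v^{-1/p} h_{\eta t_0}(x, \cdot)\|_{L^{p'}(\mathbb{R}^n, \mu_{\mathsf k})} < \infty \quad \text{for every } x \in \mathbb{R}^n.$$
Setting $R_1 = R/\gamma$ with $R \leq \eta t_0$, the time-space inequality in Proposition \ref{Assump-Dunkl}(iii) reduces the task to bounding two pieces. The heat-kernel term $f \mapsto f *_{\mathsf k} q_{\gamma R}$ is handled by H\"older's inequality against the pointwise $L^{p'}$-bound, producing, in the spirit of \eqref{equ-1}, a weight $u_2$ with $f *_{\mathsf k} q_{\gamma R} \in L^p_{u_2}$. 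The Dunkl maximal term requires a Dunkl analog of Proposition \ref{blmx}: I would decompose $\mathbb{R}^n = \bigsqcup_{k \geq 0} E_k$ with $E_k = \{2^{k-1} R \leq |x| < 2^k R\}$, localize via $f = f'_k + f''_k$ with $f'_k = f \chi_{B(0, 2^{k+1} R)}$, and use that $\tau_x \chi_{B_r}$ is positive and supported in a controlled ball about $x$ together with a vector-valued weak $(1,1)$ inequality for $\mathcal{M}^{\mathsf k}$ to meet the hypotheses of Lemma \ref{lm-r1} (via Kolmogorov's inequality, Lemma \ref{ki}); this produces local weights $u_{1,k}$ and an aggregated weight $u_1 = \sum_k 2^{-k} \tilde{C}_{E_k}^{-1} u_{1,k} \chi_{E_k}$ such that $\mathcal{M}^{\mathsf k}_{\Gamma(R_1)}$ maps $L^p_v$ into $L^p_{u_1}$ when $p > 1$, and is of weak type $(1,1)$ when $p = 1$ by the scalar weak $(1,1)$ bound for $\mathcal{M}^{\mathsf k}$. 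Taking $u = \min\{u_1, u_2\}$ for $p > 1$ and performing the dyadic $u = w_1 \wedge w_2$ construction from the end of the proof of Theorem \ref{T-1.3} for $p = 1$ finishes (1).

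The main obstacle is the Dunkl version of Proposition \ref{blmx}: because $\tau_x$ is not a genuine translation and need not preserve positivity on non-radial test functions, the classical annular truncation that made $\mathcal{M}^{\mathsf k}_R f''_k$ vanish on $E_k$ must be re-established using the fact that $\mathcal{M}^{\mathsf k}_R$ is defined by convolution with the \emph{radial} indicator $\chi_{B_r}$, so that $\tau_x \chi_{B_r}$ remains non-negative and supported in a ball $B(x, r)$ measured by an appropriate orbit distance. Once this localization and the corresponding vector-valued weak $(1,1)$ inequality for $\mathcal{M}^{\mathsf k}$ are secured from the Dunkl convolution calculus, the remainder of the argument is a transcription of the proof of Theorem \ref{T-1.3}.
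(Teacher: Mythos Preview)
Your proposal is correct and follows essentially the same route as the paper's proof. The paper likewise reduces everything to $(4)\Rightarrow(1)$, handles the heat-kernel piece by duality, and for the Dunkl maximal piece runs the annular decomposition exactly as you describe; the two ingredients you flag as the ``main obstacle'' are supplied there by specific citations --- \cite[Theorem~11]{Deleaval} for the vector-valued weak $(1,1)$ bound on $\mathcal{M}^{\mathsf k}$, and \cite[Theorem~6.3.2 and Theorem~2.3.4]{Dai-Xu} for the support/positivity of $\tau_x\chi_{B_r}$ that makes $\mathcal{M}^{\mathsf k}_R f''_j$ vanish on $E_k$ (your heuristic that the support is controlled by an orbit-type distance is exactly what those results give, via $||x|-|y||\leq r$).
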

\begin{proof}
We will proceed analogous to the proof of Theorem \ref{T-1.3}. Specifically, we shall establish \( (4)\Rightarrow(1) \); the remaining implications will follow from Proposition \ref{Assump-Dunkl} and the proof of Theorem \ref{T-1.3}.

Since the boundedness of the operator \( q_R *_\mathsf{k} f \) follows by duality, it is enough, by Proposition \ref{Assump-Dunkl}(iii), to show that for \( R > 0 \) there exists a weight \( u \) so that the operator \( \mathcal{M}^{\mathsf{k}}_{R} \) maps \( L_v^p(\mathbb{R}^n, \mu_{\mathsf{k}}) \) into \( L_u^p(\mathbb{R}^n, \mu_{\mathsf{k}}) \) for \( p > 1 \), and maps \( L_v^1(\mathbb{R}^n, \mu_{\mathsf{k}}) \) into weak \( L_u^1(\mathbb{R}^n, \mu_{\mathsf{k}}) \) when \( p = 1 \). In view of \cite[Lemma 3.4~$(iv)\Rightarrow(i)$]{vivianiPAMS}, it suffices to show that for any annulus $E_k=\{2^{k-1}R\leq|x|\leq 2^{k} R\}$ and \( 0 < s < p < \infty, \)
\begin{equation}\label{Dunkl-vector}
\left\|\left(\sum_{j \in \mathbb{Z}}\left|\mathcal{M}^{\mathsf{k}}_{R} f_{j}\right|^{p}\right)^{\frac{1}{p}}\right\|_{_{L^s(E_k, \mu_{\mathsf{k}})}} \leq C_{p, s}\left(\sum_{j \in \mathbb{Z}}\left\|f_{j}\right\|_{_{L^p_v(\mathbb{R}^n, \mu_{\mathsf{k}})}}^{p}\right)^{\frac{1}{p}},
\end{equation}
where \( f=\left\{f_j\right\} \) be vector-valued function on \( \mathbb{R}^n \) and $C_{p, s}$ is a constant depending on $p$ and $s$. In view of \cite[Theorem 11]{Deleaval} (see also \cite{Amri-Sifi}), we notice that \( \mathcal{M}^{\mathsf{k}}_{R}\) is a  weak type $(1,1)$ operator in vector-valued setting.  
Let $f_j = f_j \chi_{B(0, 2^{k+1}R)} + f_j \chi_{B^c(0, 2^{k+1}R)}=f'_j + f^{\prime\prime}_j$. Using \cite[Theorem~6.3.2 and Theorem~2.3.4]{DA23} we see that
$$\mathcal{M}^{\mathsf{k}}_{R} f^{\prime\prime}_{j} =0\quad \text{in} \; E_k.$$
Therefore, applying Kolmogorov inequality \cite[Theorem 3.3.1,  p. 59]{Guzman},  we can obtain the estimate \eqref{Dunkl-vector}
(see \cite[Lemma 3.4~$(iv)\Rightarrow(i)$]{vivianiPAMS}). This proves (1).
\end{proof}

\subsection{Laplacian with Hardy potential $\mathcal{L}_b=-\Delta + b |x|^{-2}$}\label{hp}
In this section, we consider the heat equation with a Hardy potential
\begin{equation}\label{E-hardy}
\begin{split}
\partial_t u+ \mathcal{L}_bu=\partial_tu- \Delta u +\frac{b}{|x|^2} u & =0 \quad  \text{in}\; \Rn\times (0, \infty),
\\
u(x, 0) &= f \quad \text{in}\; \Rn,
\end{split}
\end{equation}
where  $b\in\mathbb{R}$.
We assume that
$$n\geq 3, \quad \text{and}\quad D:=b+\frac{(n-2)^2}{4}\geq 0.$$
As well-known \cite{Hardy}, for $1<p<\infty$, there exists a semigroup $\{S^H_t\}$ on $L^p$ whose generator is 
the closure of $(\mathcal{L}_b, C^\infty_c(\Rn\setminus\{0\})$. This semigroup also attains a heat kernel $p_t^H$ and the following
bounds hold \cite[Theorem~6.2]{Hardy}: 
\begin{equation}\label{H-bound}
\frac{1}{C} t^{-\frac{n}{2}} \left[\Bigl(\frac{|x|}{\sqrt{t}}\wedge 1\Bigr)\Bigl(\frac{|y|}{\sqrt{t}}\wedge 1\Bigr)\right]^\upgamma e^{-\frac{|x-y|^2}{\upbeta_2 t}}
\leq p^H_t(x, y)\leq C\, t^{-\frac{n}{2}} \left[\Bigl(\frac{|x|}{\sqrt{t}}\wedge 1\Bigr)\Bigl(\frac{|y|}{\sqrt{t}}\wedge 1\Bigr)\right]^\upgamma e^{-\frac{|x-y|^2}{\upbeta_1 t}}
\end{equation}
for $t>0, \; x, y\neq 0$, where $\upgamma= \sqrt{D} -\frac{n}{2} +1$, $0<\upbeta_2\leq\upbeta_1$ and $C$ is some universal constant. The solution
of \eqref{E-hardy} can be formally written as
$$u(x, t)=p^H_t\circledast f(x):=\int_{\Rn} p_t^H(x, y) f(y)\, dy.$$
Note that $u(x, t)$ is not defined for $x=0$. So we consider
 $\Omega:=\Rn\setminus\{0\}$.
As before, we characterize the class of weight functions $v$ so that  $p^H_t\circledast f(x)$ converges to $f(x)$, as $t\to 0$, almost surely for all $f\in L^p_v(\Omega)$. As earlier, we define the class of weight function as follows: for $1\leq p<\infty$, let
$$D^H_p=\{v>0\; :\; \exists \, t_0>0\; \text{such that}\; p^H_{t_0}(x,\cdot) v^{-\frac{1}{p}}\in L^{p'}(\Omega)\; \text{for some}\; x\in\Omega\}.$$
We also need the following local maximal operator associated to $\mathcal{L}_b=-\Delta+b|x|^{-2}:$
$$\mathsf{H}^*_Rf(x)=\sup_{0<t<R}|p^H_t\circledast f(x)|.$$
Now we state the main result of this section.
\begin{theorem}\label{T8.1}
Suppose $1\leq p<\infty$ and $v$ is a positive weight function in $\Omega$.
The following statements are equivalent.
\begin{itemize}
\setlength\itemsep{1em}
\item[\hypertarget{1}{(1)}]  There exists $R>0$ and a positive weight $u$ such that the operator $\mathsf{H}^*_R$
 maps $L_v^p(\Omega)$ into $L_u^p(\Omega)$ for $p > 1$, and maps $L_v^1(\Omega)$ into weak $L_u^1(\Omega)$ for $p=1$.
\item[\hypertarget{2}{(2)}] There exists $R>0$ such that $p^H_R\circledast f(x)$ is finite for all $x\in\Omega$, and the limit 
$\lim_{t \rightarrow 0^+} p^H_t\circledast f(x) = f(x)$  almost everywhere, for all $f \in L_v^p(\Omega)$.
\item[\hypertarget{3}{(3)}] There exists $R>0$ such that 
$p^H_R\circledast f(x)$ is finite for some $x$ and for all $f\in L^p_v(\Omega)$.
\item[\hypertarget{4}{(4)}] The weight $v\in D^H_p.$
\end{itemize}
\end{theorem}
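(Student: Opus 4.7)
The plan is to mirror the structure of the proof of Theorem \ref{T-1.3}, with the implications \hyperlink{1}{(1)} $\Rightarrow$ \hyperlink{2}{(2)} $\Rightarrow$ \hyperlink{3}{(3)} $\Rightarrow$ \hyperlink{4}{(4)} handled essentially as before once vague convergence on a dense subclass is secured. The bulk of the work is \hyperlink{4}{(4)} $\Rightarrow$ \hyperlink{1}{(1)}, for which I will establish a version of Assumption~\ref{Assump-1.2} adapted to the singularities at the origin present in \eqref{H-bound}. I will then combine these three ingredients with the generic argument of Theorem~\ref{T-1.3}.

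For vague convergence, the obstacle is that $\{p^H_t\}$ is \emph{not} conservative, so the argument used in the classical case is unavailable. Instead, for $f\in C_c(\Omega)$ I would compare $p^H_t\circledast f$ with the classical heat semigroup $e^{t\Delta}f$ by Duhamel's formula:
\[
p^H_t\circledast f(x)-e^{t\Delta}f(x)=-b\int_0^t \int_{\Rn} p^H_{t-s}(x,y)\,|y|^{-2}\,e^{s\Delta}f(y)\,dy\,ds,
\]
and then use the compact support of $f$ (bounded away from $0$), together with the upper bound in \eqref{H-bound}, to see that the right-hand side tends to $0$ as $t\to 0^+$ at each fixed $x\in\Omega$. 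This is the content of Lemma~\ref{L8.2}. For the parabolic Harnack-type comparison $p^H_{\eta R}(x_1,y)\le C\,p^H_R(x_2,y)$ on $\Omega$ I will use the two-sided bound \eqref{H-bound} directly: the Gaussian part is handled as in Example~\ref{Eg-2.1} by choosing $\eta=\upbeta_2/(4\upbeta_1)$, while the singular prefactors contribute factors that depend on $x_1,x_2,R$ but remain locally bounded away from $0$; this explains why constants $C=C(R,x_1,x_2)$ (rather than uniform in $x_i$) are the best one can expect here.

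The core difficulty is the time-space maximal inequality. Because of the singular factor $(|x|/\sqrt{t}\wedge 1)^{\upgamma}(|y|/\sqrt{t}\wedge 1)^{\upgamma}$, the classical Hardy-Littlewood maximal function no longer dominates $\mathsf{H}^*_R$ uniformly; instead I introduce a weighted local maximal function $\mathcal{M}^H_R$ (cf.\ \eqref{H-LMF}) that absorbs the $y$-factor $(|y|/\sqrt{t}\wedge 1)^{\upgamma}$ into the averaging kernel, while the $x$-factor $(|x|/\sqrt{t}\wedge 1)^{\upgamma}$ gets incorporated as the locally bounded multiplier $\xi_1(x,R)$ on $\Omega$. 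Splitting the heat kernel as in Example~\ref{Eg9.6} into dyadic annuli $\{(\alpha 2^k t)^{1/2}\le |x-y|\le (\alpha 2^{k+1} t)^{1/2}\}$ together with the tail region $|x-y|>\sqrt{\alpha R}$, and using the monotonicity of $t\mapsto t^{-n/2}e^{-r^2/(\upbeta_1 t)}$ for $r$ large, I obtain
\[
\mathsf{H}^*_R f(x)\le \xi_1(x,R)\,\mathcal{M}^H_{\Gamma(R)}f(x)+\xi_2(x,R)\,p^H_{\gamma R}\circledast f(x),
\]
which is Lemma~\ref{L8.3}. The boundedness $\mathcal{M}^H_R:L^p_v(\Omega)\to L^p_u(\Omega)$ (Lemma~\ref{L8.4}) is then proved following the template of Proposition~\ref{blmx}, with a weighted version of the Fefferman-Stein vector-valued inequality applied to the modified averaging operator.

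With these three ingredients in hand, the implication \hyperlink{4}{(4)} $\Rightarrow$ \hyperlink{1}{(1)} follows: by H\"older the second term $\xi_2(x,R)\,p^H_{\gamma R}\circledast f(x)$ is handled using $v\in D^H_p$ and produces a weight $u_2$ exactly as in \eqref{equ-1}, while $\xi_1(x,R)\,\mathcal{M}^H_{\Gamma(R)}f(x)$ is handled via Lemma~\ref{L8.4}, with the $x$-dependent multiplier $\xi_1(x,R)$ absorbed into a new weight $u_1$ through the annular decomposition of $\Omega$ that was used in the case $p=1$ of Theorem~\ref{T-1.3}. Taking $u=u_1\wedge u_2$ finishes \hyperlink{1}{(1)}. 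The implication \hyperlink{1}{(1)} $\Rightarrow$ \hyperlink{2}{(2)} is then the same density/oscillation argument as in Theorem~\ref{T-1.3}, now with $C_c(\Omega)$ in place of $C_c(\cX)$ (using Lemma~\ref{L8.2} for convergence on this dense set). Finally, \hyperlink{2}{(2)} $\Rightarrow$ \hyperlink{3}{(3)} is trivial, and \hyperlink{3}{(3)} $\Rightarrow$ \hyperlink{4}{(4)} is the duality argument of Theorem~\ref{T-1.3} applied pointwise on $\Omega$. The main obstacle I anticipate is the careful bookkeeping in Lemma~\ref{L8.3}, where the two singular prefactors must be separated cleanly so that only the $x$-factor leaks into $\xi_1,\xi_2$, keeping $\mathcal{M}^H_R$ a legitimate maximal function of $f$ alone.
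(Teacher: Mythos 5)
Your proposal follows essentially the same route as the paper: Lemma~\ref{L8.2} via Duhamel for vague convergence on $C_c(\Omega)$, the Harnack-type comparison read off from \eqref{H-bound}, the dyadic-annuli domination $\mathsf{H}^*_Rf\le \xi_1\mathcal{M}^H_{\Gamma(R)}f+\xi_2\,p^H_{\gamma R}\circledast f$ of Lemma~\ref{L8.3}, and then the Theorem~\ref{T-1.3} machinery. The only (harmless) deviation is in Lemma~\ref{L8.4}: rather than re-running a weighted Fefferman--Stein argument, the paper simply observes that $\mathcal{M}^H_Rf=\mathcal{M}_R\bigl[(|\cdot|\wedge 1)^{\upgamma}f\bigr]$ and transfers the problem to Proposition~\ref{blmx} with the modified weight $\tilde v=v\,(|\cdot|\wedge 1)^{-p\upgamma}$, which is shorter and uses only results already proved.
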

To prove  Theorem~\ref{T8.1} we broadly follow the proof of Theorem~\ref{T-1.3}. Since the heat kernel estimate \eqref{H-bound} introduces 
additional singularity (compare with \eqref{E2.1}) and  $p^H_t\circledast 1\neq 1$, we need more delicate estimates to prove our result.
Let us start with the following lemma.
\begin{lemma}\label{L8.2}
Let $f\in C_c(\Omega)$. Then $\lim_{t\to 0} p^H_t\circledast f(x)=f(x)$ for all $x\in\Omega$.
\end{lemma}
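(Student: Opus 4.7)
The plan is to reduce the pointwise convergence for $p^H_t\circledast f$ to the classical convergence $e^{t\Delta}f(x)\to f(x)$ for the standard heat semigroup, by a Duhamel comparison. Since $u(x,t):=p^H_t\circledast f(x)$ and $v(x,t):=e^{t\Delta}f(x)$ formally satisfy $\partial_t u=\Delta u - b|x|^{-2}u$ and $\partial_t v=\Delta v$ respectively, both with initial data $f$, one expects
\[
p^H_t\circledast f(x) - e^{t\Delta}f(x) \;=\; -b\int_0^t e^{(t-s)\Delta}\!\Bigl(\frac{p^H_s\circledast f}{|\cdot|^2}\Bigr)(x)\,ds.
\]
I would first make this identity rigorous, for $x\in\Omega$ and small $t$, by pairing both sides against test functions in $C_c^\infty(\Omega)$ and exploiting the smoothness of $p^H_t(x,\cdot)$ on $\Omega$ together with the bound \eqref{H-bound} to justify Fubini.

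Since $f\in C_c(\Omega)$ is continuous and bounded, $e^{t\Delta}f(x)\to f(x)$ for every $x$. The task therefore reduces to showing that the Duhamel remainder
\[
R(x,t)\;:=\;\int_0^t e^{(t-s)\Delta}\!\Bigl(\frac{p^H_s\circledast f}{|\cdot|^2}\Bigr)(x)\,ds
\]
tends to $0$ as $t\to 0^+$ for each fixed $x\in\Omega$. The key ingredient is a pointwise upper bound on $p^H_s\circledast f$. Let $K=\operatorname{supp} f$ and $\epsilon=\operatorname{dist}(0,K)>0$. For $0<s<\epsilon^2$ one has $|z|>\sqrt s$ on $K$, so $\bigl((|z|/\sqrt s)\wedge 1\bigr)^{\upgamma}=1$ there, and the upper bound in \eqref{H-bound} combined with a Gaussian integration yields
\[
|p^H_s\circledast f(y)|\;\leq\; C\|f\|_\infty\bigl((|y|/\sqrt s)\wedge 1\bigr)^{\upgamma}.
\]

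Next, for fixed $x\neq 0$ I would split the $y$-integration at $|y|=|x|/2$. On the outer region $|y|>|x|/2$ the weight $|y|^{-2}$ is bounded by $4|x|^{-2}$ and the bracket $\bigl((|y|/\sqrt s)\wedge 1\bigr)^\upgamma$ is controlled by a constant depending on $x$ and $\upgamma$, so integrating the Gaussian contributes $O(t)\|f\|_\infty$. On the inner region $|y|\leq|x|/2$ one has $|x-y|\geq|x|/2$, producing a Gaussian factor $e^{-|x|^2/(16(t-s))}$ that beats any algebraic singularity at the origin. When $\upgamma\geq 0$ the bracket is $\leq 1$ and the remaining integrand is $\lesssim|y|^{-2}$, integrable on $B(0,|x|/2)$ since $n\geq 3$. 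When $\upgamma<0$, which can only occur for $b<0$, the worst singularity is $|y|^{-2-|\upgamma|}$, still locally integrable because $|\upgamma|\leq n/2-1<n-2$ for $n\geq 3$; the accompanying factor $s^{|\upgamma|/2}$ is harmless. In all cases one obtains $|R(x,t)|\leq C_{x,f}\,t^{\alpha}$ for some $\alpha>0$, which concludes the proof.

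The main technical obstacle is the rigorous justification of the Duhamel identity itself, which demands enough integrability of $|\cdot|^{-2}p^H_s\circledast f$ uniformly on compact $s$-subintervals of $(0,T)$ to interchange integrals; the pointwise bound above, together with the inequality $|\upgamma|<n-2$, provides exactly this. Everything else amounts to careful but routine Gaussian estimation against $|y|^{-2}$-type weights.
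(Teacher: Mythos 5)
Your proposal is correct and follows essentially the same route as the paper: a Duhamel comparison of $p^H_t\circledast f$ with the classical heat semigroup, reduction to showing the remainder term vanishes as $t\to0$, and a split of the spatial integral near and away from the origin, with the singular case $\upgamma<0$ handled via the integrability condition $\upgamma+n-2>0$ (equivalently $|\upgamma|<n-2$). The only cosmetic difference is that the paper symmetrizes the kernel-level Duhamel identity before estimating (so the Hardy kernel links $x$ to the singular variable and the Gaussian links it to $\operatorname{supp}f$, rather than the other way around), and justifies the identity by citing a specific result together with uniqueness of distributional solutions; your estimates are the mirror image and equally valid.
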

\begin{proof}
Denote by $q(z)=\frac{b}{|z|^2}$ and recall that $\hat{p}_t(x)= (4\pi t)^{-\frac{n}{2}}e^{-\frac{|x|^2}{4t}}$. By Duhamel's principle 
we know that
\begin{equation}\label{Duhamel}
p^H_t(x, y)=\hat{p}_t(x-y) + \int_0^t \int_{\Rn} \hat{p}_{t-s} (x-z) q(z) p^H_s(z, y)\, dz\, ds.
\end{equation}
This can be seen as follows: For any $f\in C^\infty_c(\Omega)$, $u(x, t):=p^H_t\circledast f(x)$ is in $H^2_{\loc}(\Omega)$ \cite{Hardy}. Hence, by semigroup theory
$$ \partial_t u-\Delta u= -\frac{b}{|x|^2} u.$$
Since $|u(x, t)|\lesssim (|x|\wedge 1)^\upgamma$ and $\sup_{t\in (0, T)}|u(x, t)|$ decays exponentially as $|x|\to\infty$ 
using \eqref{H-bound}, combining with $\upgamma-2+n>0$, we see that $|x|^{-2}u\in L^{1}(\Rn\times(0, T))$. From 
\cite[Theorem~4.8]{Folland}
and uniqueness of distributional solution, it follows that 
$$u(x, t)=\hat{p}_t\ast f(x) + 
\int_0^t \hat{p}_{t-s}\ast [q(\cdot)u(\cdot, s)](x) dx,$$
where $\ast$ denotes the convolution. 
From the density of $C^\infty_c(\Omega)$ we then have \eqref{Duhamel}.
Using the symmetry $p^H_t(x, y)=p^H_t(y,x)$, \eqref{Duhamel} also gives
$$p^H_t(x, y)=\hat{p}_t(x-y) + \int_0^t \int_{\Rn} \hat{p}_{t-s} (y-z) q(z) p^H_s(x, z) dz ds.$$
Since 
$$ \lim_{t\to 0}\int_{\Rn} \hat{p}_t(x-y)f(y) dy=f(x)\quad \text{for all}\; x,$$
to complete the proof, it is enough to show that
$$I(t):= \int_{\Rn} \int_0^t \int_{\Rn} \hat{p}_{t-s} (y-z) |q(z)|\, p^H_s(x, z)  |f(y)| dz\, ds\, dy\to 0\quad \text{as}\quad t\to 0,$$
for $x\in\Omega$. Fix $x\in\Omega$. Since  $f \in C_c(\Omega)$, there exists $\delta_0 > 0$ such that  
\[
\operatorname{supp}(f) \subset B_{\delta_0}^c(0).
\]
We can decompose  $I$ into two parts as follows:  
\[
\begin{aligned}
I(t) &= \int_{|y| \geq \delta_0} \int_0^t \int_{|z| \geq \delta_x} \hat{p}_{t-s} (y-z) |q(z)|\, p^H_s(x, z)  |f(y)| \, dz \, ds \, dy \\
  &\quad + \int_{|y| \geq \delta_0} \int_0^t \int_{|z| < \delta_x} \hat{p}_{t-s} (y-z) |q(z)|\, p^H_s(x, z)  |f(y)| \, dz \, ds \, dy,
 \\
 &:= I_{1}(t) + I_{2}(t),
\end{aligned}
\]
where $\delta_{x}=\left(\frac{\delta_0}{2} \wedge \frac{|x|}{2}\right)$.

Consider  $I_1$ first. Since $|z| \geq \delta_x $, it follows that   $|q(z)|\leq \frac{|b|}{\delta_x^2} $. Moreover, the condition \( \delta_x \leq |z| \) implies 
$\frac{\delta_x}{\sqrt{t}} \leq \frac{|z|}{\sqrt{t}}$, giving us 
\[
\left(\quad \frac{\delta_x}{\sqrt{t}} \wedge 1\right) \leq \left(\frac{|z|}{\sqrt{t}} \wedge 1\right) \leq 1.
\]
Thus, we have  
\[
\left( \frac{|z|}{\sqrt{t}} \wedge 1 \right)^\upgamma \leq  
\begin{cases} 
1 & \text{for}\; \upgamma \geq 0, \\ 
\left( \frac{\delta_x}{\sqrt{t}} \wedge 1 \right)^\upgamma & \text{for}\; \upgamma < 0. 
\end{cases}
\]
For  $t < \delta_x^2$ we then  obtain
\[
\left( \frac{|x|}{\sqrt{t}} \wedge 1 \right)^\upgamma \leq 1 \quad \text{and} \quad \left( \frac{|z|}{\sqrt{t}} \wedge 1 \right)^\upgamma \leq 1.
\]
Consequently, using  \eqref{H-bound}, we estimate 
\[
\begin{aligned}
I_1(t) & = \int_{|y| \geq \delta_0} \int_0^t \int_{|z| \geq \delta_x} \hat{p}_{t-s} (y-z) |q(z)|\, p^H_s(x, z)  |f(y)|  \, dz \, ds \, dy 
\\
&\leq \frac{C |b|}{\delta^2_x} \int_{|y| \geq \delta_0} \int_0^t \int_{\mathbb{R}^n} s^{-\frac{n}{2}} e^{-\frac{|x-z|^2}{\upbeta_1 s}} (t-s)^{-\frac{n}{2}} e^{-\frac{|z-y|^2}{4(t-s)}} |f(y)| \, dz \, ds \, dy 
\\
&\leq  \frac{C |b|}{\delta^2_x} \int_{|y| \geq \delta_0} \int_0^t \int_{\Rn} s^{-\frac{n}{2}} e^{-\frac{|x-z|^2}{c s}} (t-s)^{-\frac{n}{2}} e^{-\frac{|z-y|^2}{c(t-s)}} |f(y)|\, dz \, ds \, dy,
\end{aligned}
\]
where $c = \upbeta_1 \vee  4$. Using the Markov property  of heat kernel
\[
\int_{\mathbb{R}^n} \hat{p}_s(x-z) \hat{p}_{t-s}(z-y) \, dz = \hat{p}_{t}(x-y),
\]
we deduce that 
\[
I_1(t) \leq \frac{C |b|}{\delta^2_x} \int_{|y| \geq \delta_0} \int_0^t \hat{p}_{\frac{c}{4}t}(x-y) |f(y)| \, ds \, dy 
\leq t   \frac{C |b|}{\delta^2_x} \|f\|_\infty \to 0 \quad \text{as } t \to 0.
\]

Next we consider  $I_2$.  Letting $t < |x|^2$, it follows that  
\[
\left(\frac{|x|}{\sqrt{t}} \wedge 1\right)^\upgamma \leq 1.
\]
Since $|z|<\delta_x$ and $|y|\geq \delta_0$, it follows that $ \frac{\delta_0}{2} < |y| - |z| \leq |y - z| $ and \( \frac{|x|}{2} \leq |x| - |z| \leq |x - z| \), 
giving us 
\begin{equation}\label{E8.3}
p_s^H(x, z) |q(z)| \hat{p}_{t-s}(z-y) 
\leq C(\delta_0) \left(\frac{|z|}{\sqrt{s}} \wedge 1\right)^\upgamma \frac{|b|}{|z|^2} s^{-\frac{n}{2}} e^{-\frac{|x|^2}{ 4\upbeta_1 s}},
\end{equation}
where we used the fact that $\sup_{t>0} t^{-\frac{n}{2}} e^{-\frac{\delta^2_0}{4 t}}<\infty$.
For $\upgamma \geq 0$, we see that  
\[
p_s^H(x, z) |q(z)| \hat{p}_{t-s}(z-y) \leq C(x, \delta_0) \frac{1}{|z|^2},
\]
where the constant $C(x, \delta_0)$ depends on $x, \delta_0$. Hence,  
\[
I_2(t) \leq C(x, \delta_0)\, \norm{f}_\infty\, |{\rm supp}(f)|\, t \int_{|z| < \delta_x} \frac{1}{|z|^2} \, dz   \to 0 \quad \text{as } t \to 0.
\]
Now suppose $\upgamma<0$. Since
$$
\left(\frac{|z|}{\sqrt{s}} \wedge 1\right)^\upgamma
\leq \left\{\begin{array}{ll}
\frac{|z|^\gamma}{s^{\gamma/2}} & \text{when}\; |z| \leq \sqrt{s},
\\
1 & \text{when}\; |z| \geq \sqrt{s},
\end{array}
\right.
$$
it follows that (since $\upgamma<0$)
\[
\left(\frac{|z|}{\sqrt{s}} \wedge 1\right)^\upgamma \leq \frac{|z|^\upgamma}{s^{\upgamma/2}} + 1\leq t^{-\upgamma/2}|z|^\upgamma + 1.
\]
Since  $\upgamma - 2 + n= n-2 +\sqrt{D}-\frac{n}{2} +1\geq \frac{n}{2} -1> 0$ , $|z|^{\upgamma-2}$ becomes integrable at $0$.
Thus using \eqref{E8.3} and the fact $\sup_{s>0} s^{-\frac{n}{2}}e^{-\frac{|x|^2}{4\upbeta_1 s}}<\infty$,
we obtain  
$I_2(t) \to 0$ as $t\to 0$. Combining the estimates of $I_1$ and $I_2$ we complete the proof.
\end{proof}

For our next result, we introduce a local Hardy-Littlewood functional with a weight function.
\begin{equation}\label{H-LMF}
\mathcal{M}^H_R f(x):=\sup_{s\in (0, R)}\, \frac{1}{|B(x,r)|}\int_{B(x,s)} (\abs{y}\wedge 1)^\upgamma f(y)\, dy.
\end{equation}
The next lemma is a counterpart of Assumption~\ref{Assump-1.2}(iii) in the Hardy operator setting.
\begin{lemma}\label{L8.3}
There exist  locally bounded functions $\xi_1, \xi_2:\Omega\times(0, \infty)\to (1, \infty)$, 
 $\xi_i(\cdot, r), i=1,2$ are locally bounded in $\Omega$ for each $r>0$, and a  function $\Gamma:(0, \infty)\to(0, \infty)$
such that
for any $R>0$ we have
$$\mathsf{H}^*_R f(x)\leq \xi_1(x, R) \mathcal{M}^H_{\Gamma(R)} f(x) + \xi_2(x, R) \int_{\Rn} P^H_{\gamma R}(x, y) f(y) dy\quad \text{for}\; x\in\Omega,$$
for every measurable $f\geq 0$, where $\gamma=\frac{\upbeta_1}{\upbeta_2}$ .
\end{lemma}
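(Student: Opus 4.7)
The strategy adapts that of Lemma~\ref{L2.3} and Example~\ref{Eg9.6}, with additional care for the Hardy factor $[(|x|/\sqrt{t})\wedge 1]^\upgamma [(|y|/\sqrt{t})\wedge 1]^\upgamma$ in \eqref{H-bound}. The elementary identity $((|a|/\sqrt{t})\wedge 1)^\upgamma = (|a|\wedge\sqrt{t})^\upgamma/t^{\upgamma/2}$, valid for either sign of $\upgamma$, recasts the upper bound as
\[
p^H_t(x,y) \lesssim t^{-n/2-\upgamma}(|x|\wedge\sqrt{t})^\upgamma(|y|\wedge\sqrt{t})^\upgamma e^{-|x-y|^2/(\upbeta_1 t)},
\]
with an analogous lower bound for $p^H_{\gamma R}$ where $\gamma=\upbeta_1/\upbeta_2$ makes the two exponents coincide. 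Since $\upgamma>-n/2$, I can pick $\alpha$ large enough that $t\mapsto t^{-n/2-\upgamma}e^{-r^2/(\upbeta_1 t)}$ is nondecreasing on $(0,R]$ whenever $r^2\geq\alpha R$, and moreover $|x-y|>\sqrt{\alpha R}$ forces $\max(|x|,|y|)\geq\sqrt{t}$. Split $p^H_t = K^1_t + K^2_t$ with $K^1_t := p^H_t\,\chi_{\{|x-y|\leq\sqrt{\alpha R}\}}$ (local part).

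For the far part $K^2_t$, the monotonicity gives $t^{-n/2-\upgamma}e^{-|x-y|^2/(\upbeta_1 t)}\leq R^{-n/2-\upgamma}e^{-|x-y|^2/(\upbeta_1 R)}$. A case analysis on the positions of $|x|,|y|$ relative to $\sqrt{t}$ and $\sqrt{\gamma R}$ shows that the ratio of the Hardy factor $(|x|\wedge\sqrt{t})^\upgamma(|y|\wedge\sqrt{t})^\upgamma$ to its $\gamma R$-counterpart is bounded by a locally bounded $\xi_2(x,R)$: when both $|x|,|y|\geq\sqrt{\gamma R}$ the ratio collapses to a universal constant; the only region in which it could otherwise blow up (when $\upgamma>0$ and $|y|$ is just above $\sqrt{t}$ with $|y|<\sqrt{\gamma R}$) is tamed because the constraint $|x-y|>\sqrt{\alpha R}$ together with $|y|<\sqrt{\gamma R}$ forces $|x|>\sqrt{\alpha R}-\sqrt{\gamma R}$, yielding the required positive lower bound on $|x|$. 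Combining these yields $K^2_t(x,y)\leq \xi_2(x,R)\,p^H_{\gamma R}(x,y)$ pointwise.

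For the local part $K^1_t$, I dyadically decompose into annuli $E_0=\{|x-y|\leq\sqrt{\alpha t}\}$ and $E_j=\{\sqrt{\alpha 2^{j-1}t}<|x-y|\leq\sqrt{\alpha 2^j t}\}$ for $1\leq j\leq j_0+1$, where $2^{j_0}t\leq R<2^{j_0+1}t$. The Gaussian factor on $E_j$ is bounded by $e^{-\alpha 2^{j-1}/\upbeta_1}$, which beats the polynomial volume growth $|E_j|\lesssim(2^j t)^{n/2}$ so that the geometric series over $j$ converges. Next I split by whether $|y|\geq|x|/2$ or $|y|<|x|/2$: on the former, $|y|\sim|x|$ makes $(|y|\wedge\sqrt{t})^\upgamma$ comparable to $(|y|\wedge 1)^\upgamma$ up to an $(x,R)$-dependent constant, producing the bound $\xi_1(x,R)\,\mathcal{M}^H_{\Gamma(R)}f(x)$ with $\Gamma(R)=\sqrt{2\alpha R}$; on the latter, $|x-y|\geq|x|/2$ supplies a sufficiently large exponent that a comparison with $p^H_{\gamma R}$ analogous to the far-part argument applies, contributing instead to the $\xi_2\int p^H_{\gamma R}f\,dy$ term. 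The main technical obstacle is the intricate pointwise comparison of the Hardy factors, which must be handled separately for $\upgamma\geq 0$ and $\upgamma<0$ and across several regimes of $|x|,|y|,\sqrt{t},\sqrt{\gamma R}$; the crucial flexibility is that $\xi_1,\xi_2$ need only be locally bounded in $(x,R)$ on $\Omega\times(0,\infty)$, so the problematic $y$-regions where the pointwise ratios would diverge can be redistributed between the $\mathcal{M}^H$ term and the $p^H_{\gamma R}$ term.
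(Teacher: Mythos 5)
Your overall architecture is the paper's: split at $|x-y|=\sqrt{\alpha R}$ with $\alpha$ chosen for time-monotonicity of the Gaussian piece, dyadic annuli plus the weighted maximal function $\mathcal{M}^H$ for the near part, and a pointwise comparison with $p^H_{\gamma R}$ (using $\gamma=\upbeta_1/\upbeta_2$ to match exponents) for the far part, together with a case analysis in the sign of $\upgamma$ and the split $|y|\gtrless \frac{|x|}{2}\wedge 1$. Your choice to route the region $\{|x-y|\le\sqrt{\alpha R},\,|y|<|x|/2\}$ into the $p^H_{\gamma R}$ term is a legitimate variation; the paper instead keeps it in the $\mathcal{M}^H$ term by absorbing $\sup_{t\le R}t^{-\frac{n+\upgamma}{2}}e^{-|x|^2/(\upbeta_1 t)}$ into $\xi_1(x,R)$. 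Either works.

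There is, however, a genuine error in your far-part argument, caused by the reformulation $\bigl((|a|/\sqrt t)\wedge 1\bigr)^\upgamma=(|a|\wedge\sqrt t)^\upgamma t^{-\upgamma/2}$ followed by treating the $t$-powers and the factors $(|a|\wedge\sqrt t)^\upgamma$ separately. For $\upgamma<0$ and $|x|,|y|\ge\sqrt{\gamma R}$, the ratio you claim "collapses to a universal constant" is
$\frac{(|x|\wedge\sqrt t)^\upgamma(|y|\wedge\sqrt t)^\upgamma}{(|x|\wedge\sqrt{\gamma R})^\upgamma(|y|\wedge\sqrt{\gamma R})^\upgamma}=(t/\gamma R)^{\upgamma}=(\gamma R/t)^{|\upgamma|}$,
which diverges as $t\to0$; indeed $\sup_{t\le R}(|x|\wedge\sqrt t)^\upgamma=+\infty$ for $\upgamma<0$, so the two suprema cannot be taken separately. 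You have also inverted which sign of $\upgamma$ is dangerous: in your formulation the case $\upgamma>0$ is the harmless one (there $(|a|\wedge\sqrt t)^\upgamma\le(|a|\wedge\sqrt{\gamma R})^\upgamma$, so the ratio is $\le 1$ and no lower bound on $|x|$ is needed), while $\upgamma<0$ is where the split fails. The repair is to keep the combined factor $\bigl[\bigl(\frac{|x|}{\sqrt t}\wedge1\bigr)\bigl(\frac{|y|}{\sqrt t}\wedge1\bigr)\bigr]^\upgamma$ intact: for $\upgamma\le0$ it is nondecreasing in $t$, and pairing it with the nondecreasing map $t\mapsto t^{-n/2}e^{-r^2/(\upbeta_1 t)}$ (exponent $n/2$, not $n/2+\upgamma$) bounds the whole kernel by its value at $t=R$, which is then comparable to $p^H_{\gamma R}$ with a constant $\xi_2$. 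This is exactly how the paper's Case 1 proceeds. With that correction your proof goes through; the local part, including the convergent geometric series over the annuli and the local boundedness (but not global boundedness near $|x|=0$) of $\xi_1,\xi_2$ on $\Omega$, is sound.
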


\begin{proof}
We split the proof in two cases.

\noindent{\bf  Case 1.} Let $\upgamma\leq 0$. Let us denote  
\[
\chi_R(x, y) = \chi_{\{|x - y| \leq \sqrt{\alpha R}\}}(x, y),
\]  
where $\alpha$ is chosen such that $ t \mapsto t^{-\frac{n}{2}} e^{-\frac{r^2}{\upbeta_1 t}}$ is increasing on  $(0, R]$
 for all $ r > \sqrt{\alpha R} $. In particular, we may choose any $\alpha\geq \frac{n\upbeta_1}{2}$. We also denote by
 $$\chi^c_R(x, y)=\chi_{\{|x - y| > \sqrt{\alpha R}\}}(x, y).$$
 Due to our choice of $\alpha$, we have for $\gamma=\frac{\upbeta_1}{\upbeta_2}$ that
 \begin{align*}
 \sup_{t\leq R} p^H_t(x, y)\chi^c_R(x, y) & \leq C\,  \left[\left(\frac{|x|}{\sqrt{R}}\wedge 1\right)\left(\frac{|y|}{\sqrt{R}}\wedge 1\right)\right]^\upgamma
 R^{-\frac{n}{2}} e^{-\frac{|x-y|^2}{\upbeta_1 R}}
 \\
 &\leq C \gamma^{\frac{n}{2}} 
 \left[\left(\frac{|x|}{\sqrt{\gamma R}}\wedge 1\right)\left(\frac{|y|}{\sqrt{\gamma R}}\wedge 1\right)\right]^\upgamma
 (\gamma R)^{-\frac{n}{2}} e^{-\frac{|x-y|^2}{\upbeta_2 \gamma R}}
 \\
 &\leq C^2\gamma^{\frac{n}{2}} p^H_{\gamma R}(x, y).
 \end{align*}
 Hence, for all $f\geq 0$, we get
 \begin{equation}\label{E8.4}
 \sup_{0 < t < R} \int_{\mathbb{R}^n} p^H_t(x, y)\chi^c_R(x,y) f(y) \, dy 
 \leq C^2\gamma^{\frac{n}{2}} \int_{\mathbb{R}^n} p^H_{\gamma R}(x, y) f(y) \, dy.
\end{equation}
Since
$$\frac{|y|}{\sqrt{R}}\wedge 1\geq \frac{1}{\sqrt{R\vee 1}}\left(|y|\wedge 1\right),$$
for  $t < R $, we have  
\begin{align}\label{E8.5}
p_t^H(x, y) & \leq C t^{-\frac{n}{2}} \left(\frac{|x|}{\sqrt{t}} \wedge 1\right)^\gamma \left(\frac{|y|}{\sqrt{t}} \wedge 1\right)^\upgamma e^{-\frac{|x-y|^2}{\upbeta_1 t}}\nonumber
 \\
& \leq C t^{-\frac{n}{2}} \left(\frac{|x|}{\sqrt{R}} \wedge 1\right)^\upgamma \left(\frac{|y|}{\sqrt{R}} \wedge 1\right)^\upgamma e^{-\frac{|x-y|^2}{\upbeta_1 t}}
\nonumber
 \\
& \leq C (R\vee 1)^{-\frac{\upgamma}{2}}\left(\frac{|x|}{\sqrt{R}} \wedge 1\right)^\upgamma (|y| \wedge 1)^\upgamma t^{-\frac{n}{2}} e^{-\frac{|x-y|^2}{\upbeta_1 t}}.
\end{align}
Now we can proceed as Lemma~\ref{L2.3}. For $t\in (0, R)$, choose $j\in\mathbb{N}\cup\{0\}$ so that $2^{j} t< R\leq 2^{j+1} t$. 
Define $r_j=\Gamma(t2^j),$ where $\Gamma(r):=\sqrt{\alpha r}$. Then, using \eqref{E8.5}
\begin{align}%\label{E2.4}
&p_t^H(x, y)\chi_R(x, y) \nonumber
\\
&\leq p^H_t(x, y) \chi_{\{|x-y|\leq r_{0}\}} + \sum_{k\leq j+1} p^H_t(x, y) \chi_{\{r_{k-1}<|x-y|\leq r_{k}\}}\nonumber
\\
&\leq C (R\vee 1)^{-\frac{\upgamma}{2}}\left(\frac{|x|}{\sqrt{R}} \wedge 1\right)^\upgamma\Bigl[
\alpha^{\frac{n}{2}} \frac{1}{r^n_0} (|y| \wedge 1)^\upgamma \chi_{\{|x-y|\leq r_{0}\}}
+ \sum_{k=1}^{j+1} (\alpha 2^k)^{\frac{n}{2}} e^{-\frac{\alpha 2^k}{2\upbeta_1}} (|y| \wedge 1)^\upgamma \frac{1}{r^n_k}\chi_{\{|x-y|\leq r_{k}\}}
\Bigr].
\end{align}
Thus, for $f\geq 0$,
$$\sup_{0<t<R} \int_{\Rn} p^H_t(x, y) \chi_R(x, y) f(y)\, dy
\leq \xi_1(x, R) \mathcal{M}^H_{\Gamma(R)} f(x),$$
where
$$\xi_1(x, R) = C (R\vee 1)^{-\frac{\upgamma}{2}}\left(\frac{|x|}{\sqrt{R}} \wedge 1\right)^\upgamma \left(\alpha^{\frac{n}{2}}
+ \sum_{k\geq 1} (\alpha 2^k)^{\frac{n}{2}} e^{-\frac{\alpha 2^k}{2\upbeta_1}}\right).$$
Using the  above estimate together with \eqref{E8.4}, we obtain the conclusion of the lemma.

\noindent{\bf  Case 2.} Let $\upgamma> 0$. Then for $t\leq R$, we get from \eqref{H-bound} that
\[
p_t^H(x, y) \chi_R^c(x, y) \leq C \left(\frac{|y|}{\sqrt{t}} \wedge 1\right)^\upgamma t^{-\frac{n}{2}} e^{-\frac{|x-y|^2}{\upbeta_1 t}} \chi_R^c(x, y).
\]
We set $\alpha$ large enough so that 
$$t\mapsto t^{-\frac{n+\upgamma}{2}} e^{-\frac{r^2}{\upbeta_1 t}}\quad \text{and}
\quad t\mapsto t^{-\frac{n}{2}} e^{-\frac{r^2}{\upbeta_1 t}}$$
are increasing in $(0, R]$
for all $r\geq\sqrt{\alpha R}$. Also, let $\gamma=\upbeta_1/\upbeta_2$.
 Let $y\in \{|x-y|>\sqrt{\alpha R}\}\cap\{|y|\leq \sqrt{\gamma R}\}$. Then
\begin{align*}
p_t^H(x, y) \chi_R^c(x, y) & \leq C |y|^\upgamma t^{-\frac{\upgamma}{2} - \frac{n}{2}} e^{-\frac{|x-y|^2}{\upbeta_1 t}} 
\\
& \leq C |y|^\upgamma R^{-\frac{\upgamma}{2} - \frac{n}{2}} e^{-\frac{|x-y|^2}{\upbeta_1 R}} 
\\
& \leq C \gamma^{\frac{\upgamma}{2}}\left(\frac{|y|}{\sqrt{\gamma R}} \wedge 1\right)^\upgamma R^{-\frac{n}{2}} e^{-\frac{|x-y|^2}{\upbeta_1 R}} 
\\
& \leq C^2 \gamma^{\frac{n+\upgamma}{2}} \left(\frac{|x|}{\sqrt {\gamma R}}\wedge 1\right)^{-\upgamma} p_{\gamma R}^H(x, y).
\end{align*}
On the other hand, for $y\in \{|x-y|>\sqrt{\alpha R}\}\cap\{|y|> \sqrt{\gamma R}\}$, we estimate
\begin{align*}
p_t^H(x, y) \chi_R^c(x, y) & \leq C R^{-\frac{n}{2}} e^{-\frac{|x-y|^2}{\upbeta_1 R}} 
\\
& \leq C \left(\frac{|y|}{\sqrt{\gamma R}} \wedge 1\right)^\upgamma R^{-\frac{n}{2}} e^{-\frac{|x-y|^2}{\upbeta_1 R}}
\\
&\leq C^2\gamma^{\frac{n}{2}} \left(\frac{|x|}{\sqrt{\gamma R}}\wedge 1\right)^{-\upgamma} p_{\gamma R}^H(x, y).
\end{align*}
Letting $\xi_2(x, R)= C^2 \gamma^{\frac{n+\upgamma}{2}} \left(\frac{|x|}{\sqrt{\gamma R}}\wedge 1\right)^{-\upgamma}$ we conclude that 
\begin{equation}\label{E8.7}
\sup_{0 < t < R} \int_{\mathbb{R}^n} p_t^H(x, y) \chi_R^c(x, y) f(y) \, dy
 \leq \xi_2(x, R) \int_{\mathbb{R}^n} p_{\gamma R}^H(x, y) f(y) \, dy
\end{equation}
for all $f\geq 0$. Again, by \eqref{H-bound}
\[
p_t^H(x, y) \chi_R(x, y) \leq C \left(\frac{|y|}{\sqrt{t}} \wedge 1\right)^\gamma t^{-\frac{n}{2}} e^{-\frac{|x-y|^2}{\upbeta_1 t}} \chi_R(x, y).
\]
First consider $y\in \{|x-y|\leq\sqrt{\alpha R}\}\cap\{|y|\geq \frac{|x|}{2}\wedge 1\}$.
Then  
\[
\left(\frac{|y|}{\sqrt{t}} \wedge 1\right) \leq \left(\frac{|x|}{2}\wedge 1\right)^{-1} (|y| \wedge 1)
\]
as the right-hand side is larger than $1$.
Thus,  for the above choice of $y$, 
\[
p_t^H(x, y) \chi_R(x, y) \leq \left(\frac{|x|}{2}\wedge 1\right)^{-\upgamma} (|y| \wedge 1)^\upgamma t^{-\frac{n}{2}} e^{-\frac{|x-y|^2}{\upbeta_1 t}} \chi_R(x, y).
\]
Now consider $y\in \{|x-y|\leq\sqrt{\alpha R}\}\cap\{|y|\leq \frac{|x|}{2}\wedge 1\}$. 
Then $|x-y| \geq \frac{|x|}{2} $. Using this, we get  
\[
p_t^H(x, y) \chi_R(x, y) \leq (|y|\wedge 1)^\upgamma t^{-\frac{\upgamma}{2}{-\frac{n}{2}}} e^{-\frac{|x|^2}{\upbeta_1 t}} \chi_R(x, y).
\]
Defining  $ \tilde\xi(x, R) = (\alpha R)^{n/2} \sup_{t\leq R} t^{-\frac{\gamma}{2}{-\frac{n}{2}}} e^{-\frac{|x|^2}{\upbeta_1 t}}$, we find  
\[
p_t^H(x, y) \chi_R(x, y) \leq \tilde\xi(x, R) \frac{1}{(\alpha R)^{n/2}} (|y| \wedge 1)^\gamma \chi_R(x, y).
\]
Combining the above two cases, we have
$$p_t^H(x, y) \chi_R(x, y) \leq \left(\frac{|x|}{2}\wedge 1\right)^{-\upgamma} (|y| \wedge 1)^\upgamma t^{-\frac{n}{2}} e^{-\frac{|x-y|^2}{\upbeta_1 t}} \chi_R(x, y)
+
\tilde\xi(x, R) \frac{1}{(\alpha R)^{n/2}} (|y| \wedge 1)^\gamma \chi_R(x, y).
 $$
 At this state, we define 
 $$\xi_1(x, R)= \left(\frac{|x|}{2}\wedge 1\right)^{-\upgamma}  \left(\alpha^{\frac{n}{2}}
+ \sum_{k\geq 1} (\alpha 2^k)^{\frac{n}{2}} e^{-\frac{\alpha 2^k}{2\upbeta_1}}\right) + \tilde\xi(x, R)$$
and repeating the calculation of Case 1, we arrive at 
\[
\sup_{0 < t < R}  \int_{\mathbb{R}^n} p_t^H(x, y) \chi_R(x, y) f(y) \, dy  \leq \xi_1(x, R) \mathcal{M}^H_{\Gamma(R)} f(x),
\]
for all $f\geq 0$. The above estimate together with \eqref{E8.7} complete the proof.
\end{proof}

We also need the following lemma to prove Theorem~\ref{T8.1} (compare it with \cite[Lemma~3.4]{vivianiPAMS}).
\begin{lemma}\label{L8.4}
Let $1\leq  p<\infty$ and $v\in D^H_p$. For every $R>0$,  there exists a positive weight $u$ so that $\mathcal{M}^H_R$
maps $L^p_v(\Omega)$ into $L^p_u(\Omega)$ for $p>1$ and $L^1_v(\Omega)$ into weak $L^1_u(\Omega)$ for $p=1$.
\end{lemma}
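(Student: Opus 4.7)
The plan is to reduce $\mathcal{M}^H_R$ to the standard local Hardy-Littlewood maximal operator $\mathcal{M}_R$ of Proposition~\ref{blmx} through a change of density, and then invoke that proposition applied to a suitably modified weight. Concretely, setting
$$g(y):= (|y|\wedge 1)^\upgamma f(y), \qquad \tilde v(y):= (|y|\wedge 1)^{-p\upgamma} v(y),$$
one has the pointwise identity $\mathcal{M}^H_R f(x) = \mathcal{M}_R g(x)$ for every $x\in\Omega$, together with the isometry $\norm{f}_{L^p_v(\Omega)} = \norm{g}_{L^p_{\tilde v}(\Omega)}$. Hence it suffices to establish $\tilde v\in D_p^{\mathcal{M}}$, that is,
$$\tilde v^{-1/p}(y) = (|y|\wedge 1)^\upgamma v(y)^{-1/p} \in L^{p'}_{\loc}(\Omega),$$
because Proposition~\ref{blmx} will then furnish a weight $u$ with $\mathcal{M}_R:L^p_{\tilde v}(\Omega)\to L^p_u(\Omega)$ bounded (respectively $L^1_{\tilde v}\to$ weak $L^1_u$ when $p=1$), and transferring back gives the claim for $\mathcal{M}^H_R$ on $L^p_v(\Omega)$.

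The local integrability of $\tilde v^{-1/p}$ will come from the lower bound in \eqref{H-bound}. A brief case analysis (splitting according to whether $|y|$ lies above or below $\sqrt{t_0}$ and $1$) shows that the ratio $\bigl((|y|/\sqrt{t_0})\wedge 1\bigr)/(|y|\wedge 1)$ lies in the bounded interval $[\min(1,1/\sqrt{t_0}),\,\max(1,1/\sqrt{t_0})]$ for every $y$, so regardless of the sign of $\upgamma$ one obtains
$$\Bigl(\tfrac{|y|}{\sqrt{t_0}}\wedge 1\Bigr)^\upgamma \geq c(t_0, \upgamma)\,(|y|\wedge 1)^\upgamma, \qquad y\in\Omega,$$
with $c(t_0,\upgamma)>0$. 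Combined with the fact that the Gaussian factor $e^{-|x_0-y|^2/(\upbeta_2 t_0)}$ is bounded below by some $\delta_K>0$ on any compact $K\subset\Omega$, and that the constant $\bigl((|x_0|/\sqrt{t_0})\wedge 1\bigr)^\upgamma$ is strictly positive since $x_0\in\Omega$, this yields
$$p^H_{t_0}(x_0, y)\geq C(K, x_0, t_0)\,(|y|\wedge 1)^\upgamma, \qquad y\in K.$$

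Raising to the $p'$-th power, multiplying by $v^{-p'/p}$, and integrating over an arbitrary compact $K\subset\Omega$ then gives
$$\int_K (|y|\wedge 1)^{p'\upgamma}\, v(y)^{-p'/p}\,dy\lesssim \int_\Omega p^H_{t_0}(x_0, y)^{p'} v(y)^{-p'/p}\,dy < \infty,$$
where the finiteness on the right is the defining condition $v\in D^H_p$. This is exactly $\tilde v\in D_p^{\mathcal{M}}$, and one applies Proposition~\ref{blmx} to conclude. The case $p=1$ is handled analogously: the defining condition $v\in D^H_1$ amounts to the essential-supremum bound $v(y)\gtrsim p^H_{t_0}(x_0, y)$, and the same lower bound on $p^H_{t_0}$ over compacts yields $(|y|\wedge 1)^\upgamma v(y)^{-1}\in L^\infty_{\loc}(\Omega)$, which is $\tilde v\in D_1^{\mathcal{M}}$. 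I do not anticipate any serious obstacle; the main point requiring care is the comparison between $((|y|/\sqrt{t_0})\wedge 1)^\upgamma$ and $(|y|\wedge 1)^\upgamma$, which is entirely elementary once the four sub-cases are separated.
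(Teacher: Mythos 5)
Your proof is correct and follows the paper's argument exactly: the same change of density $\tilde f=(|\cdot|\wedge 1)^{\upgamma}f$, $\tilde v=(|\cdot|\wedge 1)^{-p\upgamma}v$ reducing $\mathcal{M}^H_R$ to $\mathcal{M}_R$, followed by an appeal to Proposition~\ref{blmx}; you merely spell out the local $L^{p'}$-integrability of $\tilde v^{-1/p}$ via the lower bound in \eqref{H-bound}, which the paper asserts without detail. (One small point: $D_p^{\mathcal{M}}$ requires integrability over all bounded subsets of $\Omega$, not only compact ones, but your lower bound $p^H_{t_0}(x_0,\cdot)\gtrsim(|\cdot|\wedge 1)^{\upgamma}$ holds uniformly on bounded sets since the Gaussian factor is bounded below there, so nothing is lost.)
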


\begin{proof}
Since $v\in D^H_p$, from \eqref{H-bound} it follows that $(|y|\wedge 1)^\upgamma v^{-\frac{1}{p}}\in L^{p'}_{\loc}(\Rn)$. Let
$\tilde{v}(y) =v(y) (|y|\wedge 1)^{-p\upgamma}$. Then $\tilde{v}^{-\frac{1}{p}}\in L^{p'}_{\loc}(\Rn)$. By Proposition \ref{blmx},
there exists a weight $u$ such that $\mathcal{M}_R$ maps $L^p_{\tilde v}$ into $L^p_{u}$ for $p>1$ and 
$L^1_{\tilde v}$ into weak $L^1_{u}$ for $p=1$. Let $f\in L^p_{v}(\Omega)$. Define $\tilde{f}(y)=f(y) (|y|\wedge 1)^\upgamma$.
Then
$$\int_{\Rn} |\tilde{f}(y)|^p \tilde{v}(y)\, dy= \int_{\Rn} |f(y)|^p v(y)\, dy<\infty.$$
Hence $\tilde{f}\in L^p_{\tilde{v}}$. Let $p>1$. Then, for some constant $\kappa$, we have
$$ \int_{\Rn} |\mathcal{M}^H_R f(x)|^p u(x) \, dx=
\int_{\Rn} |\mathcal{M}_R \tilde{f}(x)|^p u(x) \, dx\leq \kappa\; \int_{\Rn} |\tilde{f}(x)|^p \tilde{v}(x) \, dx
=\int_{\Rn} |f(y)|^p v(y)\, dy.$$
Thus $\mathcal{M}^H_R$ maps $L^p_v(\Omega)$ into $L^p_u(\Omega)$ for $p>1$. Similarly, we can establish the conclusion for
$p=1$. This completes the proof.
\end{proof}

Now we can complete the proof of Theorem~\ref{T8.1}.
\begin{proof}[\textbf{Proof of Theorem~\ref{T8.1}}]
Note that, for the operator \eqref{E-hardy}, Assumption~\ref{Assump-1.2} holds. More precisely, Assumption~\ref{Assump-1.2}(i) holds by
Lemma~\ref{L8.2}, \eqref{H-bound} confirms Assumption~\ref{Assump-1.2}(ii) and Assumption~\ref{Assump-1.2}(iii) holds due to
Lemma~\ref{L8.3}. Now we can complete the proof of Theorem~\ref{T8.1} along the lines of Theorem~\ref{T-1.3} with the help of
Lemma~\ref{L8.4}.
\end{proof}

%%%%%%%%%%%%%%%%%%%%%%%%%%%%%%%%%%%%%%%%%%%%%%%%%%%%%%%%%%%%%%%%%%%%%%%

\section{Abstract nonhomogeneous heat equation}\label{S-nonhom}
In this section, we again consider the operator in \eqref{AB1} but with a nonhomogeneous  term. More precisely, we consider
\begin{equation}\label{CD1}
\begin{split}
\partial_t u + \mathcal{L} u & = F(x, t) \quad \text{in}\; \cX\times \mathbb{R}_+
\\
u(x, 0) &= 0.
\end{split}
\end{equation}
We assume the setting of Section~\ref{pre}. Formally, by Duhamel's principle, the solution
of \eqref{CD1} is written as
\begin{equation}\label{E9.2}
u(x, t) = \varphi_t\odot F(x, t):= \int_0^t \int_{\cX}\varphi_{t-s}(x, y) F(y, s)\, d\mu(y)\, ds.
\end{equation}
As before, our goal is to characterize the class of weight functions for which $\varphi_t\odot F(x, t)\to 0$ almost everywhere as $t\to 0$. 
We consider weights of the form $wv,$ where $v:\cX\to (0, \infty)$ and $w:\mathbb{R}_+\to (0, \infty)$. For $1 \leq q<\infty$ and $1<p<\infty$, we
define the space
$$L^q_w((0, T), L^p_v(\cX))=\left\{F:\cX\times\mathbb{R}_+\to\mathbb{R}\;
:\; \int_0^T \norm{F(\cdot, s)}^q_{L^p_v(\cX)} w(s)\, ds<\infty \right\}.$$
The weight class $D_{q, p}$ in this case if defined as follows: 
\begin{definition}\label{wnp}
A weight $wv\in D_{q, p}$ if and only if there 
exists a time $t_0\in (0, T)$ such that
$$\int_0^{t_0}  \norm{\varphi_{t-s}(x, \cdot) v^{-\frac{1}{p}} (\cdot)}^{q'}_{L^{p'}(\cX)} w^{-\frac{q'}{q}}(s)\, ds<\infty
\quad \text{for almost every}\; x\in\cX.$$
\end{definition}
Here $\frac{1}{p}+\frac{1}{p'}=1$ and $\frac{1}{q}+\frac{1}{q'}=1$. The local maximal operator associated to \eqref{E9.2}
is defined as  
\[
\mathsf{NH}^*_R F(x) = \sup_{0 < t < R}| \varphi_t\odot F(x, t)|
= \sup_{0 < t < R} \left|\int_{0}^{t} \int_{\cX} \varphi_{t-s}(x, y) F(y, s) \, d\mu(y) \, ds \right|,\quad R>0.
\]  
We also modify Assumption~\ref{Assump-1.2} as follows.
\begin{assumption}\label{Assump-9.1}
The following hold:
\begin{itemize}
\item[(i)] For every $f\in C_c(\cX)$ we have
$$\lim_{t\to 0^+} \int_{\cX} \varphi_t (x, y) f(y)\,d\mu(y)=f(x)$$
for every $x\in\cX$.
\item[(ii)] For every compact sets $K_1\subset (0, T)$ and $B(x_0, R)$, where  $R>0$, there exists
$C=C(x, R, K_1)$ satisfying $\varphi_{t} (x, y)\geq C$ for all $t\in K_1, y\in B(x_0, R)$.

\item[(iii)] There exist $\gamma\geq1$, a function $\Gamma:(0, \infty)\to (0, \infty)$ and functions $\xi_1, \xi_2: \cX\times (0, \infty)\to [1, \infty)$,
with $\xi_i(\cdot, R), i=1,2,$ are locally bounded for each $R$, such that for any $R\in (0, T/\gamma)$ we have
$$\mathsf{NH}^*_R F(x)\leq \xi_1(x, R)\, \mathcal{M}_{R, \Gamma(R)} F(x) + \xi_2(x, R) \,\varphi_{\gamma R}\odot F(x, \gamma R) \quad \text{for}\; x\in\cX,$$
for every measurable $F\geq 0$, where $\mathcal{M}_{R_1, R_2}$ denotes the local Hardy-Littlewood maximal function defined by
$$ 
\mathcal{M}_{R_1, R_2} F(x):=\int_0^{R_1}\mathcal{M}^{\cX}_{R_2}[F(\cdot,t)](x)\,dt= \int_0^{R_1} \left[\sup_{s\in (0, R_2)}\, \frac{1}{\mu(B(x, s))} \int_{B(x, s)} |F(y, t)|\, d\mu(y)\right] dt.
$$
\end{itemize}
\end{assumption}

The main result of this section is as follows. 
\begin{theorem}\label{T-9.1}
Let  $\{\varphi_t\}$ be the heat kernel as mentioned above, $wv$ be a positive weight and $1 \leq q<\infty$, $1<p<\infty$. 
Suppose that Assumption~\ref{Assump-9.1} holds.
Then the  following statements are equivalent:
\begin{enumerate}
\setlength\itemsep{1em}
\item[\hypertarget{11}{(1)}]
 There exists $R>0$ and a weight $u$ such that the operator 
 $\mathsf{NH}^*_R$ maps \(L^q_w((0, R), L_v^p(\cX))\) into \(L_u^p(\cX)\) for \(p > 1\).
\item[\hypertarget{12}{(2)}]
 There exists \(R>0\) such that \(\varphi_R\odot F(x, R)\) is finite for almost every $x$, and the 
 limit \(\lim_{t \rightarrow 0^+} \varphi_t\odot F(x, t) = 0\)  almost everywhere, for all 
 \(F \in L^q_w((0, R), L_v^p(\cX))\).
\item[\hypertarget{13}{(3)}]
 There exists \(R>0\) such that \(\varphi_R\odot F(x, R)\) is finite for almost every $x$ and for all 
 \(F \in L^q_w((0, R), L_v^p(\cX))\).
\item[\hypertarget{14}{(4)}]
 The weight $w v\in D_{q,p}.$
\end{enumerate}
\end{theorem}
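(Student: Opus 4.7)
The plan is to mirror the strategy of Theorem \ref{T-1.3}, adapting each step to the space-time weighted Bochner space $L^q_w((0, R), L^p_v(\cX))$. The implications (1) $\Rightarrow$ (2) $\Rightarrow$ (3) go through in parallel with the earlier argument: for (1) $\Rightarrow$ (2), the density step is in fact simpler than in Theorem \ref{T-1.3}, since for $F$ continuous with support in $\cX \times (\varepsilon, R]$ one has $\varphi_t \odot F(x, t) = 0$ whenever $t < \varepsilon$, so the limit vanishes trivially on this dense subclass. The $\Phi$-functional argument of Theorem \ref{T-1.3}, together with the weak boundedness of $\mathsf{NH}^*_R$ provided by (1), then propagates pointwise vanishing to all $F$ in the space, and (2) $\Rightarrow$ (3) is immediate.

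For (3) $\Rightarrow$ (4), I fix $x$ at which $\varphi_R \odot F(x, R) < \infty$ for every $F \in L^q_w((0, R), L^p_v(\cX))$. Writing $F = G \cdot v^{-1/p} w^{-1/q}$, the functional
$$G \mapsto \int_0^R \int_\cX \varphi_{R-s}(x, y)\, v^{-1/p}(y)\, w^{-1/q}(s)\, G(y, s)\, d\mu(y)\, ds$$
is everywhere defined on $L^q((0, R), L^p(\cX))$ and positive; automatic continuity for positive linear functionals on Banach lattices then gives boundedness. By duality, the kernel $(y, s) \mapsto \varphi_{R-s}(x, y)\, v^{-1/p}(y)\, w^{-1/q}(s)$ lies in $L^{q'}((0, R), L^{p'}(\cX))$, which is precisely the $D_{q, p}$ condition of Definition \ref{wnp} with $t_0 = R$.

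For (4) $\Rightarrow$ (1), I choose $R = t_0/\gamma$ and invoke Assumption \ref{Assump-9.1}(iii), which splits $\mathsf{NH}^*_R F$ into a maximal piece and a convolution piece, to be bounded separately. For the convolution piece, Hölder in $y$ followed by Hölder in $s$ gives
$$\varphi_{\gamma R} \odot F(x, \gamma R) \leq I(x)\, \|F\|_{L^q_w(L^p_v)}, \qquad I(x)^{q'} := \int_0^{\gamma R} \|\varphi_{\gamma R - s}(x, \cdot)\, v^{-1/p}\|_{L^{p'}(\cX)}^{q'}\, w^{-q'/q}(s)\, ds,$$
and $I$ is a.e. finite by (4). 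A weight $u_2$ is chosen (exactly as in the $p > 1$ argument of Theorem \ref{T-1.3}) so that $\int_\cX I(x)^p u_2(x)\, d\mu(x) < \infty$. For the maximal piece, Minkowski's integral inequality (valid since $p \geq 1$), the bound $\mathcal{M}^{\cX}_{\Gamma(R)}\colon L^p_v(\cX) \to L^p_{u_1}(\cX)$ from Proposition \ref{blmx}, and a final Hölder in $s$ yield
$$\|\mathcal{M}_{R, \Gamma(R)} F\|_{L^p_{u_1}(\cX)} \leq \int_0^R \|\mathcal{M}^{\cX}_{\Gamma(R)}[F(\cdot, t)]\|_{L^p_{u_1}(\cX)}\, dt \leq C \int_0^R \|F(\cdot, t)\|_{L^p_v(\cX)}\, dt \leq C A_R^{1/q'}\, \|F\|_{L^q_w(L^p_v)},$$
where $A_R := \int_0^R w^{-q'/q}(s)\, ds$. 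Setting $u = \min\{\xi_1(\cdot, R)^{-p} u_1,\, \xi_2(\cdot, R)^{-p} u_2\}$ then assembles the full bound on $\mathsf{NH}^*_R$.

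The main obstacle is showing that $A_R < \infty$ for $R$ small, which is not explicit in Definition \ref{wnp}. The idea is that the $D_{q, p}$ integrand contains the factor $\|\varphi_{t_0-s}(x, \cdot)\, v^{-1/p}\|_{L^{p'}}^{q'}$, and stochastic completeness of $\{\varphi_t\}$ combined with strong continuity of the semigroup force this norm to be continuous in $s$ and strictly positive at $s = 0$. Hence it admits a positive lower bound $M > 0$ on some interval $[0, \varepsilon]$, and then
$$M^{q'} \int_0^\varepsilon w^{-q'/q}(s)\, ds \leq \int_0^\varepsilon \|\varphi_{t_0 - s}(x, \cdot)\, v^{-1/p}\|_{L^{p'}}^{q'}\, w^{-q'/q}(s)\, ds < \infty$$
extracts $A_\varepsilon < \infty$ directly from $D_{q, p}$, after which one closes the argument by taking $R \leq \varepsilon/\gamma$.
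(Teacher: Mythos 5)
Your overall architecture matches the paper's: (1)$\Rightarrow$(2) via density and the $\Phi$-functional, (2)$\Rightarrow$(3)$\Rightarrow$(4) by duality, and (4)$\Rightarrow$(1) by splitting $\mathsf{NH}^*_R$ through Assumption~\ref{Assump-9.1}(iii), bounding the convolution piece by H\"older against the $D_{q,p}$ kernel and the maximal piece by Minkowski, Proposition~\ref{blmx}, and a final H\"older in time (this last part is exactly Lemma~\ref{L9.3}). Your observation that the dense class can be taken with time-support away from $0$, so that $\varphi_t\odot F$ vanishes identically for small $t$, is a harmless simplification of the paper's tensor-product density argument, and your automatic-continuity treatment of (3)$\Rightarrow$(4) fills in a step the paper leaves as ``easy to see.''

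There is, however, one genuine gap: your derivation of $A_R=\int_0^R w^{-q'/q}(s)\,ds<\infty$. You claim that stochastic completeness plus strong continuity of the semigroup force $s\mapsto \lVert \varphi_{t_0-s}(x,\cdot)\,v^{-1/p}\rVert_{L^{p'}}$ to be continuous and strictly positive at $s=0$. Neither assertion follows from the stated hypotheses: strong continuity is an $L^2$ statement and gives no control of the kernel in the weighted $L^{p'}$ norm with weight $v^{-p'/p}$, and conservativeness $\int_\cX\varphi_t(x,y)\,d\mu(y)=1$ does not by itself prevent the quantity $\lVert\varphi_{t_0-s}(x,\cdot)v^{-1/p}\rVert_{L^{p'}}$ from degenerating as $s$ varies (the mass could sit where $v$ is large). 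The correct — and intended — ingredient is Assumption~\ref{Assump-9.1}(ii): for $s\in[0,R_\circ]$ with $R_\circ<t_0$, the times $t_0-s$ range over the compact set $[t_0-R_\circ,t_0]\subset(0,T)$, so $\varphi_{t_0-s}(x,y)\geq C>0$ uniformly for $y$ in a fixed ball $B$; hence $\lVert\varphi_{t_0-s}(x,\cdot)v^{-1/p}\rVert_{L^{p'}}\geq C\lVert v^{-1/p}\rVert_{L^{p'}(B)}>0$ uniformly in $s$, and your lower-bound extraction of $\int_0^{R_\circ}w^{-q'/q}<\infty$ (as well as $v^{-1/p}\in L^{p'}_{\loc}$, needed to invoke Proposition~\ref{blmx}) then goes through verbatim. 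With that substitution your proof is complete and coincides with the paper's.
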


Before we proceed to prove Theorem~\ref{T-9.1}, let us verify that the operators in Example~\ref{Eg-2.1}, ~\ref{Eg2.2} and ~\ref{Eg9.6} satisfy Assumption~\ref{Assump-9.1}.
It is easy to see that Assumption~\ref{Assump-9.1}(i) and (ii) are met. So we discuss Assumption~\ref{Assump-9.1}(iii). Consider Example~\ref{Eg-2.1} first.
Set $\rho=\frac{\alpha_1}{\alpha_2}$ and $\gamma=2\rho$.
For some $\alpha>0$, to be chosen later, we split
$$\varphi_{t-s}(x, y)= \varphi_{t-s}(x, y) \chi_{\{|x-y|\leq \sqrt{\alpha R}\}} + \varphi_{t-s}(x, y)  \chi_{\{|x-y|> \sqrt{\alpha R}\}}.$$
Choose $\alpha$ large enough so that 
$$t\mapsto t^{-\frac{n}{2}} e^{-\frac{r^2}{t\alpha}}$$
is increasing in $[0, \gamma R]$ for $r\geq \sqrt{\alpha R}$. Then any $s\leq t\leq R$ we have (see \eqref{E2.1})
$$\varphi_{t-s}\chi_{\{|x-y|> \sqrt{\alpha R}\}}\leq C (2R-s\rho^{-1})^{-\frac{n}{2}} e^{-\frac{|x-y|^2}{\alpha_1 (2R-s\rho^{-1})}}\chi_{\{|x-y|> \sqrt{\alpha R}\}}
\leq C^2 \rho^{n/2}  \varphi_{\gamma R-s}\chi_{\{|x-y|> \sqrt{\alpha R}\}} .$$
Thus, 
$$\sup_{t\leq R} \int_0^t \int_{\cX}\varphi_{t-s}(x, y)  \chi_{\{|x-y|> \sqrt{\alpha R}\}} F(y, s)\, d\mu(y)\, ds
\leq C^2 \rho^{n/2} \varphi_{\gamma R}\odot F (x, \gamma R).$$
The estimate on the first term can be computed as before
(i.e., splitting the domain in annuli and dominating the kernel by maximal functions). Similar computation also applies for Example~\ref{Eg2.2} and ~\ref{Eg9.6}.

To prove Theorem~\ref{T-9.1} we need the following lemma which is an extension of Proposition~\ref{blmx}.

\begin{lemma}\label{L9.3}
Let $wv\in D_{q, p}$ for some $1\leq q<\infty$ and $1<p<\infty$. Then, there exists $R_\circ\in (0, T)$ such that for any $R>0$, there exists a positive weight $u$ so that $\mathcal{M}_{R_\circ, R}$ maps
$L^q_w((0, R_\circ), L^p_v(\cX))$ into $L^p_u(\cX)$ for $p>1$.
\end{lemma}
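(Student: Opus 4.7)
The strategy is to reduce the time-integrated maximal function $\mathcal{M}_{R_\circ, R}$ to the purely spatial local maximal function $\mathcal{M}^{\cX}_R$ controlled by Proposition~\ref{blmx}. To do this I must extract from the single hypothesis $wv \in D_{q,p}$ two separate pieces of information: that $v^{-1/p} \in L^{p'}_{\loc}(\cX)$ and that $w^{-q'/q}$ is integrable near $s = 0$. Once these are in hand, Minkowski's inequality in space followed by H\"{o}lder's inequality in time closes the argument.

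First I would show $v \in D^{\mathcal{M}}_p$. By hypothesis there exist $t_0 \in (0, T)$ and some $x \in \cX$ with
\[
\int_0^{t_0} \norm{\varphi_{t_0 - s}(x, \cdot) v^{-1/p}}_{L^{p'}(\cX)}^{q'} w^{-q'/q}(s)\, ds < \infty,
\]
so there exists $s_\ast \in (0, t_0)$ with $\norm{\varphi_{t_0 - s_\ast}(x, \cdot) v^{-1/p}}_{L^{p'}(\cX)} < \infty$. Writing $\tau = t_0 - s_\ast \in (0, T)$ and using Assumption~\ref{Assump-9.1}(ii) with $K_1 = \{\tau\}$ and any ball $B(x_0, R_1)$, we get a positive lower bound $\varphi_\tau(x, y) \geq C(x, R_1, \tau) > 0$ for $y \in B(x_0, R_1)$, whence
\[
\norm{v^{-1/p}}_{L^{p'}(B(x_0, R_1))} \leq C(x, R_1, \tau)^{-1} \norm{\varphi_\tau(x, \cdot) v^{-1/p}}_{L^{p'}(\cX)} < \infty,
\]
yielding $v^{-1/p} \in L^{p'}_{\loc}(\cX)$.

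Next, set $R_\circ = t_0/2$. Fix any ball $B = B(x_0, R_1)$ with $0 < \norm{v^{-1/p}}_{L^{p'}(B)} < \infty$, which exists by the previous step and the positivity of $v$. Applying Assumption~\ref{Assump-9.1}(ii) with the compact set $K_1 = [t_0/2, t_0] \subset (0, T)$ and the ball $B$, there is $C > 0$ with $\varphi_{t_0 - s}(x, y) \geq C$ for all $s \in [0, R_\circ]$ and $y \in B$, so that
\[
C^{q'} \norm{v^{-1/p}}_{L^{p'}(B)}^{q'} \int_0^{R_\circ} w^{-q'/q}(s)\, ds \leq \int_0^{t_0} \norm{\varphi_{t_0 - s}(x, \cdot) v^{-1/p}}_{L^{p'}(\cX)}^{q'} w^{-q'/q}(s)\, ds < \infty,
\]
giving the required local integrability of $w^{-q'/q}$.

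Finally, for any $R > 0$, Proposition~\ref{blmx} (applied to $v \in D^{\mathcal{M}}_p$ with $p > 1$) produces a weight $u$ so that $\mathcal{M}^{\cX}_R : L^p_v(\cX) \to L^p_u(\cX)$ is bounded with constant $C_R$. Minkowski's inequality in $L^p_u(\cX)$ followed by H\"{o}lder in time with exponents $q, q'$ yields
\[
\norm{\mathcal{M}_{R_\circ, R} F}_{L^p_u(\cX)} \leq \int_0^{R_\circ} \norm{\mathcal{M}^{\cX}_R F(\cdot, t)}_{L^p_u(\cX)}\, dt \leq C_R \left(\int_0^{R_\circ} w^{-q'/q}(t)\, dt\right)^{1/q'} \norm{F}_{L^q_w((0, R_\circ), L^p_v(\cX))},
\]
and the right-hand side is finite by the second step. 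The essential point, and the only nontrivial one, is the simultaneous extraction of the two local integrability conditions from the joint hypothesis $wv \in D_{q,p}$ via Assumption~\ref{Assump-9.1}(ii); once accomplished, Minkowski and H\"{o}lder dispatch the rest mechanically.
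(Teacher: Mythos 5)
Your proposal is correct and follows essentially the same route as the paper: reduce $\mathcal{M}_{R_\circ,R}$ to the spatial maximal operator, invoke Proposition~\ref{blmx}, and close with Minkowski in space plus H\"{o}lder in time. The only difference is that you spell out in detail the step the paper states in one line, namely how Assumption~\ref{Assump-9.1}(ii) splits the joint condition $wv\in D_{q,p}$ into $v^{-1/p}\in L^{p'}_{\loc}(\cX)$ and $\int_0^{R_\circ}w^{-q'/q}(s)\,ds<\infty$; your elaboration of that step is sound.
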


\begin{proof}
By definition of the weight class, there exists $t_0\in (0, T)$ such that
$$\int_0^{t_0}  \norm{\varphi_{t-s}(x, \cdot) v^{-\frac{1}{p}} (\cdot)}^{q'}_{L^{p'}} w^{-\frac{q'}{q}}(s)\, ds<\infty
\quad \text{for almost every}\; x\in\cX.$$
Fix $R_\circ<t_0$. Due to Assumption~\ref{Assump-9.1}(ii) we have $v^{-\frac{1}{p}}\in L^{p'}_{\loc}(\cX)$ and 
$$\int_0^{R_\circ} (w(s))^{-\frac{q'}{q}} \, ds<\infty.$$
Let $p>1$.
Applying Proposition~\ref{blmx} we obtain a weight $u$ such that
$$\left[\int_{\cX} |\mathcal{M}^{\cX}_R f(x)|^p u(x) \, d\mu(x)\right]^{\frac{1}{p}}\leq C \norm{f}_{L^p_v(\cX)}\quad \text{for all}\; f\in L^p_v(\cX).$$
Consider $F\in L^q_w((0, R_\circ), L^p_v(\cX))$. Then $F(\cdot, s)\in L^p_v(\cX)$ for almost every $s$, giving us,
\begin{align*}
\left[\int_{\cX} |\mathcal{M}_{R_\circ,R} F(x)|^p u(x) \, d\mu(x)\right]^{\frac{1}{p}} &\leq \left[\int_{\cX} (\mathcal{M}_{R_\circ,R} |F|(x))^p u(x) \, d\mu(x)\right]^{\frac{1}{p}}
\\
&=\left[\int_{\cX} \left(\int_0^{R_\circ} \mathcal{M}^{\cX}_{R} [|F|(\cdot,s)](x) \, ds\right)^p u(x) \, d\mu(x)\right]^{\frac{1}{p}}
\\
& \leq \int_0^{R_\circ} \left[\int_{\cX}  \left(\mathcal{M}^{\cX}_{R} [|F|(\cdot,s)](x) \right)^p u(x) \, d\mu(x)\right]^{\frac{1}{p}}\, ds
\\
&\leq \int_0^{R_\circ} C \norm{F(\cdot, s)}_{L^p_v(\cX)} \, ds
\\
&\leq C \left[\int_0^{R_\circ} w^{-\frac{q'}{q}}(s)\, ds\right]^{\frac{1}{q'}}
\left[\int_0^{R_\circ} C \norm{F(\cdot, s)}^q_{L^p_v(\cX)} w(s) \, ds\right]^{1/q}.
\end{align*} 
This completes the proof.
\end{proof}

Now we can complete the proof of Theorem~\ref{T-9.1}
\begin{proof}[\textit{{\bf Proof of Theorem~\ref{T-9.1}}}]
Assume that \hyperlink{11}{(1)} holds. Since $L^q_w((0, R), L^p_v(\cX))$ is a Bochner space, functions of the form $\sum_{i=1}^k \zeta_i(t)\tilde{\zeta}_i(x)$
are dense. We can even choose $\zeta_i\in C_c((0, R))$ and $\tilde\zeta_i$ in $C_c(\cX)$. By Assumption~\ref{Assump-9.1}(i), 
$$\int_{\cX}\varphi_t(x, y) \tilde\zeta_i(y) \, d\mu(y)\to \tilde\zeta_i(x)\quad \text{for all}\; x\in\cX,$$
as $t\to 0$. Hence 
$$\varphi_t\odot (\sum_{i=1}^k \zeta_i \tilde{\zeta}_i)(x,t)\to 0\quad \text{as}\; t\to 0$$
for all $x$. Following the proof of Theorem~\ref{T-1.3} we see that \hyperlink{12}{(2)} holds. It is easy to see that
$\hyperlink{12}{(2)}\Rightarrow \hyperlink{13}{(3)}$ and $\hyperlink{13}{(3)}\Rightarrow \hyperlink{14}{(4)}$.
Now assume \hyperlink{14}{(4)} holds. From the definition of $D_{q, p}$ and duality, there exists $t_0\in (0, T)$ such that
$\varphi_t\odot F(\cdot, t_0)$ maps $L^q_w((0, t_0), L^p_v(\cX))$ into $L^p_{u_2}(\cX)$ for some weight function $u_2$. Setting
$R=t_0/\gamma$, we obtain from Lemma~\ref{L9.3} that $\mathcal{M}_{R, \Gamma(R)}$ maps $L^q_w((0, t_0), L^p_v(\cX))$
into $L^p_{u_1}(\cX)$ for some weight function $u_1$. Letting $u=\min\{(\xi_1(\cdot, R_1))^{-p}u_1, 
(\xi_2(\cdot, R_1))^{-p}u_2\},$ we complete the proof.
\end{proof}

\subsection{An application to  the classical  nonlinear  heat equation}
In this section, we showcase an interesting application of Theorems~\ref{T-1.3} and ~\ref{T-9.1} in the context of classical heat equation.
More specifically, consider the following initial value nonhomogeneous problem  
\begin{equation}\label{eq:nhhom}
\begin{split}
    u_{t}(x, t) - \Delta u(x, t) &= F(x, t) \quad \text{in}\; \Rn\times (0, \infty),
    \\
    u(x,0) &= g(x) \quad \text{in}\; \Rn.
\end{split}
\end{equation}  
Using Duhamel principle, the solution of \eqref{eq:nhhom} is written as 
\[
u(x, t) = \int_{\mathbb{R}^n} \hat{p}_{t}(x-y) g(y) \, dy + \int_{0}^{t} \int_{\mathbb{R}^n} \hat{p}_{t-s}(x-y) F(y, s) \, dy \, ds,
\] 
where $\hat{p}_t(x)= (4\pi t)^{-\frac{n}{2}}e^{-\frac{|x|^2}{4t}}.$ If for some weights $\tilde v\in D_{p_1}$, and $w v\in D_{q, p_2}, \,p_2\in (1, \infty),$ we have \(g \in L_{\tilde v}^{p_1}(\mathbb{R}^n)\) and
\(F \in L_{w}^{q}((0, T), L_{v}^{p_2}(\mathbb{R}^n))\) then 
$\lim_{t \to 0} u(x, t) = g(x)$ almost surely. We can even consider nonlinear nonhomogeneous term and find a suitable 
space where such pointwise attainability of the initial data holds. Towards this goal, we
consider the following nonlinear partial differential equation
\begin{equation}\label{E-nonheat}
\begin{split}
\partial_t u - \Delta u &= |u|^{\alpha - 1} u \quad \text{in}\; \Rn\times (0, \infty),
\\
u(0, x) & = U(x)\quad  x \in \mathbb{R}^n
\end{split}
\end{equation}
for $\alpha> 1$. By a solution of \eqref{E-nonheat} we mean a function $u:\Rn\times(0, \infty)\to\mathbb{R}$ that satisfies
$$u(x, t) = \hat{p}_t \ast U(x) + \int_0^t \int_{\mathbb{R}^n} \hat{p}_{t-s}(x-y) N(u(y, s)) \, dy \, ds,$$
where \( N(u) := |u|^{\alpha - 1} u \). Let us set the weight functions \( v(x) = 1 \), \( w(t) = t^k \) with \( k > 0 \), and the space \( X = L_w^q\left((0, T), L^p(\mathbb{R}^n)\right) \).
Define the set 
\[
M_A = \left\{ u \in X \mid \|u\|_X \leq A \right\}.
\]
Also, define the map
\[
\mathcal{T}(u) = \hat{p}_t \ast U + \int_0^t \int_{\mathbb{R}^n} \hat{p}_{t-s}(x-y) N(u(y, s)) \, dy \, ds.
\]
Note that a fixed point of $\mathcal{T}$ is a solution to \eqref{E-nonheat}. We denote by
$$h(x, t)=\hat{p}_t * U(x).$$
\begin{theorem}\label{T9.4}
Suppose that $h\in L_w^q\left((0, T_0), L^p(\mathbb{R}^n)\right)$ for some $T_0>0$. Also, let the following
hold:
\begin{equation}\label{E9.5}
\begin{split}
\frac{k\alpha}{\alpha-1}<1 , \beta:=\frac{n}{2} \left(\frac{\alpha-1}{p}\right) \left(\frac{q}{q-\alpha}\right) &<1,
\; \min\{p, q\}>\alpha,
\\
k + (q-\alpha) \left(1 - \beta - \frac{k\alpha}{q-\alpha}\right) &> -1.
\end{split}
\end{equation}
Then there exists $T>0$ such that $\mathcal{T}$ has fixed point in $X$. Furthermore, given $A>0$, 
there exists $T>0$ such that any  two solutions in $M_A$ would agree in $[0, T]$.
\end{theorem}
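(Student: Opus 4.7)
My plan is to prove Theorem~\ref{T9.4} by a Banach fixed point argument applied to the map $\mathcal{T}(u) = h + \mathcal{N}(u)$ on the closed ball $M_A \subset X = L^q_w((0, T), L^p(\mathbb{R}^n))$, where $\mathcal{N}(u)(x,t) := \int_0^t \hat{p}_{t-s} \ast N(u(\cdot, s))(x)\, ds$. The whole argument reduces to a self-map bound $\|\mathcal{N}(u)\|_X \leq C\, T^\delta \|u\|_X^\alpha$ together with a Lipschitz bound $\|\mathcal{N}(u) - \mathcal{N}(v)\|_X \leq C\, T^\delta (\|u\|_X^{\alpha-1} + \|v\|_X^{\alpha-1})\|u-v\|_X$, for some $\delta > 0$ depending only on $n, p, q, \alpha, k$. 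Granting these, I take $A := 2\|h\|_X$ and then choose $T$ small enough that $2CT^\delta A^{\alpha-1} < 1$, so that $\mathcal{T}: M_A \to M_A$ is a strict contraction; Banach's theorem then delivers the fixed point, and the Lipschitz bound yields the stated uniqueness on $M_A$ on $[0,T]$.

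For the nonlinear self-map estimate, the starting point is the Young-type bound
\[
\|\hat{p}_\tau \ast F\|_{L^p(\mathbb{R}^n)} \leq C\, \tau^{-\sigma}\,\|F\|_{L^{p/\alpha}(\mathbb{R}^n)},\qquad \sigma := \frac{n(\alpha-1)}{2p},
\]
together with $\|N(u(\cdot,s))\|_{L^{p/\alpha}} = \|u(\cdot,s)\|_{L^p}^\alpha$, which is legitimate since $p \geq \alpha$ (from $\min\{p,q\}>\alpha$). This gives, pointwise in $t$,
\[
\|\mathcal{N}(u)(\cdot,t)\|_{L^p} \leq C\int_0^t (t-s)^{-\sigma}\,\|u(\cdot,s)\|_{L^p}^\alpha\, ds.
\]
Setting $g(s) := s^{k/q}\|u(\cdot,s)\|_{L^p}$, so that $\|g\|_{L^q(0,T)} = \|u\|_X$, and applying H\"older in $s$ with conjugate exponents $(q/\alpha,\, q/(q-\alpha))$ (valid since $q > \alpha$), one bounds the right-hand side by $C\|u\|_X^\alpha$ times a Beta-type factor comparable to $t^{(q-\alpha)(1-\beta)/q - k\alpha/q}$. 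Raising to the $q$-th power, multiplying by $t^k$, and integrating on $(0,T)$ produces the factor $T^\delta$ with $\delta = (1 + k(1-\alpha) + (q-\alpha)(1-\beta))/q$, which is strictly positive exactly by the fourth condition of \eqref{E9.5}; the remaining hypotheses play complementary roles, namely $\beta < 1$ and $\frac{k\alpha}{\alpha-1} < 1$ (together with $q > \alpha$) guarantee that both exponents of the inner time-Beta integral stay below $1$, so that the Beta integral converges.

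The Lipschitz estimate follows the same template after the elementary pointwise bound $\bigl||a|^{\alpha-1}a - |b|^{\alpha-1}b\bigr| \leq C(|a|^{\alpha-1} + |b|^{\alpha-1})|a-b|$ combined with H\"older in $x$ using exponents $p/(\alpha-1)$ and $p$, which yields $\|N(u) - N(v)\|_{L^{p/\alpha}} \leq C(\|u\|_{L^p}^{\alpha-1} + \|v\|_{L^p}^{\alpha-1})\|u-v\|_{L^p}$; the subsequent $s$-integration is identical to the previous step. The main obstacle, as is already visible above, is not of analytic depth but of \emph{exponent bookkeeping}: one has to check carefully that each of the four conditions in \eqref{E9.5} is used in exactly one place---admissibility of $L^{p/\alpha}$, validity of H\"older in $s$, convergence of the spatial--temporal Beta integral, and positivity of the smallness exponent $\delta$---and that these roles are compatible. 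No Carleson-type or delicate oscillatory estimate is required; the Gaussian decay of $\hat p_\tau$ plus H\"older does the entire job.
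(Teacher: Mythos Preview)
Your proposal is correct and follows essentially the same route as the paper: Young's inequality to pass from $L^{p/\alpha}$ to $L^p$ with the smoothing factor $(t-s)^{-\sigma}$, then H\"older in $s$ with exponents $(q/\alpha,\,q/(q-\alpha))$ to extract $\|u\|_X^\alpha$ and reduce to the Beta integral $\int_0^1 (1-l)^{-\beta} l^{-k\alpha/(q-\alpha)}\,dl$, followed by integration in $t$ to produce the smallness factor $T^\delta$. The only cosmetic difference is in the Lipschitz step: the paper carries out an iterated two-way H\"older (first pulling out $\|u_1-u_2\|_X$, then separating the $g(s)^{\alpha-1}$ factor), whereas your sketch implicitly uses a single three-way H\"older with exponents $(q/(q-\alpha),\,q/(\alpha-1),\,q)$, which lands on exactly the same Beta integral in one move and is arguably cleaner.
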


\begin{proof}
The proof technique uses standard contraction principle. We fix 
$$A> B:=\left[\int_0^{T_0} \norm{h(\cdot, t)}^q_{L_p(\Rn)} w(t) dt\right]^{\frac{1}{q}}.$$
We find a $T>0$ such that the operator \( \mathcal{T} \) maps \( M_A \) into itself, that is, \( T: M_A \to M_A \), and \( T \) is a contraction mapping.
 Denote 
\[
Gu(x, t) = \int_0^t \int_{\mathbb{R}^n} \hat{p}_{t-s}(x-y) N(u(y, s)) \, dy \, ds.
\]
To estimate \( \|Gu(\cdot, t)\|_{L^p(\mathbb{R}^n)}  \), we use Young's convolution inequality along with the conditions \( \alpha > 1 \), \( \frac{p}{\alpha} > 1 \), \( r = \frac{p}{p + 1 - \alpha} \), and \( q > \alpha \). This gives
\[
\begin{aligned}
\|Gu(\cdot, t)\|_{L^p(\mathbb{R}^n)} 
&\leq \left[\int_{\mathbb{R}^n} \left( \int_0^t \int_{\mathbb{R}^n} \hat{p}_{t-s}(x-y) |u(y, s)|^\alpha \, dy \, ds \right)^p dx \right]^{1/p} \\
&\leq \int_0^t \left[\int_{\mathbb{R}^n} \left(\hat{p}_{t-s} * |u|^\alpha(\cdot, s)\right)^p(x) \, dx \right]^{1/p} \, ds \\
&\leq \int_0^t \|\hat{p}_{t-s}\|_{L^r(\Rn)} \|u(\cdot, s)\|_{L^p(\Rn)}^\alpha \, ds \\
&= \int_0^t \|\hat{p}_{t-s}\|_{L^r(\Rn)} \|u(\cdot, s)\|_{L^p(\Rn)}^\alpha s^{\frac{k\alpha}{q}} s^{-\frac{k\alpha}{q}} \, ds \\
&\leq \left(\int_0^t \|u(\cdot, s)\|_{L^p(\Rn)}^q s^k \, ds \right)^{\alpha/q} \left(\int_0^t \|\hat{p}_{t-s}\|_{L^r(\Rn)}^{q/(q-\alpha)} s^{-\frac{k\alpha}{q-\alpha}}\, ds \right)^{(q-\alpha)/q} \\
&\leq \|u\|_X^\alpha \left(\int_0^t \|\hat{p}_{t-s}\|_{L^r(\Rn)}^{q/(q-\alpha)} s^{-\frac{k\alpha}{q-\alpha}}\, ds \right)^{(q-\alpha)/q}.
\end{aligned}
\]
To estimate the last integration we recall  that
\[
\|\hat{p}_{t-s}\|_{L^r(\Rn)} \leq (4\pi)^{\frac{r-1}{r}}\, (t-s)^{-\frac{n}{2}\left(\frac{r-1}{r}\right)}.
\]
Thus,  
\[
\|\hat{p}_{t-s}\|_{L^r(\Rn)}^{q/(q-\alpha)} \leq (4\pi)^{\frac{(r-1)q}{r(q-\alpha)}}\, (t-s)^{-\frac{n}{2}\left(\frac{r-1}{r}\right)\left(\frac{q}{q-\alpha}\right)}.
\]
Let us denote by
\[
\beta = \frac{n}{2} \left(\frac{r-1}{r}\right) \left(\frac{q}{q-\alpha}\right).
\]  
We then estimate
\[
\int_0^t \|\hat{p}_{t-s}\|_{L^r(\Rn)}^{q/(q-\alpha)} s^{-\frac{k\alpha}{q-\alpha}} ds 
\leq  (4\pi)^{\frac{(r-1)q}{r(q-\alpha)}}\, \int_0^t (t-s)^{-\beta} s^{-\frac{k\alpha}{q-\alpha}} ds.
\]
By substituting \( s = tl \), we rewrite the integral as
\[
\int_0^t (t-s)^{-\beta} s^{-\frac{k\alpha}{q-\alpha}} ds 
= t^{1 - \beta - \frac{k\alpha}{q-\alpha}} \int_0^1 (1-l)^{-\beta} l^{-\frac{k\alpha}{q-\alpha}} dl.
\]
By our assumption we have \( \beta < 1 \) and \( \frac{k\alpha}{q-\alpha} < 1 \) which make the above integral finite. Consequently,  
\[
\left(\int_0^t \|\hat{p}_{t-s}\|_{L^r(\Rn)}^{q/(q-\alpha)} s^{-\frac{k\alpha}{q-\alpha}}\, ds\right)^{(q-\alpha)/q} 
\leq \kappa_1 t^{\frac{q-\alpha}{q} \left(1 - \beta - \frac{k\alpha}{q-\alpha}\right)},
\]
for some universal constant $\kappa_1$.
Thus, the estimate for \( \|Gu\|_X \) becomes  
\[
\left(\int_0^T \|Gu\|^q\right)^{1/q} \leq \kappa_1 \|u\|_X^\alpha \left( \int_0^T t^{k + (q-\alpha) \left(1 - \beta - \frac{k\alpha}{q-\alpha}\right)} dt \right)^{1/q}.
\]
Recall that 
\[
k + (q-\alpha) \left(1 - \beta - \frac{k\alpha}{q-\alpha}\right) > -1.
\]
So, if we choose $T\leq T_0$ small enough so that
$$B + \kappa_1 A^\alpha \left( \int_0^T t^{k + (q-\alpha) \left(1 - \beta - \frac{k\alpha}{q-\alpha}\right)} dt \right)^{1/q}<A,$$
we get 
\[
\|\mathcal{T}(u)\|_X \leq A.
\]
Thus, $\mathcal{T}:M_A\mapsto M_A$. Next, we show the contraction property of $\mathcal{T}$ in $M_A$.
 Applying Young's convolution inequality, we note that
\[
\begin{aligned}
\|\mathcal{T}(u_1)(\cdot, t) - \mathcal{T}(u_2)(\cdot, t)\|_{L^p(\mathbb{R}^n)} 
&= \|Gu_1(\cdot, t) - Gu_2(\cdot, t)\|_{L^p(\mathbb{R}^n)} \\
&= \left[ \int_{\mathbb{R}^n} \left( \int_0^t \int_{\mathbb{R}^n} \hat{p}_{t-s}(x-y) \left|u_1(y, s)^\alpha - u_2(y, s)^\alpha \right| \, dy \, ds \right)^p dx \right]^{1/p} \\
&\leq \int_0^t \|\hat{p}_{t-s}\|_{L^r(\mathbb{R}^n)} \|u_1^\alpha(\cdot, s) - u_2^\alpha(\cdot, s)\|_{L^{p/\alpha}(\mathbb{R}^n)} \, ds.
\end{aligned}
\]
Next, employing the inequality
\[
\left| |u_1|^{\alpha-1} u_1 - |u_2|^{\alpha-1} u_2 \right| \leq \alpha \left( |u_1|^{\alpha-1} + |u_2|^{\alpha-1} \right) |u_1 - u_2|,
\]
together with the H\"{o}lder's inequality, we obtain
\[
\left\||u_1|^{\alpha-1} u_1 - |u_2|^{\alpha-1} u_2 \right\|_{L^{p/\alpha}(\mathbb{R}^n)} 
\leq \alpha g \|u_1 - u_2\|_{L^p(\mathbb{R}^n)},
\]
where $g(s)= \|u_1(.,s)\|_{L^p(\mathbb{R}^n)}^{\alpha-1} + \|u_2(.,s)\|_{L^p(\mathbb{R}^n)}^{\alpha-1}.$ Substituting this bound into the earlier estimate, it follows that
\[
\begin{aligned}
&\|\mathcal{T}(u_1)(\cdot, t) - \mathcal{T}(u_2)(\cdot, t)\|_{L^p(\mathbb{R}^n)}
\\
&\leq \alpha \int_0^t \|\hat{p}_{t-s}\|_{L^r(\mathbb{R}^n)} \, g(s) \|u_1(\cdot, s) - u_2(\cdot, s)\|_{L^p(\mathbb{R}^n)} \, ds\\
&\leq \alpha \int_0^t \|\hat{p}_{t-s}\|_{L^r(\mathbb{R}^n)} \, g(s) \|u_1(\cdot, s) - u_2(\cdot, s)\|_{L^p(\mathbb{R}^n)} s^{\frac{k}{q}}s^{-\frac{k}{q}}\, ds\\
&\leq \alpha\|u_1-u_2\|_X\left(\int_0^t  \|\hat{p}_{t-s}\|_{L^r(\mathbb{R}^n)}^\frac{q}{(q-1)} \left( g(s) \right)^\frac{q}{(q-1)}  s^{-\frac{k}{(q-1)}}  \,ds       \right)^\frac{(q-1)}{q}\\
&= \alpha\|u_1-u_2\|_X\left(\int_0^t  \|\hat{p}_{t-s}\|_{L^r(\mathbb{R}^n)}^\frac{q}{(q-1)} \left(\, g(s)s^\frac{k(\alpha-1)}{q}\right)^\frac{q}{(q-1)}  s^{-\frac{k\alpha}{(q-1)}}  \,ds       \right)^\frac{(q-1)}{q}\\
&\leq \alpha\|u_1-u_2\|_X\left(\int_0^t  \left(\, g(s)s^\frac{k(\alpha-1)}{q}\right)^\frac{q}{(\alpha-1)}  \,ds       \right)^\frac{(\alpha-1)}{q} \left(\int_0^t \|\hat{p}_{t-s}\|_{L^r(\mathbb{R}^n)}^\frac{q}{(q-\alpha)}  s^{-\frac{k\alpha}{(q-\alpha)}} \,ds       \right)^\frac{(q-\alpha)}{q}\\
%&= \alpha\|u_1-u_2\|_X\left(\int_0^t  \left(\|u_1(.,s)\|_{L^p(\mathbb{R}^n)}^{\alpha-1} + \|u_2(.,s)\|_{L^p(\mathbb{R}^n)}^{\alpha-1}\right)^\frac{q}{(\alpha-1)} s^k \,ds       \right)^\frac{(\alpha-1)}{q} \left(\int_0^t \|\hat{p}_{t-s}\|_{L^r(\mathbb{R}^n)}^\frac{q}{(q-\alpha)}  s^{-\frac{k\alpha}{(q-\alpha)}} \,ds       \right)^\frac{(q-\alpha)}{q}\\
&\leq \alpha\|u_1-u_2\|_X\left(\|u_1\|_X+\|u_2\|_X\right)^{(\alpha-1)}  \left(\int_0^t \|\hat{p}_{t-s}\|_{L^r(\mathbb{R}^n)}^\frac{q}{(q-\alpha)}  s^{-\frac{k\alpha}{(q-\alpha)}} \,ds       \right)^\frac{(q-\alpha)}{q}
\\
&\leq \alpha\|u_1-u_2\|_X (2A)^{(\alpha-1)}  \left(\int_0^t \|\hat{p}_{t-s}\|_{L^r(\mathbb{R}^n)}^\frac{q}{(q-\alpha)}  s^{-\frac{k\alpha}{(q-\alpha)}} \,ds       \right)^\frac{(q-\alpha)}{q}.
\end{aligned}
\]
Now the last integral can be computed as before, and choosing $T$ small, if required,  we can ensure that 
\( \|\mathcal{T}(u_1)-\mathcal{T}(u_2)\|_X < \varrho \|u_1-u_2\|_X\), for some $\varrho\in (0,1)$. This gives the existence of a unique
fixed point in $M_A$. The second part follows by observing that given any $A>0$, there exists $T>0$  such that
$\mathcal{T}$ becomes a contraction in $M_A$, leading to a unique fixed point.
\end{proof}

Combining Theorems~\ref{T-9.1} and ~\ref{T9.4} we have the following convergence result for the power type nonlinearity.
\begin{theorem}
Suppose that \eqref{E9.5} holds together with $k<q-1$. Also, let $\hat{p}\ast U\in X$ and $U\in L^p_{\tilde v}(\mathbb{R}^n)$ for some weight function
$\tilde{v}\in D_p$.
Then the solution of \eqref{E-nonheat}
satisfies $\lim_{t\to 0} u(x, t)=U(x)$ almost surely.
\end{theorem}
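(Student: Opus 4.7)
My plan is to split the mild solution
\[
u(x,t) = \hat{p}_t \ast U(x) + \int_0^t \int_{\mathbb{R}^n} \hat{p}_{t-s}(x-y)\, N(u(y,s))\, dy\, ds =: h(x,t) + Gu(x,t)
\]
and handle the two summands independently: Theorem~\ref{T-1.3} dispatches $h$, while Theorem~\ref{T-9.1} is aimed at $Gu$. Theorem~\ref{T9.4} already supplies a fixed point $u \in X = L^q_w((0,T), L^p(\mathbb{R}^n))$ with $w(s) = s^k$, so the two pieces are well defined.

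For the linear piece $h$, the classical heat semigroup on $\mathbb{R}^n$ satisfies Assumption~\ref{Assump-1.2} (this is the $b \equiv 0$ case of Example~\ref{Eg-2.1}). Since $U \in L^p_{\tilde{v}}(\mathbb{R}^n)$ with $\tilde{v} \in D_p$ by hypothesis, Theorem~\ref{T-1.3} applies directly and yields $\lim_{t \to 0} h(x,t) = U(x)$ for almost every $x$.

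For the Duhamel piece $Gu$, set $F := N(u) = |u|^{\alpha-1} u$. Since $|F(\cdot,s)| = |u(\cdot,s)|^\alpha$, we have
\[
\int_0^T \|F(\cdot,s)\|^{q/\alpha}_{L^{p/\alpha}(\mathbb{R}^n)}\, w(s)\, ds = \int_0^T \|u(\cdot,s)\|^q_{L^p(\mathbb{R}^n)}\, w(s)\, ds = \|u\|_X^q < \infty,
\]
i.e., $F \in L^{q/\alpha}_w((0,T), L^{p/\alpha}(\mathbb{R}^n))$. Because Assumption~\ref{Assump-9.1} for the classical heat kernel has already been recorded in the discussion just after Theorem~\ref{T-9.1}, it remains only to verify the weight admissibility $w \in D_{q/\alpha,\, p/\alpha}$; once this is in hand, Theorem~\ref{T-9.1} with exponent pair $(q_*, p_*) = (q/\alpha, p/\alpha)$ and weight pair $(w_*, v_*) = (w, 1)$ yields $\lim_{t \to 0} Gu(x,t) = 0$ a.e.

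The main obstacle is precisely this admissibility check. Computing $\|\hat{p}_{t_0 - s}\|_{L^{p/(p-\alpha)}} = c(t_0-s)^{-n\alpha/(2p)}$ and using $(q/\alpha)' = q/(q-\alpha)$ together with $(q/\alpha)'/(q/\alpha) = \alpha/(q-\alpha)$, the defining integral for $D_{q/\alpha,\, p/\alpha}$ reduces to the Beta-type expression
\[
\int_0^{t_0} (t_0 - s)^{-\frac{\beta\alpha}{\alpha-1}}\, s^{-\frac{k\alpha}{q-\alpha}}\, ds,
\]
which is finite provided both exponents are strictly less than $1$. I expect the conditions $\beta < 1$, $k\alpha/(\alpha-1) < 1$, $k < q-1$, and the last inequality of \eqref{E9.5} to conspire, through a case analysis on the sign of $k(\alpha-1) - 1$, to guarantee both $\beta\alpha/(\alpha-1) < 1$ and $k\alpha/(q-\alpha) < 1$ (note that the latter already appears implicitly in the proof of Theorem~\ref{T9.4}). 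Once the admissibility is established, Theorem~\ref{T-9.1} gives $Gu(x,t) \to 0$ a.e., and combining this with the convergence of $h$ produces $u(x,t) = h(x,t) + Gu(x,t) \to U(x)$ a.e.\ as $t \to 0$.
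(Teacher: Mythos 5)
Your overall architecture --- split $u=h+Gu$, send $h\to U$ via Theorem~\ref{T-1.3} and $Gu\to 0$ via Theorem~\ref{T-9.1} applied to $F=N(u)$ after a weight-admissibility check --- is exactly the paper's, and the treatment of $h$ is fine. The gap is the admissibility step, which you leave as an expectation rather than a proof and which, in the form you set it up, cannot be closed from the stated hypotheses. Working with the exponent pair $(q/\alpha,p/\alpha)$, your Beta-type integral is finite iff \emph{both} $\tfrac{k\alpha}{q-\alpha}<1$ and $\tfrac{\alpha\beta}{\alpha-1}=\tfrac{n\alpha}{2p}\tfrac{q}{q-\alpha}<1$. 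The first is indeed part of the standing assumptions (it is what the proof of Theorem~\ref{T9.4} actually uses). The second is strictly stronger than $\beta<1$ (take $\alpha=2$, $\beta=0.9$: then $\tfrac{\alpha\beta}{\alpha-1}=1.8$), and the remaining inequality in \eqref{E9.5}, which rearranges to $(q-\alpha)(1-\beta)>k(\alpha-1)-1$, imposes no further upper bound on $\beta$ once $k(\alpha-1)\le 1$; one can choose admissible $n,p,q,k$ realizing $\beta=0.9$ with $\alpha=2$. So "the conditions conspire" is not only unproved but false in general for your choice of exponents.

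A telling symptom is that the hypothesis $k<q-1$ --- the one assumption added here beyond \eqref{E9.5} --- never enters your argument. In the paper it is used exactly where your scheme diverges: the membership $w\in D_{q,p}$ is checked with the \emph{original} exponents $(q,p)$ and $v\equiv 1$, so that $w^{-q'/q}(s)=s^{-k/(q-1)}$, and $k<q-1$ is precisely the integrability of this factor near $s=0$. (Your choice of $(q/\alpha,p/\alpha)$ is the more natural space for $F=|u|^{\alpha-1}u$ given $u\in X$, and the paper's choice instead leaves open why $N(u)$ lies in $L^q_w((0,R),L^p)$; but that trade-off does not rescue your computation.) To complete your route you must either add the hypothesis $\tfrac{\alpha\beta}{\alpha-1}<1$ or replace the admissibility check by a genuine argument; as written, the decisive step is missing.
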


\begin{proof}
Existence of a unique solution is guaranteed by Theorem~\ref{T9.4}. Since $k<q-1$, we have for any $t_0>0, p>1$ that
$$
\int_0^{t_0}  \norm{\hat{p}_{t_0-s}(x-\cdot)}^{q'}_{L^{p'}(\mathbb{R}^n)} w^{-\frac{q'}{q}}(s)\, ds\lesssim 
\int_0^{t_0} (t_0-s)^{\frac{q'}{p}} s^{-\frac{k}{q-1}}\, ds<\infty.
$$
Hence the result follows from Theorems~\ref{T-1.3} and ~\ref{T-9.1}.

\end{proof}

\subsection*{Acknowledgement} This research of Anup Biswas was supported in part by a SwarnaJayanti fellowship SB/SJF/2020-21/03. The third author gratefully acknowledges financial support from SERB, Government of India (Project Code: 30120523), through A.B., and sincerely acknowledges the support provided by IISER Pune, Government of India.

\bigskip

%\noindent{\bf Acknowledgements:}

\bigskip

\bibliographystyle{plain}
\bibliography{bibliography.bib}
\end{document}